\documentclass[12pt]{amsart}

\usepackage[
text={440pt,575pt},
headheight=9pt,
centering
]{geometry}

\usepackage{amsmath, amssymb, amsthm, enumerate, bm}
\allowdisplaybreaks 
\usepackage{mathtools}

\usepackage{graphicx}
\usepackage{subcaption}
\usepackage[normalem]{ulem}

\usepackage[all]{xy}

\usepackage{multirow}

\usepackage{mathptmx}

\usepackage{xcolor}
\usepackage{hyperref}
\definecolor{darkblue}{RGB}{0,0,160}
\hypersetup{
	colorlinks,%
	citecolor=darkblue,%
	filecolor=black,%
	linkcolor=darkblue,%
	urlcolor=darkblue
}

\usepackage[capitalise]{cleveref}

\makeatletter
\newcommand{\nolisttopbreak}{\vspace{\topsep}\nobreak\@afterheading}
\makeatother

\theoremstyle{definition}
\newtheorem{definition}{Definition}[section]

\newtheorem{theorem}[definition]{Theorem}
\newtheorem{proposition}[definition]{Proposition}
\newtheorem{lemma}[definition]{Lemma}
\newtheorem{corollary}[definition]{Corollary}
\newtheorem{claim}{Claim}[definition]

\newtheorem*{fact*}{Fact}
\newtheorem{remark}[definition]{Remark}
\newtheorem{example}[definition]{Example}

\newtheorem{problem}[definition]{Problem}
\newtheorem{conjecture}[definition]{Conjecture}


\newcommand{\C}{\mathcal{C}}

\newcommand{\Hc}{\mathcal{H}}

\renewcommand{\L}{\mathfrak{L}}

\newcommand{\M}{\mathcal{M}}

\newcommand{\NN}{\mathbb{N}}

\newcommand{\OO}{\mathbb{O}}
\newcommand{\RR}{\mathbb{R}}

\newcommand{\ZZ}{\mathbb{Z}}

\newcommand\nub{{\boldsymbol 0}}
\newcommand\ab{{\boldsymbol a}}
\newcommand\bb{{\boldsymbol b}}
\newcommand\cb{{\boldsymbol c}}
\newcommand\db{{\boldsymbol d}}
\newcommand\eb{{\boldsymbol e}}
\newcommand\epb{{\bm \epsilon}}

\newcommand\ib{{\boldsymbol i}}

\newcommand\ub{{\boldsymbol u}}
\newcommand\vb{{\boldsymbol v}}
\newcommand\wb{{\boldsymbol w}}


\DeclareMathOperator{\mn}{mn}

\DeclareMathOperator{\Sym}{Sym}

\DeclareMathOperator{\supp}{supp}

\DeclareMathOperator{\cone}{cone}
\DeclareMathOperator{\conv}{conv}

\DeclareMathOperator{\lin}{lin}

\DeclareMathOperator{\U}{U}

\DeclareMathOperator{\ind}{ind}

\newcommand\defas{\coloneqq}

\makeatletter
\@namedef{subjclassname@2020}{%
	\textup{2020} Mathematics Subject Classification}
\makeatother

\title[Minkowski--Weyl theorem and Gordan's lemma up to symmetry]{
	On Minkowski--Weyl theorem and Gordan's lemma\\ up to symmetry
}

\author{Dinh Van Le}
\address{FPT University, Hanoi, Vietnam}
\email{dinhlv2@fe.edu.vn}

%
\subjclass[2020]{Primary: 05E18; Secondary: 52B99, 20M30, 90C05}
\keywords{cone, monoid, equivariant, symmetric group}

\begin{document}
   
   \begin{abstract} 
   	We investigate equivariant analogues of the Minkowski--Weyl theorem and Gordan's lemma in an infinite-dimensional setting, where cones and monoids are  invariant under the action of the infinite symmetric group. Building upon the framework developed in \cite{KLR,LR21}, we extend the theory beyond the nonnegative case. Our main contributions include a local equivariant Minkowski--Weyl theorem, local-global principles for equivariant finite generation and stabilization of symmetric cones, and a full proof of the equivariant Gordan's lemma conjectured in \cite{LR21}. We also classify non-pointed symmetric cones and non-positive symmetric normal monoids, addressing new challenges in the general setting.
   \end{abstract}
	\maketitle
	
	\section{Introduction}
	
	The  Minkowski--Weyl theorem and Gordan's lemma are foundational results in convex  geometry. The former establishes the duality between two representations of a polyhedral cone: as the conical hull of finitely many vectors, or as the intersection of finitely many linear halfspaces. The latter states that the lattice points within a rational polyhedral cone form an affine monoid. These results are not only cornerstones of polyhedral theory, but also underlie significant developments in areas such as linear and integer programming, invariant theory, toric geometry, algebraic statistics, and the theory of lattice polytopes; see, e.g., \cite{BG,CLS,DHK,Su} for details.
	
	In this paper, we investigate equivariant analogues of the Minkowski--Weyl theorem and Gordan's lemma in an infinite-dimensional setting, where cones and monoids are  invariant under the action of the infinite symmetric group $\Sym$ (see \Cref{section-cones-monoids} for precise definitions). Such symmetry arises naturally in various fields, including algebraic statistics \cite{AHT,AH07,HS12}, commutative algebra \cite{Dr14,DEF,LNNR1,LNNR2,NR17,NR19}, convex optimization \cite{LC}, group theory \cite{Co67}, machine learning \cite{LD,Pc}, and representation theory \cite{CEF,CF,SS14}. 
	
	A foundational framework for the study of $\Sym$-invariant cones and monoids was laid out in \cite{KLR}, with a primary focus on the nonnegative case. There, the authors considered \emph{global} cones $C \subseteq \RR^{(\NN)}_{\ge 0}$ and associated to them {chains} of \emph{local} cones $\C = (C_n)_{n \ge 1}$, where $C_n = C \cap \RR^n$. A key result in that setting was a \emph{local-global principle} which characterizes when a global cone is $\Sym$-equivariantly finitely generated in terms of its local truncations. An analogous principle was established for $\Sym$-invariant monoids in $\ZZ^{(\NN)}_{\ge 0}$.
	
	This line of work was extended in \cite{LR21}, where the authors pursued equivariant extensions of classical results in convex geometry (see \cite[Problem 1.1]{LR21}). In particular, they proposed the following conjectural analogue of Gordan's lemma (see \cite[Conjecture 1.4]{LR21}):
	
	\begin{conjecture}
		\label{cj:equi-Gordan}
		Let $C\subseteq\RR^{(\NN)}$ be a $\Sym$-equivariantly finitely generated rational cone. Then $M=C\cap\ZZ^{(\NN)}$ is a $\Sym$-equivariantly finitely generated normal monoid.
	\end{conjecture}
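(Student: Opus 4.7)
Normality of $M = C \cap \ZZ^{(\NN)}$ is automatic: any $x \in \gp(M)$ with $kx \in M$ for some $k \ge 1$ lies in the cone $C$ by positive scaling, and in $\ZZ^{(\NN)}$ by assumption, hence in $M$. The real content is $\Sym$-equivariant finite generation, for which I would invoke the local-global principle for equivariantly finitely generated monoids established earlier in the paper. That principle reduces the claim to producing, for each sufficiently large $n$, a finite Hilbert basis of $M_n \defas M \cap \ZZ^n = C_n \cap \ZZ^n$ whose $\Sym_{n+1}$-orbits already generate $M_{n+1}$. At each fixed $n$, since $C$ is $\Sym$-equivariantly generated by some finite rational set $S$, the local cone $C_n$ is a rational polyhedral cone, so classical Gordan's lemma provides a finite Hilbert basis of $M_n$ consisting of primitive lattice vectors on the extremal rays of $C_n$ together with lattice points in a fundamental parallelepiped of any spanning set.

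To control these Hilbert bases across levels, I would decompose $C$ via the local equivariant Minkowski--Weyl theorem into its lineality space $L = C \cap (-C)$ and a pointed part $C'$, both $\Sym$-invariant. The lineality contribution $L \cap \ZZ^{(\NN)}$ is a $\Sym$-invariant subgroup of $\ZZ^{(\NN)}$; its equivariant finite generation as a group, and hence as a monoid upon adjoining negatives, should follow from the classification of non-pointed symmetric cones announced in the abstract. The extremal rays of $C'$ form finitely many $\Sym$-orbits, again by the equivariant Minkowski--Weyl theorem, so the ray contributions to the local Hilbert bases are uniformly controlled by finitely many equivariant ray generators.

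The main obstacle will be the parallelepiped lattice points in the pointed part: in the nonnegative setting of \cite{KLR,LR21}, these are bounded using Dickson-type well-partial-order arguments on $\ZZ^{(\NN)}_{\ge 0}$, which do not extend verbatim when coordinates may be negative. The expected fix is to apply the classification of non-positive symmetric normal monoids (also promised in the abstract), decomposing $C' \cap \ZZ^{(\NN)}$ into $\Sym$-invariant pieces on which, after a suitable equivariant coordinate change, the nonnegative local-global principle yields a finite equivariant generating set. Combining the lineality, ray, and parallelepiped contributions then yields the desired finite $\Sym$-equivariant Hilbert basis of $M$.
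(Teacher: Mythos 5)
Your proposal contains two genuine gaps, and it misreads the role of the paper's auxiliary results.

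First, the decomposition $C = L + C'$ with a \emph{$\Sym$-invariant} pointed part $C'$ does not exist in general when $L = \lin(C) \ne \{0\}$. Take $C = \mathfrak{C}_3 = \{\ub : s(\ub) \ge 0\}$, for which $L = \mathfrak{C}_2$. A pointed complement would have to be a single $\Sym$-invariant ray $\RR_{\ge 0}\ub$, forcing $\ub$ to be a $\Sym$-fixed vector; but the only $\Sym$-fixed vector of finite support is $\nub$. The paper never performs such a decomposition. Instead it uses a dichotomy: if $M$ is non-positive (equivalently, by \Cref{l:positive-pointed}, if $C$ is non-pointed), then \Cref{c:non-positve-monoids} shows $M$ is literally one of four explicit monoids $\mathfrak{M}_1,\dots,\mathfrak{M}_4$, each visibly $\Sym$-equivariantly finitely generated, and the case is finished with no decomposition at all. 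Your intuition that the lineality piece is handled by the non-pointed/non-positive classification is right, but the classification actually determines \emph{all of} $M$ in that case, not just a summand.

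Second, your plan for the pointed case --- ``a suitable equivariant coordinate change'' to reduce to the nonnegative local-global principle, plus a claim that the extremal rays of $C'$ form finitely many $\Sym$-orbits ``by the equivariant Minkowski--Weyl theorem'' --- does not match what the paper's tools give you, and I don't see how to make either step work. \Cref{t:Minkowski-Weyl} describes generating sets of the \emph{dual} cones $C_n^*$ for a stabilizing chain; it says nothing about extremal rays of $C$ forming finitely many orbits, and indeed equivariant finite generation of the dual cone $C^*$ can fail (see the discussion around \cite[Theorem 4.9]{LR21}). There is also no equivariant linear change of coordinates taking a general pointed $\Sym$-invariant cone into $\RR^{(\NN)}_{\ge 0}$. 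The paper avoids this entirely: it proves a local-global principle (\Cref{c:finte-generation-monoid}) valid for chains of \emph{positive normal} affine monoids, not merely nonnegative ones. The key technical input is \Cref{l:support-reduction}, a combinatorial support-reduction lemma that handles mixed-sign coordinates directly, together with \Cref{t:stabilizing-cones}. Once you know $C$ pointed implies each $M_n = C_n \cap \ZZ^n$ is a positive normal affine monoid (classical Gordan), and the chain $\C$ stabilizes, \Cref{t:stabilizing-cones}(iii) feeds into \Cref{c:finte-generation-monoid}(iv) and you are done --- no parallelepiped analysis and no coordinate change are needed.
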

	
	This conjecture was verified for nonnegative cones in \cite[Theorem 6.1]{LR21}. Moreover, it was also shown that the dual cone of a $\Sym$-equivariantly finitely generated cone need not be $\Sym$-equivariantly finitely generated \cite[Theorem 4.9]{LR21}, indicating that a global equivariant analogue of the Minkowski--Weyl theorem does not hold in general. Nevertheless, a local version of this theorem was established for chains of nonnegative cones \cite[Theorem 4.4]{LR21}, providing an explicit description of the corresponding dual chains.
	
	 The goal of the present paper is to generalize the results of \cite{KLR,LR21} to arbitrary $\Sym$-invariant cones, thus removing the nonnegativity assumption and substantially broadening the scope of the theory. More precisely, we prove:
	 \begin{itemize} 
	 	\item 
	 	a local equivariant analogue of the Minkowski--Weyl theorem for any stabilizing $\Sym$-invariant chain of cones (see \Cref{t:Minkowski-Weyl}); 
	 	\item 
	 	local-global principles characterizing equivariant finite generation and stabilization of general $\Sym$-invariant cones (see \Cref{t:finte-generation,t:stabilizing-cones}); 
	 	\item 
	 	\Cref{cj:equi-Gordan} in its full generality (see \Cref{t:Gordan}).
	 \end{itemize}
	 	
	 Several new challenges arise in this more general setting. For instance, we do not yet know whether a local-global principle holds for all $\Sym$-invariant monoids (see \Cref{pb:local-global-monoids}). However, we are able to establish such a principle under the additional assumption that the monoid is both positive and normal (see \Cref{c:finte-generation-monoid}). Moreover, in contrast to the nonnegative case, a cone $C \subseteq \RR^{(\NN)}$ is not necessarily pointed, and thus the associated monoid $C \cap \ZZ^{(\NN)}$ may fail to be positive. This prevents a direct application of the aforementioned principle for positive normal monoids. To address this, we classify all non-pointed $\Sym$-invariant cones (see \Cref{t:non-pointed-cones}) and all non-positive $\Sym$-invariant normal monoids (see \Cref{c:non-positve-monoids}).
	 
	 The results obtained in this paper contribute to a broader program aimed at understanding algebraic and geometric structures in infinite-dimensional settings up to symmetry. They provide new techniques for analyzing symmetric cones and monoids, and offer a foundation for further developments in this direction.
	 
	 The remainder of the paper is structured as follows. \Cref{section-cones-monoids} provides  preliminaries on symmetric cones and monoids. \Cref{sec-Minkowski-Weyl} develops a duality theory for symmetric chains of cones and presents the proof of a local equivariant Minkowski--Weyl theorem. In \Cref{section-local-global}, we establish local-global principles for arbitrary symmetric cones. Classifications of non-pointed symmetric cones and non-positive symmetric normal monoids are given in \Cref{sec-non-pointed-cones}.
	 In \Cref{sec:equi-Gordan}, we verify \Cref{cj:equi-Gordan} and derive related results. Finally, in \Cref{sec:problems}, we propose two open problems concerning dual monoids and dual lattices, inspired by the duality theory for cones developed earlier.


\section{Symmetric cones and monoids}
\label{section-cones-monoids}

We begin by introducing the infinite-dimensional ambient space $\RR^{(\NN)}$, equipped with the natural action of the infinite symmetric group $\Sym$ via coordinate permutations. Our focus is on cones and monoids in this space that are invariant under the $\Sym$-action. These symmetric structures form the main objects of study in this paper.

Let $\NN$ denote the set of positive integers. Throughout the paper, we consider the infinite dimensional real vector space
\[
\RR^{(\NN)}=\bigoplus_\NN\RR,
\]
which has a basis indexed by $\NN$. Any element $\ub\in\RR^{(\NN)}$ can be expressed as $\ub=(u_i)_{i\in \NN}$, where each coordinate $u_i\in\RR$ and only finitely many of them are nonzero. We call
\[
\supp(\ub)=\{i\in\NN\colon u_i\ne 0\}
\]
the \emph{support} of $\ub$. The number $|\supp(\ub)|$ is referred to as the \emph{support size} of $\ub.$ 
Let $\{\eb_i\}_{i\in\NN}$ denote the standard basis of $\RR^{(\NN)}$, where $\eb_i\in\RR^{(\NN)}$ is the vector consisting of all zeroes except in the $i$-th coordinate, which is 1. 
For any $n\in\NN$, we regard $\RR^n$ as the subspace of $\RR^{(\NN)}$ spanned by $\eb_1,\dots,\eb_n$. That is, each element $(u_1,\dots,u_n)\in\RR^n$ is identified with the vector
$(u_1,\dots,u_n,0,0,\dots)\in\RR^{(\NN)}$, where all coordinates beyond the $n$-th position are zero.
Via this identification, $\RR^{(\NN)}$ is realized as the direct limit of the following ascending chain of finite dimensional subspaces:
\[
\RR\subset \RR^2\subset\cdots\subset\RR^n\subset\cdots.
\]

\subsection{Symmetric groups and their actions}
\label{sec:Sym}
For $n\in \NN$, let $\Sym(n)$ denote the symmetric group on $[n]\defas\{1,\dots,n\}$.  By identifying $\Sym(n)$ with the stabilizer subgroup of $n+1$ in $\Sym(n+1)$, we obtain an ascending chain of finite symmetric groups:
$$
\Sym(1)\subset \Sym(2)\subset\cdots\subset \Sym(n)\subset\cdots.
$$
The direct limit of this chain is the infinite symmetric group
\[
\Sym(\infty)=\bigcup_{n\geq 1} \Sym(n),
\]
which consists of all permutations of $\NN$ that fix all but finitely many elements. For brevity, we will henceforth refer to $\Sym(\infty)$ simply as $\Sym$.

The group $\Sym$ acts on $\RR^{(\NN)}$ by permuting coordinates. More specifically, for $\sigma\in\Sym$ and $\ub=(u_i)_{i\in\NN}=\sum_{i\in\NN}u_i\eb_i\in\RR^{(\NN)}$, the action is given by
\[
\sigma(\ub)
=(u_{\sigma^{-1}(1)},u_{\sigma^{-1}(2)},\dots)
=\sum_{i\in\NN}u_{\sigma^{-1}(i)}\eb_i
=\sum_{i\in\NN}u_i\eb_{\sigma(i)}.
\]
In particular, $\sigma(\eb_i)=\eb_{\sigma(i)}$ for all $i\in\NN$. Evidently, this action induces an action of $\Sym(n)$ on $\RR^{n}$ for each $n\ge1$. 

For a subset $A\subseteq\RR^{(\NN)}$ and a subgroup $\Gamma\subseteq\Sym$ denote 
\[
\Gamma(A)=\{\sigma(\ub)\mid \sigma\in \Gamma,\ \ub\in A\}.
\]
We say that $A$ is \emph{$\Gamma$-invariant} if $\Gamma(A)\subseteq A$.
In this paper, we mainly focus on the case $\Gamma=\Sym$ or $\Gamma=\Sym(n)$ for some $n\ge1$. An ascending chain
\[
A_1\subset A_2\subset\cdots\subset A_n\subset\cdots,
\]
where $A_n\subseteq\RR^n$ for all $n\ge1$, is called \emph{$\Sym$-invariant} if each $A_n$ is $\Sym(n)$-invariant. For such a chain, one checks readily that its direct limit $A=\bigcup_{n\geq 1} A_n$ is $\Sym$-invariant. Conversely, given a $\Sym$-invariant subset $A\subseteq\RR^{(\NN)}$, the ascending chain of its truncations:
\[
A\cap\RR\subset A\cap\RR^2\subset\cdots\subset A\cap\RR^n\subset\cdots
\]
is $\Sym$-invariant. In what follows, we typically use the term \emph{global}  to refer to a set, cone, or monoid 
$A\subseteq\RR^{(\NN)}$, while a set, cone, or monoid $A_n\subseteq\RR^n$ is referred to as \emph{local}.

\subsection{Symmetric cones}
Given a subset $A\subseteq\RR^{(\NN)}$, the \emph{cone generated by $A$} is defined as
\[
\cone(A) =
\RR_{\geq 0}A
\defas
\Big\{\sum_{i=1}^k\lambda_i\ab_i\mid\ab_i\in A,\
\lambda_i\in\RR_{\geq 0}, \  k\in\NN\Big\}.
\]
 This cone is said to be \emph{finitely generated} if $A$ is finite, and \emph{rational} if $A\subseteq\ZZ^{(\NN)}$. Observe that any finitely generated cone $C\subseteq\RR^{(\NN)}$ must be contained in $\RR^{n}$ for some $n\in\NN$.  The classical Minkowski--Weyl theorem states that every finitely generated cone $C\subseteq\RR^{n}$ can equivalently be represented as the intersection of finitely many linear halfspaces of $\RR^{n}$ (see, for example, \cite[Theorem 1.15]{BG}). The proof of this result, which utilizes Fourier--Motzkin elimination, further shows that when $C$ is rational, it is the intersection of linear \emph{rational} halfspaces, i.e., halfspaces defined by linear forms with integer coefficients.

Now suppose that $C\subseteq\RR^{(\NN)}$ is a $\Gamma$-invariant cone, where $\Gamma=\Sym$ or $\Gamma=\Sym(n)$ for some $n\ge1$. Then $C$  always has a \emph{$\Gamma$-equivariant generating set},  meaning a subset $A\subseteq\RR^{(\NN)}$ such that $\Gamma(A)$ generates $C$:
\[
C=\cone(\Gamma(A)) =
\Big\{\sum_{i=1}^k\lambda_i\sigma_i(\ab_i)\mid\ab_i\in A,\ \sigma_i\in \Gamma,\
\lambda_i\in\RR_{\geq 0}, \  k\in\NN\Big\}.
\]
For example, the cone $C$ itself  forms a (possibly infinite) $\Gamma$-equivariant generating set for $C$.
We say that $C$ is \emph{$\Gamma$-equivariantly finitely generated} if it admits a finite $\Gamma$-equivariant generating set. As we will see in \Cref{section-local-global}, the $\Sym$-equivariant finite generation of a $\Sym$-invariant global cone $C\subseteq\RR^{(\NN)}$ can be characterized in terms of any $\Sym$-invariant chain of local cones whose direct limit is $C$. To prepare for this, we introduce here the relevant terminology. Let $\C=(C_{n})_{n\geq 1}$ be a \emph{$\Sym$-invariant chain of cones}. This means that $\C$ is an ascending chain and each $C_n$ is a $\Sym(n)$-invariant cone in $\RR^n$. Consequently, we have the containment
\[
\cone(\Sym(n)(C_m))\subseteq C_n
\quad\text{for all }\ n\ge m\ge 1.
\]
The chain $\C$ is said to \emph{stabilize} if there exists $r\in\NN$ such that
\[
C_n=\cone(\Sym(n)(C_m))
\quad\text{for all }\ n\ge m\ge r.
\]
The \emph{stability index} of $\C$, denoted $\ind(\C)$, is the smallest such $r$.

\subsection{Symmetric monoids}
Monoids are discrete analogues of cones. A monoid $M\subseteq\ZZ^{(\NN)}$ is \emph{generated} by a set $A$, denoted $M=\mn(A)$, if
\[
M =
\ZZ_{\geq 0}A
\defas
\Big\{\sum_{i=1}^km_i\ab_i\mid\ab_i\in A,\
m_i\in\ZZ_{\geq 0}, \  k\in\NN\Big\}.
\]
This monoid is called \emph{affine} or \emph{finitely generated} if it has a finite generating set. 
The \emph{subgroup of units} of $M$ is given by
\[
\U(M)=\{\ub\in M\mid -\ub\in M\}.
\] 
We say that $M$ is \emph{positive} if $\U(M)=\{0\}$. This condition holds, for instance, when $M\subseteq\ZZ^{\NN}_{\ge0}$. If $M$ is a positive affine monoid, then it has a unique minimal generating set known as the \emph{Hilbert basis} of $M$, denoted $\Hc(M)$ (see, e.g., \cite[Definition 2.15]{BG}). Moreover, in this case, $\Hc(M)$ consists precisely of the \emph{irreducible} elements of $M$, i.e., those elements $\ub\in M\setminus\{\nub\}$ with the property that if $\ub=\vb+\wb$ for $\vb,\wb\in M$, then either $\vb=\nub$ or $\wb=\nub$.

The \emph{normalization} of $M$ is defined as
\[
\widehat{M}=\{\ub\in\ZZ^{(\NN)}\colon k\ub\in M\ \text{ for some }\ k\in\NN\}.
\]
We say that $M$ is \emph{normal} if $M=\widehat{M}.$ Normal monoids are closely related to cones. In fact, it is straightforward to see that $\widehat{M}=\cone(M)\cap\ZZ^{(\NN)}$, where $\cone(M)$ is the cone generated by $M$. Thus, if $M$ is normal, then it can be expressed as $M=\cone(M)\cap\ZZ^{(\NN)}$. Conversely, for any cone $C\subseteq\RR^{(\NN)}$, the intersection $C\cap\ZZ^{(\NN)}$ defines a normal monoid. In this setting, the classical Gordan's lemma asserts that if $C$ is a finitely generated rational cone, then the normal monoid $C\cap\ZZ^{(\NN)}$ is affine (see, e.g., \cite[Lemma 2.9]{BG}).

As with cones, one can define \emph{equivariant generating sets} for $\Sym(n)$- and $\Sym$-invariant monoids in $\ZZ^{(\NN)}$, along with the notion of \emph{stabilization} for $\Sym$-invariant chains of local monoids. Similarly, \emph{equivariant Hilbert bases} can be defined for $\Sym(n)$- and $\Sym$-invariant positive monoids. We omit the analogous details here. In \Cref{section-local-global}, we will provide a characterization of equivariant finite generation of $\Sym$-invariant normal monoids in terms of their associated $\Sym$-invariant chains of local monoids.

\section{Equivariant Minkowski--Weyl theorem}
\label{sec-Minkowski-Weyl}

An alternative formulation of the Minkowski--Weyl theorem states that the dual cone of any finitely generated cone in $\RR^{n}$ is also finitely generated. This section aims to extend this classical result to the equivariant setting. While a \emph{global} extension to arbitrary $\Sym$-invariant cones $C\subseteq\RR^{(\NN)}$ does not hold, we establish a \emph{local} extension that describes the dual cones of any stabilizing $\Sym$-invariant chain of local cones.

\subsection{Dual cones}
Let $\RR^{\NN}$ denote the Cartersian power of $\RR$ indexed by $\NN$. This vector space contains $\RR^{(\NN)}$ as a subspace, and it elements take the form $(v_i)_{i\in\NN}$, where $v_i\in\RR$ for all $i\in\NN$. We identify $\RR^{\NN}$ with the dual vector space of $\RR^{(\NN)}$ via the dual pairing
\begin{align*}
	\langle\cdot,\cdot\rangle:\RR^{(\NN)}\times\RR^\NN&\to\RR
\end{align*}
given by
\[
\langle\ub,\vb\rangle=\sum_{i\in \NN}u_iv_i
\ \text{ for any $\ub=(u_i)_{i\in\NN}\in\RR^{(\NN)}$ and $\vb=(v_i)_{i\in\NN}\in\RR^{\NN}$. }
\]
When restricted to $\RR^n\times\RR^n$, this pairing yields the standard identification of the dual space of $\RR^n$ with itself for every $n\in\NN$.

Given a cone $C\subseteq\RR^{(\NN)}$, its \emph{dual cone} is defined as
	\[
C^*=\{\vb\in \RR^{\NN}\mid \langle \ub,\vb\rangle\ge 0\
\text{ for all }\ \ub\in C\}.
\]
Similarly, for a fixed $n\in\NN$, the \emph{dual cone} of a cone $C_n\subseteq\RR^n$  is defined by
\[
C_n^*=\{\vb\in \RR^{n}\mid \langle \ub,\vb\rangle\ge 0\
\text{ for all }\ \ub\in C_n\}.
\]
Note that if $C_n\subseteq\RR^{n}$, then it is also a subset of $\RR^{m}$ for any $m\ge n$. Consequently, the definition of the dual cone of $C_n$ \emph{depends} on the ambient space in which it is considered. Nevertheless, this dependence causes no confusion in our later discussion because, for a chain of cones  $\C=(C_{n})_{n\geq 1}$, we always consider the dual cone of $C_n$ with respect to the embedding $C_n\subseteq\RR^n$.

If a cone $C_n\subseteq\RR^n$ is the intersection of finitely many halfspaces, then its dual cone $C_n^*$ is generated by the linear forms defining these halfspaces (see, e.g., \cite[Theorem 1.16]{BG}). This duality leads to the alternative formulation of the Minkowski--Weyl theorem mentioned at the begining of this section. We now turn to equivariant extensions of this result. Note that the dual of a $\Sym$-invariant (or $\Sym(n)$-invariant) cone is again $\Sym$-invariant (respectively, $\Sym(n)$-invariant); see \cite[Lemma 4.3]{LR21}.

\subsection{Equivariant Minkowski--Weyl theorem: global extension}

 Given a $\Sym$-invariant cone $C\subseteq\RR^{(\NN)}$, it is shown in \cite[Theorem 4.9]{LR21} that its dual cone $C^*$ need not be $\Sym$-equivariantly finitely generated, even when $C$ itself is. This demonstrates that the classical Minkowski--Weyl theorem does not have a direct global extension to the equivariant setting. However, when restricting the dual cone to the subspace $\RR^{(\NN)}$ of $\RR^{\NN}$, we obtain the following proposition.

\begin{proposition}
	\label{p:global-W-M}
Let $C\subseteq\RR^{(\NN)}$ be any $\Sym$-invariant cone. Then the restricted dual cone $C^*\cap \RR^{(\NN)}$ is one of the following:
$$\{\nub\}, \quad \RR_{\ge0}^{(\NN)}, \quad \RR_{\le0}^{(\NN)},\quad \RR^{(\NN)}.$$
 In particular, $C^*\cap \RR^{(\NN)}$ is always $\Sym$-equivariantly finitely generated.
\end{proposition}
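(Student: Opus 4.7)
Set $D \defas C^* \cap \RR^{(\NN)}$, which is a $\Sym$-invariant convex cone in $\RR^{(\NN)}$ by \cite[Lemma 4.3]{LR21}. The plan is to classify $D$ by splitting on the sign of the coordinate sum $s(\vb) \defas \sum_{i \in \NN} v_i$ of elements $\vb \in D$, with a symmetrization trick over $\Sym(n)$ doing most of the work.

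Suppose first that some $\vb \in D$ has $s(\vb) > 0$. For $n$ large enough to contain $\supp(\vb)$, the convex combination $\frac{1}{n!}\sum_{\sigma \in \Sym(n)} \sigma(\vb)$ lies in $D$, and a direct computation identifies it with $\frac{s(\vb)}{n}\sum_{i=1}^n \eb_i$. Rescaling (since $D$ is a cone) and applying $\Sym$-invariance, $\sum_{i \in I}\eb_i \in D$ for every finite $I \subseteq \NN$ of sufficiently large cardinality. For any $\ub \in C$ and any $j \in \supp(\ub)$, I would choose such an $I$ with $I \cap \supp(\ub) = \{j\}$, which is possible because $\supp(\ub)$ is finite; then $u_j = \langle \ub, \sum_{i \in I}\eb_i\rangle \ge 0$. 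This forces $C \subseteq \RR_{\ge 0}^{(\NN)}$, and dualizing gives $D \supseteq \RR_{\ge 0}^{(\NN)}$.

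Now either $D = \RR_{\ge 0}^{(\NN)}$, or there exists $\vb' \in D$ with a strictly negative coordinate $v'_{i_1} = -a$. In the latter case I would show $C = \{\nub\}$, and hence $D = \RR^{(\NN)}$, as follows. Given any $\ub \in C$ and any $j \in \supp(\ub)$, pick $\sigma \in \Sym$ sending $i_1$ to $j$ while mapping $\supp(\vb') \setminus \{i_1\}$ into the infinite complement $\NN \setminus \supp(\ub)$; then $\langle \ub, \sigma(\vb')\rangle = -a\, u_j \ge 0$ gives $u_j \le 0$, which combined with $u_j \ge 0$ forces $u_j = 0$ and contradicts $j \in \supp(\ub)$.

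The mirror argument for $s(\vb) < 0$ produces $D \in \{\RR_{\le 0}^{(\NN)}, \RR^{(\NN)}\}$. If instead every $\vb \in D$ has $s(\vb) = 0$, any nonzero $\vb \in D$ must have both a strictly positive and a strictly negative coordinate, and the placement argument above---applied once with each sign---forces $C = \{\nub\}$ and hence $D = \RR^{(\NN)}$, contradicting the standing assumption. Thus $D = \{\nub\}$ in this last case, completing the classification. The delicate point I expect to be central is the convex-but-not-linear nature of $D$: averaging over $\Sym(n)$ produces only ``uniform'' sums $\sum_{i \in I}\eb_i$, never individual basis vectors $\eb_i$, so the containment $\RR_{\ge 0}^{(\NN)} \subseteq D$ must be extracted indirectly, by first bounding $C$ from above and then dualizing, rather than by exhibiting generators of $D$ directly. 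Once the classification is in hand, the ``in particular'' claim is immediate, as each of the four cones is $\Sym$-equivariantly generated by at most two vectors (the empty set, $\{\eb_1\}$, $\{-\eb_1\}$, or $\{\eb_1,-\eb_1\}$ respectively).
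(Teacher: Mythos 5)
Your proof is correct, and it takes a genuinely different route from the paper. The paper first isolates a standalone lemma (\Cref{l:dual-orbit}) --- that nonnegativity of all orbit pairings $\langle\sigma(\ub),\tau(\vb)\rangle$ forces $\ub$ and $\vb$ into the same signed orthant $\RR_{\ge 0}^{(\NN)}$ or $\RR_{\le 0}^{(\NN)}$ --- via a two-permutation placement argument, and then proves the proposition by casing on $C$ (equal to $\{\nub\}$, contained in $\RR_{\ge 0}^{(\NN)}$, contained in $\RR_{\le 0}^{(\NN)}$, or having a mixed-sign element). You instead case on the sign of the coordinate sum $s(\vb)$ for $\vb \in D$, feed a $\Sym(n)$-averaging step (producing $\epb_n \in D$ when $s(\vb)>0$) into a one-permutation placement argument to pin down $C$, and then pass back to $D$. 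The averaging computation is one the paper itself uses later, in \Cref{c:entire-cone} inside \Cref{l:big-cone}, but not here. The underlying engine --- permutations that move a chosen coordinate of a dual vector onto, or off of, the support of a primal vector --- is shared by both arguments, so the proofs are close in spirit. What your version buys is a trivially exhaustive trichotomy ($s>0$, $s<0$, $s=0$), whereas the paper's three-way split on $C$ requires a small extra observation (using $\Sym$-invariance plus closure under addition) to see that $C \subseteq \RR_{\ge0}^{(\NN)}\cup\RR_{\le0}^{(\NN)}$ forces $C$ into one of the two orthants; what it costs is a longer path ($D \to C \to D$ rather than $C \to D$) and no reusable lemma factored out.
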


The proof of this result relies on the following lemma.

\begin{lemma}
	\label{l:dual-orbit}
	Let $\ub,\vb\in \RR^{(\NN)}\setminus\{\nub\}$ be such that
	\[
	\langle\sigma(\ub),\tau(\vb)\rangle\ge0
	\quad\text{for all } \sigma, \tau\in \Sym.
	\]
	Then both $\ub$ and $\vb$ must belong to either $\RR_{\ge0}^{(\NN)}$ or  $\RR_{\le0}^{(\NN)}$. 
\end{lemma}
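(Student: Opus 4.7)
My plan is to prove the contrapositive in a strong form: if $\ub$ has entries of both signs, then \emph{no} nonzero $\vb$ can satisfy the hypothesis with $\ub$. The symmetric statement (interchanging the roles of $(\ub,\sigma)$ and $(\vb,\tau)$) then forces $\vb$ to be sign-coherent as well, which together with $\ub\ne\nub$, $\vb\ne\nub$ gives the conclusion.

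So assume for contradiction that $u_{k_1}>0$ and $u_{k_2}<0$ for distinct indices $k_1,k_2$, and fix any $\vb\ne\nub$, so that $\supp(\vb)$ contains some index $j^{*}$. Between $k_1$ and $k_2$, choose the one (call it $k^{*}$) for which $u_{k^{*}}$ has the opposite sign from $v_{j^{*}}$; thus $u_{k^{*}}v_{j^{*}}<0$. I would then build $\sigma\in\Sym$ with the properties that $\sigma(k^{*})=j^{*}$ and $\sigma(\supp(\ub)\setminus\{k^{*}\})\subseteq\NN\setminus\supp(\vb)$. Setting $\tau=\id$, one computes
\[
\langle\sigma(\ub),\tau(\vb)\rangle=\sum_{i\in\NN}u_{\sigma^{-1}(i)}v_i,
\]
and only indices $i$ lying in $\sigma(\supp(\ub))\cap\supp(\vb)=\{j^{*}\}$ contribute a nonzero term. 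Hence the pairing equals $u_{k^{*}}v_{j^{*}}<0$, contradicting the hypothesis.

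The only genuinely combinatorial content is the existence of such a $\sigma$ as an element of $\Sym$ (i.e., a finitely supported permutation). This will be routine: $\supp(\ub)$ is finite and $\NN\setminus\supp(\vb)$ is cofinite, so one can injectively assign the finitely many elements of $\supp(\ub)\setminus\{k^{*}\}$ to distinct indices in $\NN\setminus(\supp(\vb)\cup\supp(\ub))$, and then complete the partial map to a finitely supported bijection of $\NN$ by a finite rearrangement on a sufficiently large initial segment. The argument is uniform in whether $k_1,k_2,j^{*}$ happen to coincide, since $\sigma$ is allowed to fix indices (the case $k^{*}=j^{*}$ is harmless).

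By this argument, $\ub$ cannot have entries of mixed sign; applying the same reasoning with the roles of $\ub$ and $\vb$ (and of $\sigma$ and $\tau$) exchanged, $\vb$ cannot have entries of mixed sign either. Thus each of $\ub,\vb$ lies in $\RR^{(\NN)}_{\ge 0}\cup\RR^{(\NN)}_{\le 0}$, which is the desired conclusion. I do not anticipate any substantive obstacle; the proof is essentially a one-term dot product computation, with the permutation-freedom of $\Sym$ and the infiniteness of the index set doing all the work.
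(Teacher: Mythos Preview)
Your argument is essentially the same as the paper's: isolate a single nonzero coordinate of $\vb$ and use a permutation to align it with a coordinate of $\ub$ of the opposite sign, pushing the rest of $\supp(\ub)$ off $\supp(\vb)$. The paper permutes both $\ub$ and $\vb$ (choosing $\sigma$ and two candidate $\tau$'s), while you permute only $\ub$ and set $\tau=\id$; this is a cosmetic difference.

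There is, however, a small gap at the end. The statement asserts that $\ub$ and $\vb$ lie in the \emph{same} orthant (both in $\RR^{(\NN)}_{\ge0}$ or both in $\RR^{(\NN)}_{\le0}$), not merely that each is sign-coherent. Your last sentence stops at ``each of $\ub,\vb$ lies in $\RR^{(\NN)}_{\ge0}\cup\RR^{(\NN)}_{\le0}$''. The missing step is immediate from your own machinery: if, say, $\ub\in\RR^{(\NN)}_{\ge0}\setminus\{\nub\}$ and $\vb\in\RR^{(\NN)}_{\le0}\setminus\{\nub\}$, pick $k^{*}$ with $u_{k^{*}}>0$ and $j^{*}$ with $v_{j^{*}}<0$, and run the same alignment to get $\langle\sigma(\ub),\vb\rangle=u_{k^{*}}v_{j^{*}}<0$. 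The paper handles this with the one-line observation that $\langle\ub,\vb\rangle\ge0$ forces the orthants to agree; you should add this.
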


\begin{proof}
	First, we show that $\ub$ belongs to either $\RR_{\ge0}^{(\NN)}$ or  $\RR_{\le0}^{(\NN)}$. Assume, for contradiction, that this is not the case. Then there exists $\sigma\in \Sym$ such that
	\[
	\sigma(\ub)=(a_1,a_2,\dots,a_n,0,0,\dots)
	\]
	for some $n\ge 2$, where $a_1>0$, $a_2<0$, and $a_i=0$ for all $i\ge n+1$. Since $\vb\ne\nub$, we can find $\tau_1,\tau_2\in \Sym$ such that
	\[
	\tau_1(\vb)=(b_1,0,\dots,0,b_{n+1},b_{n+2},\dots),\quad 
	\tau_2(\vb)=(0,b_1,0,\dots,0,b_{n+1},b_{n+2},\dots),
	\]
	where $b_1\ne0$ and $b_i=0$ for $i=3,\dots,n$. We then have
	\[
	\langle\sigma(\ub),\tau_1(\vb)\rangle=a_1b_1\ \text{ and }\
	\langle\sigma(\ub),\tau_2(\vb)\rangle=a_2b_1.
	\]
	Since $b_1 \ne 0$ and $a_1,a_2$ have opposite signs, one of these inner products must be negative, contradicting the assumption that all such inner products are nonnegative. This contradiction confirms that $\ub\in \RR_{\ge0}^{(\NN)}\cup\RR_{\le0}^{(\NN)}$. Applying the same argument to $\vb$ yields $\vb\in \RR_{\ge0}^{(\NN)}\cup\RR_{\le0}^{(\NN)}$. Finally, since $\langle \ub,\vb\rangle\ge0$, both $\ub$ and $\vb$ must belong to the same orthant.
\end{proof}

We are ready to prove \Cref{p:global-W-M}.

\begin{proof}[Proof of \Cref{p:global-W-M}]
It suffices to prove the first assertion since the cones $\RR_{\ge0}^{(\NN)},$ $\RR_{\le0}^{(\NN)}$, and $\RR^{(\NN)}$ are $\Sym$-equivariantly generated by $\eb_1$, $-\eb_1$, and $\pm \eb_1$, respectively. If $C=\{\nub\}$, then $C^*=\RR^{\NN}$, and hence $C^*\cap \RR^{(\NN)}=\RR^{(\NN)}$. Now assume that $C\ne\{\nub\}$. We consider the following cases:

\emph{Case 1}: $C\subseteq\RR_{\ge0}^{(\NN)}$. Then $\RR_{\ge0}^{\NN}\subseteq C^*$, so $\RR_{\ge0}^{(\NN)}\subseteq C^*\cap\RR^{(\NN)}$. On the other hand, it follows easily from \Cref{l:dual-orbit} that $C^*\cap\RR^{(\NN)}\subseteq\RR_{\ge0}^{(\NN)}$. Therefore, $C^*\cap\RR^{(\NN)}=\RR_{\ge0}^{(\NN)}$.

\emph{Case 2}: $C\subseteq\RR_{\le0}^{(\NN)}$. Arguing similarly to Case 1, we obtain $C^*\cap\RR^{(\NN)}=\RR_{\le0}^{(\NN)}$.

\emph{Case 3}: $C\nsubseteq\RR_{\ge0}^{(\NN)}\cup\RR_{\le0}^{(\NN)}$. In this case, there exists $\ub \in C$ with both positive and negative coordinates. Take any $\vb \in C^*\cap\RR^{(\NN)}$. Since $\langle \sigma(\ub), \tau(\vb) \rangle \ge 0$ for all $\sigma, \tau \in \Sym$, \Cref{l:dual-orbit} implies that $\vb=\nub$. Hence, $C^*\cap\RR^{(\NN)}=\{\nub\}$. This completes the proof.
\end{proof}

\subsection{Equivariant Minkowski--Weyl theorem: local version}

In view of \Cref{p:global-W-M}, although the restricted dual cone $C^*\cap \RR^{(\NN)}$ is $\Sym$-equivariantly finitely generated, it offers limited insight into the full dual cone $C^*$ (as well as the original cone $C$). To gain a better understanding of $C^*$, we consider the chain $\C^*=(C^*_{n})_{n\geq 1}$ of dual cones associated with a chain $\C=(C_{n})_{n\geq 1}$ of local cones derived from $C$. As will be shown in \Cref{t:finte-generation}, if the global cone $C$ is $\Sym$-equivariantly finitely generated, then the chain $\C$ stabilizes and eventually becomes  finitely generated. In this case, we are able to explicitly describe a generating set for $C^*_{n}$ when $n$ is large enough, thereby providing further information about the global dual cone $C^*$. This result may be viewed as a local equivariant extension of the Minkowski--Weyl theorem. To state it precisely, we first introduce some notation.

For $n\ge 1$, denote by $\OO(\RR^n)$ and $\OO^-(\RR^n)$ the subsets of $\RR^n$ consisting of non-decreasing and non-increasing vectors, respectively. That is,
\begin{align*}
\OO(\RR^n)&=\{(u_1,\dots,u_{n})\in \RR^{n}\mid u_1\le\dots\le u_{n}\},\\
\OO^-(\RR^n)&=\{(v_1,\dots,v_{n})\in \RR^{n}\mid v_1\ge\dots\ge v_{n}\}.
\end{align*}
More generally, for any subset $D\subseteq \RR^n$, we define $\OO(D)\defas\OO(\RR^n)\cap D.$ In particular, if $a,b\in\RR$ with $a\le b$, then $\OO([a,b]^n)$ represents the set of non-decreasing vectors in $\RR^n$ where each entry lies between $a$ and $b$ (inclusive). Let $I_n(a,b)$ be the subset  of $\OO([a,b]^n)$ consisting of vectors whose entries are either $a$ or $b$. For instance,
\begin{align*}
I_1(a,b)&=\{a, b\},\\
I_2(a,b)&=\{(a,a), (a,b), (b,b)\},\\
I_3(a,b)&=\{(a,a,a), (a,a,b), (a,b,b), (b,b,b)\}.
\end{align*}
It is important to note that if $a<b$, then the set $I_n(a,b)$ consists of exactly $n+1$ affinely independent elements.

Now, let $\ub=(u_1,\dots,u_{n})\in \OO(\RR^{n})$ and fix an index $i\in[n-1]$. For $m>n$, define $F_{i,m}(\ub)$ as the subset of $\OO(\RR^m)$ consisting of vectors obtained from $\ub$ by inserting elements of $I_{m - n}(u_i, u_{i + 1})$ between $u_i$ and $u_{i + 1}$:
\[
F_{i,m}(\ub)=\{(u_1,\dots,u_{i})\}\times I_{m-n}(u_i,u_{i+1})
    \times\{(u_{i+1},\dots,u_n)\}.
\] 
For example, if $\ub=(1,2,3,4)\in \OO(\RR^{4})$ and $i=2$, then
\begin{align*}
F_{2,5}(\ub)&=\{(1,2,2,3,4), (1,2,3,3,4)\},\\
F_{2,6}(\ub)&=\{(1,2,2,2,3,4), (1,2,2,3,3,4),(1,2,3,3,3,4)\},\\
F_{2,7}(\ub)&=\{(1,2,2,2,2,3,4), (1,2,2,2,3,3,4),(1,2,2,3,3,3,4),(1,2,3,3,3,3,4)\}.
\end{align*}

We are prepared to state a local equivariant extension of the Minkowski--Weyl theorem. The following result extends the case of chains of nonnegative cones established in \cite[Theorem 4.4]{LR21}. Recall that for a chain of cones  $\C=(C_{n})_{n\geq 1}$, the dual cone $C_n^*$ is always considered with respect to the embedding $C_n\subseteq\RR^n$.

\begin{theorem}
	\label{t:Minkowski-Weyl}
	Let $\C=(C_{n})_{n\geq 1}$ be a stabilizing $\Sym$-invariant chain of cones with stability index $r=\ind(\C).$ Denote by $\C^*=(C_{n}^*)_{n\geq 1}$ the corresponding chain of dual cones. Suppose $F_{2r}\subseteq \OO(\RR^{2r})$ is a generating set for $\OO(C_{2r}^*)$. Then for all $n>2r$, the set
    $$F_n\defas\bigcup_{\ub\in F_{2r}} F_{r,n}(\ub)$$
	is a $\Sym(n)$-equivariant generating set for $C_n^*$.
\end{theorem}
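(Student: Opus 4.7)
The plan is to prove the equality $C_n^*=\cone(\Sym(n)(F_n))$ by establishing both inclusions. Since $C_n^*$ is $\Sym(n)$-invariant and every vector in $\RR^n$ is $\Sym(n)$-equivalent to a sorted element of $\OO(\RR^n)$, both directions can be reduced to statements about $\OO(\RR^n)$.

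For $C_n^*\subseteq\cone(\Sym(n)(F_n))$, the main task is to show $\OO(C_n^*)\subseteq\cone(F_n)$. I would first define the compression map
\[
\pi_n\colon\OO(\RR^n)\to\OO(\RR^{2r}),\qquad (v_1,\dots,v_n)\mapsto (v_1,\dots,v_r,v_{n-r+1},\dots,v_n),
\]
and verify $\pi_n(\OO(C_n^*))\subseteq\OO(C_{2r}^*)$: for any $\xb=(x_1,\dots,x_{2r})\in C_{2r}$, the padded vector $(x_1,\dots,x_r,0,\dots,0,x_{r+1},\dots,x_{2r})\in\RR^n$ lies in $C_n$ (it is a $\Sym(n)$-translate of $(\xb,0,\dots,0)\in C_{2r}\subseteq C_n$), so pairing against $\vb\in C_n^*$ yields $\langle\pi_n(\vb),\xb\rangle\geq 0$. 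Given $\vb\in\OO(C_n^*)$, write $\pi_n(\vb)=\sum_k\lambda_k\ub^{(k)}$ with $\ub^{(k)}=(u_1^{(k)},\dots,u_{2r}^{(k)})\in F_{2r}$ and $\lambda_k\geq 0$. To lift back to $\vb$, split each $\lambda_k$ into
\[
\lambda_{k,s}\defas \lambda_k\cdot\frac{v_{r+s+1}-v_{r+s}}{v_{n-r+1}-v_r}\qquad(s=0,1,\dots,n-2r),
\]
with the obvious convention when $v_r=v_{n-r+1}$, and let $\wb^{(k,s)}\in F_{r,n}(\ub^{(k)})$ be the lift whose middle block has $s$ copies of $u_r^{(k)}$ followed by $n-2r-s$ copies of $u_{r+1}^{(k)}$. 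A coordinate-by-coordinate check — immediate on the first $r$ and last $r$ positions from $\sum_s\lambda_{k,s}=\lambda_k$, and a telescoping sum on the middle positions — yields $\vb=\sum_{k,s}\lambda_{k,s}\wb^{(k,s)}\in\cone(F_n)$.

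For the reverse inclusion $\cone(\Sym(n)(F_n))\subseteq C_n^*$, the $\Sym(n)$-invariance of $C_n^*$ reduces the problem to showing $F_n\subseteq C_n^*$. Using the stabilization $C_n=\cone(\Sym(n)(C_r))$, an element of $\RR^n$ lies in $C_n^*$ iff all its sorted $r$-subvectors lie in $C_r^*$. For $\wb\in F_{r,n}(\ub)$, an $r$-subvector using at most one copy each of $u_r$ and $u_{r+1}$ is an $r$-subvector of $\ub$'s multiset, which lies in $C_r^*$ via the chain containment $C_r\subseteq C_{2r}$ applied to a permutation of $\ub\in C_{2r}^*$. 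The hard case — which I expect to be the main obstacle — is the $r$-subvectors containing $\alpha\geq 2$ copies of $u_r$ or $\beta\geq 2$ copies of $u_{r+1}$. My plan is to realize each such ``duplicated'' target as the first $r$ coordinates of a suitable vector in $\cone(\Sym(2r)(\ub))\subseteq C_{2r}^*$, produced by Reynolds-type averaging over $\Sym(2r)$-translates of $\ub$ (e.g., combining $\ub$ with transpositions moving $u_r$ and $u_{r+1}$ to symmetric positions), and then to invoke the chain containment $C_r\subseteq C_{2r}$ again to conclude that the subvector lies in $C_r^*$.
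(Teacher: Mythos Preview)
Your argument for the inclusion $\OO(C_n^*)\subseteq\cone(F_n)$ is correct and matches the paper's proof: the compression map $\pi_n$ is exactly the repeated application of \Cref{l:dual-cone}, and your explicit barycentric splitting of the $\lambda_k$ is a hands-on version of the paper's \Cref{l:simplex}--\Cref{l:Fim}.

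The gap is in the other inclusion. Your reduction of $F_n\subseteq C_n^*$ to the statement ``every sorted $r$-subvector of $\wb\in F_{r,n}(\ub)$ lies in $C_r^*$'' is valid, and the case $\alpha,\beta\le 1$ is fine. But for the duplicated case your plan---realize the target $r$-tuple as the first $r$ coordinates of some element of $\cone(\Sym(2r)(\ub))$ via ``Reynolds-type averaging''---is not a proof, and it is not clear that naive averaging produces the needed vector. Concretely, for $r=3$ and $\ub=(u_1,\dots,u_6)$, you would need to exhibit $(u_1,u_3,u_3)$ as the first three coordinates of a conical combination of permutations of $\ub$; since the first coordinate must equal $u_1$, when $u_1\ne 0$ you are forced into \emph{convex} combinations of $(u_1,u_i,u_j)$ with $i\ne j\in\{2,\dots,6\}$, and then you must show $(u_3,u_3)$ lies in the convex hull of $\{(u_i,u_j):i\ne j\}$. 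This turns out to be true, but it requires a separate convex-geometry argument for each pattern of duplications, and nothing you wrote covers it. The difficulty you flagged as ``the main obstacle'' is exactly where the real content lies, and averaging over transpositions does not obviously resolve it.

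The paper sidesteps this entirely with a different idea: instead of analyzing subvectors of $\wb$, it pads each generator $\xb\in\OO^-(C_r)$ with $r$ zeros inserted at the sign change, obtaining $\tilde\xb\in\OO^-(\RR^{2r})\cap C_{2r}$ with $\tilde x_r\ge 0\ge \tilde x_{r+1}$. For any $\wb\in F_{r,n}(\ub)\subseteq\OO(\RR^n)$, pad $\tilde\xb$ further with zeros to get $\cb\in\OO^-(\RR^n)$; then $\langle\cb,\wb\rangle=\langle\tilde\xb,\ub\rangle\ge 0$, and the rearrangement inequality gives $\langle\sigma(\xb),\wb\rangle=\langle\sigma'(\cb),\wb\rangle\ge\langle\cb,\wb\rangle\ge 0$ for every $\sigma$. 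This is \Cref{l:padding} in the paper, and it is the missing ingredient in your approach.
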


We illustrate this result with the following example.

\begin{example}
	\label{e:dual-cone}
	Consider a $\Sym$-invariant chain $\C=(C_{n})_{n\geq 1}$ with stability index $\ind(\C)=3$, where the cone $C_3$ is $\Sym(3)$-equivariantly generated by the two vectors:
	\[
	(-2,-1,4)\quad\text{and}\quad (-3,1,3).
	\]
	Using Normaliz \cite{BISV} and Macaulay2 \cite{GS}, we find the following generating set for $\OO(C_{6}^*)$:
	\[
	F_6=\{(1^{(6)}),\ (5,6^{(4)},7), \  (3^{(5)},4),\ (3^{(4)},4^{(2)}),\ (3^{(3)},4^{(3)}),\ (3^{(2)},4^{(4)}),\ (3,4^{(5)})\},
	\]
	where the notation $a^{(k)}$ indicates that the entry $a$ is repeated $k$ times. By \Cref{t:Minkowski-Weyl}, for all $n \geq 7$, the cone $C_{n}^*$ is $\Sym(n)$-equivariantly generated by the set
	\[
	F_n=\bigcup_{\ub\in F_{6}} F_{3,n}(\ub)
	=\{(1^{(n)}),\ (5,6^{(n-2)},7)\}\cup \{(3^{(i)},4^{(n-i)})\mid i\in[n-1]\}.
	\]
	It is not hard to show that $(1^{(n)})$ is a redundant generator of $C_{n}^*$ (see \Cref{c:entire-cone}). Therefore, for all $n \geq 7$, $C_{n}^*$ admits the following $\Sym(n)$-equivariant generating set:
	\[
	F_n'
	=\{(5,6^{(n-2)},7)\}\cup \{(3^{(i)},4^{(n-i)})\mid i\in[n-1]\}.
	\]
\end{example}

\begin{remark}
	\label{r:M-W}
	Keep the notation of \Cref{t:Minkowski-Weyl}.
	\begin{enumerate}
		\item 
		For any $\Sym(n)$-invariant cone $D_{n}\subseteq\RR^n$, we have $D_{n}=\Sym(n)(\OO(D_{n}))$. This implies that any generating set for $\OO(D_{n})$ is a $\Sym(n)$-equivariant generating set for $D_n$. Thus, in particular, $F_{2r}$ is a $\Sym(2r)$-equivariant generating set for $C_{2r}^*$. On the other hand, a $\Sym(n)$-equivariant generating set for $D_{n}$ contained in $\OO(D_{n})$ need not generate $\OO(D_{n})$. For instance, consider the cone $D_3=\cone(\Sym(3)(\ub))$ with $\ub=(1,2,3)$. Then $\{\ub\}$ is a $\Sym(3)$-equivariant generating set for $D_{3}$, but it is not a generating set for $\OO(D_{3})$, since $$(1,1,1)=\frac{1}{4}\big((1,2,3)+(3,2,1)\big)\in\OO(D_{3})\setminus\cone(\ub).$$
		\item 
		If the chain $\C$ consists of finitely generated cones, then by the Minkowski--Weyl theorem, the dual chain $\C^*$ also consists of finitely generated cones. In particular, it follows from \cite[Lemma 4.6]{LR21} that the cone $\OO(C_{2r}^*)$ is finitely generated. Thus, $F_{2r}$ can be chosen to be finite, and \Cref{t:Minkowski-Weyl} provides a finite $\Sym(n)$-equivariant generating set for $C_n^*$ for all $n>2r$. 
		\item 
		\Cref{t:Minkowski-Weyl} does not fully generalize \cite[Theorem 4.4]{LR21}. In the case of nonnegative cones, the latter yields a $\Sym(n)$-equivariant generating set for $C_n^*$ for all $n>r$, whereas the former only guarantees such a set for $n>2r$. While a full generalization of \cite[Theorem 4.4]{LR21} is indeed possible (see  \Cref{r:W-M-improved} below), we have opted for the current version of \Cref{t:Minkowski-Weyl} for the sake of simplicity and conciseness.
	\end{enumerate}
\end{remark}

\enlargethispage{1.2\baselineskip}

The proof of \Cref{t:Minkowski-Weyl} requires some preparations. Recall that the \emph{convex hull} of a subset $A\subseteq\RR^{(\NN)}$ is given by
\[
\conv(A)=
\Big\{\sum_{i=1}^k\lambda_i\ab_i\mid\ab_i\in A,\
\lambda_i\geq 0, \  \sum_{i=1}^k\lambda_i=1,\ k\in\NN\Big\}.
\]
Clearly, $\conv(A)\subseteq\cone(A).$

\begin{lemma}
\label{l:simplex}
    Let $a,b\in\RR$ with $a\le b$. Then $\OO([a,b]^n)=\conv(I_n(a,b))$.
\end{lemma}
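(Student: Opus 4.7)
The plan is to prove the two set inclusions separately. The inclusion $\conv(I_n(a,b)) \subseteq \OO([a,b]^n)$ is essentially immediate: every vector in $I_n(a,b)$ is by definition non-decreasing with entries in $[a,b]$, and both properties — being non-decreasing and having entries in $[a,b]$ — are preserved under convex combinations. So the set on the right is convex and contains $I_n(a,b)$, forcing this inclusion.

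The substantive direction is $\OO([a,b]^n) \subseteq \conv(I_n(a,b))$. The case $a=b$ is trivial since both sides equal $\{(a,\dots,a)\}$, so I would assume $a<b$. By the affine change of variables $u_i \mapsto (u_i - a)/(b-a)$, which carries $I_n(a,b)$ bijectively to $I_n(0,1)$ and $\OO([a,b]^n)$ to $\OO([0,1]^n)$ while preserving convex combinations, it suffices to prove the inclusion for $a=0$, $b=1$.

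Given $\ub = (u_1,\dots,u_n) \in \OO([0,1]^n)$, so $0 \le u_1 \le \cdots \le u_n \le 1$, I would write down an explicit convex combination. Label the elements of $I_n(0,1)$ as $\vb_k = (0^{(n-k)}, 1^{(k)})$ for $k=0,1,\dots,n$, set $u_0 \defas 0$, and define
\[
\lambda_0 \defas 1 - u_n, \qquad \lambda_k \defas u_{n-k+1} - u_{n-k} \ \text{ for } k=1,\dots,n.
\]
Nonnegativity of the $\lambda_k$ is immediate from $u_0 \le u_1 \le \cdots \le u_n \le 1$, and a telescoping sum gives $\sum_{k=0}^n \lambda_k = 1$. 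For the $i$-th coordinate of $\sum_{k=0}^n \lambda_k \vb_k$, note that $\vb_k$ has a $1$ in position $i$ precisely when $k \ge n - i + 1$, so
\[
\sum_{k=0}^n \lambda_k (\vb_k)_i = \sum_{k=n-i+1}^n \lambda_k = \sum_{k=n-i+1}^n (u_{n-k+1} - u_{n-k}) = u_i - u_0 = u_i,
\]
which is exactly what we wanted. This proves $\ub \in \conv(I_n(0,1))$.

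There is no real obstacle here; the only mildly creative step is guessing the "staircase" coefficients $\lambda_k = u_{n-k+1} - u_{n-k}$, which is motivated by observing that the partial sums $s_i \defas \sum_{k=n-i+1}^n \lambda_k$ must equal $u_i$ and must be non-decreasing in $i$, matching the monotonicity of $\ub$. An alternative proof would identify $\OO([a,b]^n)$ as a polytope cut out by the inequalities $a \le u_1, \, u_i \le u_{i+1}, \, u_n \le b$ and check that its vertices are exactly the elements of $I_n(a,b)$ (since at a vertex one needs $n$ tight inequalities, forcing a block of $a$'s followed by a block of $b$'s), but the direct construction above is shorter and yields explicit convex coefficients.
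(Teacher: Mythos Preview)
Your proof is correct, but it proceeds along a genuinely different route from the paper's. You write down an explicit convex combination of the vertices $\vb_k$ realizing any given $\ub\in\OO([0,1]^n)$ via the telescoping ``staircase'' coefficients $\lambda_k=u_{n-k+1}-u_{n-k}$. The paper instead argues geometrically: since for $a<b$ the set $I_n(a,b)$ is affinely independent of cardinality $n+1$, $\conv(I_n(a,b))$ is an $n$-simplex, hence the intersection of exactly $n+1$ facet half-spaces, which are identified as $a\le x_1,\ x_1\le x_2,\dots,\ x_{n-1}\le x_n,\ x_n\le b$; any point of $\OO([a,b]^n)$ satisfies all of these. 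Your approach is more elementary and self-contained (no appeal to the half-space description of a simplex) and yields explicit barycentric coordinates, which could be useful if one ever needs them quantitatively; the paper's approach is shorter and more conceptual, and is in fact the alternative you sketch in your final paragraph.
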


\begin{proof}
    Let $P=\conv(I_n(a,b))$. It suffices to show that $\OO([a,b]^n)\subseteq P$, as the reverse inclusion is straightforward. The statement is trivially true when $a=b$, since both sets reduce to the singleton $\{(a,\dots,a)\}$. Now, suppose that $a<b$. Then $I_n(a,b)$ is an affinely independent set with $n+1$ elements, so $P$ is an $n$-dimensional simplex. It follows that $P$ is the intersection of $n+1$ half-spaces, each bounded by a supporting hyperplane that passes through exactly $n$ elements of $I_n(a,b)$. In other words, $P$ is defined by the following inequalities:
    \[
    a\le x_1,\ x_1\le x_2,\dots, x_{n-1}\le x_n,\ x_n\le b.
    \]
    Evidently, any element of $\OO([a,b]^n)$ satisfies all these inequalities. Therefore, $\OO([a,b]^n)\subseteq P$, as required.
\end{proof}

Let $\ub\in\OO(\RR^n)$. For $i\in [n-1]$  and $m>n$, denote by $\OO_{i,m}(\ub)$ the subset of $\OO(\RR^m)$ obtained from $\ub$ by inserting elements of $\OO([u_i,u_{i+1}]^{m-n})$ between $u_i$ and $u_{i+1}$, that is,
\begin{align*}
	\OO_{i,m}(\ub)
	&= \{(u_1,\dots,u_{i})\}\times \OO([u_i,u_{i+1}]^{m-n})
	\times\{(u_{i+1},\dots,u_n)\}.
\end{align*}

\begin{lemma}
	\label{l:convex-hull}
	Let $\ub\in\OO(\RR^n)$. Then for any $i\in [n-1]$  and $m>n$, we have
	$$
	\OO_{i,m}(\ub)
	=\conv(F_{i,m}(\ub)).
	$$
\end{lemma}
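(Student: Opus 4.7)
The plan is to reduce the statement directly to the preceding \Cref{l:simplex} by exploiting the common product structure of both sets. Consider the affine map $T\colon \RR^{m-n}\to\RR^m$ defined by
\[
T(\vb)=((u_1,\dots,u_i),\vb,(u_{i+1},\dots,u_n)),
\]
that is, $T$ prepends the block $(u_1,\dots,u_i)$ and appends the block $(u_{i+1},\dots,u_n)$. Directly from the definitions of $\OO_{i,m}(\ub)$ and $F_{i,m}(\ub)$ given just before the lemma,
\[
\OO_{i,m}(\ub)=T\bigl(\OO([u_i,u_{i+1}]^{m-n})\bigr)\quad\text{and}\quad F_{i,m}(\ub)=T\bigl(I_{m-n}(u_i,u_{i+1})\bigr).
\]

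The key general fact I will invoke is that convex hulls commute with affine maps, i.e., $\conv(T(S))=T(\conv(S))$ for any subset $S$ of the domain. This is immediate from the formula for $\conv$: $T$ preserves convex combinations, because the condition $\sum_j\lambda_j=1$ guarantees that the fixed outer blocks $(u_1,\dots,u_i)$ and $(u_{i+1},\dots,u_n)$ reassemble correctly (i.e., are not rescaled) when one pushes $\sum_j\lambda_j\vb_j$ through $T$. Both inclusions $\conv(T(S))\subseteq T(\conv(S))$ and $T(\conv(S))\subseteq\conv(T(S))$ then drop out of this identity. Combining this observation with \Cref{l:simplex}, applied with $a=u_i$ and $b=u_{i+1}$, yields
\[
\conv(F_{i,m}(\ub))=\conv\bigl(T(I_{m-n}(u_i,u_{i+1}))\bigr)=T\bigl(\conv(I_{m-n}(u_i,u_{i+1}))\bigr)=T\bigl(\OO([u_i,u_{i+1}]^{m-n})\bigr)=\OO_{i,m}(\ub),
\]
which is the desired equality.

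There is no real obstacle in this argument; the whole proof is a short consequence of \Cref{l:simplex} once the shared product structure is unwound, and the verification that $T$ commutes with $\conv$ is routine. If one prefers to avoid the abstract statement about affine maps, the same reasoning can be spelled out by hand: the inclusion $\supseteq$ uses that $F_{i,m}(\ub)\subseteq\OO_{i,m}(\ub)$ and that $\OO_{i,m}(\ub)$, being the product of singletons with a convex set, is convex; the inclusion $\subseteq$ follows by writing any middle block $(v_1,\dots,v_{m-n})\in\OO([u_i,u_{i+1}]^{m-n})$ as a convex combination of elements of $I_{m-n}(u_i,u_{i+1})$ via \Cref{l:simplex} and then applying the same coefficients to the full vectors in $F_{i,m}(\ub)$.
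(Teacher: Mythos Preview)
Your proof is correct and follows essentially the same approach as the paper: both reduce to \Cref{l:simplex} and use that the product with the fixed singleton blocks (equivalently, your affine map $T$) commutes with taking convex hulls. The paper's version is simply a terser one-line application of this observation.
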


\begin{proof}
	By \Cref{l:simplex}, we know that $\OO([u_i,u_{i+1}]^{m-n})=\conv(I_{m-n}(u_i,u_{i+1}))$. Hence,
	$$
		\OO_{i,m}(\ub)
		=\{(u_1,\dots,u_{i})\}\times \conv(I_{m-n}(u_i,u_{i+1}))
		\times\{(u_{i+1},\dots,u_n)\}
		=\conv(F_{i,m}(\ub)),
	$$
	as claimed.
\end{proof}

Recall that the \emph{Minkowski sum} of subsets $A_1,\dots,A_s\subseteq\RR^{(\NN)}$ with weights $\lambda_1,\dots,\lambda_s\in\RR$ is defined as
\[
\sum_{j=1}^s\lambda_jA_j=\Big\{\sum_{j=1}^s\lambda_j\ab_j\mid \ab_j\in A_j \text{ for } j=1,\dots,s\Big\}
\subseteq\RR^{(\NN)}.
\]
Obviously, if $\lambda_j\ge 0$ for all $j\in[s]$, then $\sum_{j=1}^s\lambda_jA_j\subseteq\cone\big(\bigcup_{j=1}^sA_j\big)$.

\begin{lemma}
	\label{l:Fim}
	Suppose $\ub\in\cone(A)$ for some $A\subseteq \RR^n$. Then for any $i\in [n-1]$  and $m>n$, it holds that
	$$
	F_{i,m}(\ub)\subseteq
	\cone\Big(\bigcup_{\ab\in A}F_{i,m}(\ab)\Big).
	$$
\end{lemma}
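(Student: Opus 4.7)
The plan is to promote any given conic decomposition of $\ub$ to a simultaneous conic decomposition of each element of $F_{i,m}(\ub)$, by mirroring the same ``insertion pattern'' on every summand. Concretely, fix a representation $\ub=\sum_{j=1}^k\lambda_j\ab_j$ with $\lambda_j\geq 0$ and $\ab_j=(a_{j,1},\dots,a_{j,n})\in A$. Comparing coordinates gives $u_i=\sum_{j=1}^k\lambda_j a_{j,i}$ and $u_{i+1}=\sum_{j=1}^k\lambda_j a_{j,i+1}$; these are the only two coordinates of $\ub$ that govern the insertion step defining $F_{i,m}$.

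Next, I would observe that, since $I_{m-n}(u_i,u_{i+1})$ consists precisely of the non-decreasing $\{u_i,u_{i+1}\}$-sequences of length $m-n$, every $\vb\in F_{i,m}(\ub)$ has the form
$$
\vb=\bigl(u_1,\dots,u_i,\, \underbrace{u_i,\dots,u_i}_{t},\, \underbrace{u_{i+1},\dots,u_{i+1}}_{m-n-t},\, u_{i+1},\dots,u_n\bigr)
$$
for a uniquely determined $t\in\{0,1,\dots,m-n\}$. For each $j\in[k]$, I would define the parallel vector $\vb_j\in F_{i,m}(\ab_j)$ by inserting $t$ copies of $a_{j,i}$ followed by $m-n-t$ copies of $a_{j,i+1}$ between the $i$-th and $(i+1)$-st entries of $\ab_j$, using the \emph{same} integer $t$. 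A direct coordinate-by-coordinate check then yields $\vb=\sum_{j=1}^k\lambda_j\vb_j$: outside the inserted block the identity reduces to $u_\ell=\sum_j\lambda_j a_{j,\ell}$, while the first $t$ inserted slots produce $\sum_j\lambda_j a_{j,i}=u_i$ and the remaining $m-n-t$ slots produce $\sum_j\lambda_j a_{j,i+1}=u_{i+1}$. This exhibits $\vb$ as a conic combination of elements of $\bigcup_{\ab\in A}F_{i,m}(\ab)$, proving the inclusion.

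No substantial obstacle is anticipated: the entire argument rests on the observation that the integer $t$ is determined by $\vb$ alone and is independent of any chosen decomposition of $\ub$, so it can be transferred uniformly to every summand $\ab_j$. One minor bookkeeping point is that $F_{i,m}(\ab_j)$ is only meaningful when $a_{j,i}\leq a_{j,i+1}$; this is implicit in the intended reading $A\subseteq\OO(\RR^n)$, which is the setting in which the lemma is applied in the proof of \Cref{t:Minkowski-Weyl}.
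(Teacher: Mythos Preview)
Your proof is correct and is essentially a fully spelled-out version of the paper's own argument: the paper writes $\ub=\sum_{j}\lambda_j\ab_j$ and asserts (without details) that $F_{i,m}(\ub)\subseteq\sum_j\lambda_j F_{i,m}(\ab_j)$ as a Minkowski sum, which is exactly the coordinate identity you verify explicitly by transferring the same insertion parameter $t$ to each summand. The only minor inaccuracy is that $t$ need not be \emph{uniquely} determined when $u_i=u_{i+1}$, but this is harmless since any choice of $t$ works; your observation about the implicit hypothesis $A\subseteq\OO(\RR^n)$ is also apt.
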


\begin{proof}
We can write $\ub=\sum_{j=1}^s\lambda_j\ab_j$ with $\ab_j\in A$ and $\lambda_j\ge0$ for all $j\in[s]$. From the definition of $F_{i,m}(\ub)$, it is easy to check that
$
F_{i,m}(\ub)\subseteq
\sum_{j=1}^s\lambda_j F_{i,m}(\ab_j).
$
Hence, 
\[F_{i,m}(\ub)\subseteq
\cone\Big(\bigcup_{j=1}^sF_{i,m}(\ab_j)\Big)\subseteq
\cone\Big(\bigcup_{\ab\in A}F_{i,m}(\ab)\Big).
\hfil
\qedhere
\]
\end{proof}

For the proof of  the next lemma, we will employ the classical \emph{rearrangement inequality} \cite[Theorem 368]{HLP}, which states that for any two non-decreasing sequences of real numbers $a_1\le\cdots\le a_n$, $b_1\le\cdots\le b_n$, and for any permutation $\sigma\in\Sym(n)$, the following inequality holds:
\[
a_{\sigma(1)}b_1+\cdots+ a_{\sigma(n)}b_n
\ge
a_nb_1+\cdots+ a_1b_n.
\]
In other words, if $\ab\in\OO^-(\RR^n)$ and $\bb\in\OO(\RR^n)$, then for any $\sigma\in\Sym(n)$, we have
\[
\langle \sigma(\ab),\bb\rangle
\ge
\langle \ab,\bb\rangle.
\]

\begin{lemma}
\label{l:padding}
    Let $\ab\in\OO^-(\RR^n)$ and $\bb\in\OO(\RR^n)$ with $\langle \ab,\bb\rangle\ge 0$. Suppose there exists an index $i\in [n-1]$ such that $a_i\ge0\ge a_{i+1}$. Then for any $m>n$, we have
        \[
        \langle \sigma(\ab),\db\rangle\ge 0
        \ \text{  for all $\db\in \OO_{i,m}(\bb)$ and $\sigma\in\Sym(m)$.}
        \]
\end{lemma}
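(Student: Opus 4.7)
The plan is to reduce the desired inequality to the rearrangement inequality already quoted in the section. Although $\ab$ is non-increasing only on its first $n$ coordinates, the sign hypothesis $a_i\ge 0\ge a_{i+1}$ is precisely what lets us permute the padded vector $\ab=(a_1,\dots,a_n,0,\dots,0)\in\RR^m$ into a vector that is non-increasing in all $m$ coordinates; meanwhile $\db$ is by construction non-decreasing in $\RR^m$, so the inner product $\langle\sigma(\ab),\db\rangle$ will be minimized by the ``anti-sorted'' pairing.

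Concretely, I would introduce a permutation $\tau\in\Sym(m)$ that transports the $m-n$ trailing zeros of $\ab$ into the slots strictly between positions $i$ and $i+1$, producing
\[
\ab'\defas(a_1,\dots,a_i,0,\dots,0,a_{i+1},\dots,a_n)\in\RR^m.
\]
From the chain $a_1\ge\cdots\ge a_i\ge 0\ge a_{i+1}\ge\cdots\ge a_n$ one reads off that $\ab'\in\OO^-(\RR^m)$, and from the very definition of $\OO_{i,m}(\bb)$ every element $\db=(b_1,\dots,b_i,c_1,\dots,c_{m-n},b_{i+1},\dots,b_n)$ of this set lies in $\OO(\RR^m)$. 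For an arbitrary $\sigma\in\Sym(m)$ I would then write $\sigma(\ab)=(\sigma\tau^{-1})(\ab')$ and apply the rearrangement inequality to the pair $(\ab',\db)\in\OO^-(\RR^m)\times\OO(\RR^m)$ with permutation $\sigma\tau^{-1}$, which yields
\[
\langle\sigma(\ab),\db\rangle=\langle(\sigma\tau^{-1})(\ab'),\db\rangle\ge\langle\ab',\db\rangle.
\]
A direct expansion shows that the zero block in $\ab'$ annihilates the $(c_1,\dots,c_{m-n})$ block in $\db$, so $\langle\ab',\db\rangle=\sum_{j=1}^n a_jb_j=\langle\ab,\bb\rangle$, which is nonnegative by hypothesis. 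Chaining these inequalities closes the argument.

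I do not foresee a real obstacle; the only step that requires any conceptual input is recognizing that the assumption $a_i\ge 0\ge a_{i+1}$ identifies the unique descent slot of $\ab$ where the padding zeros fit monotonically, and this is exactly the same slot in which the definition of $\OO_{i,m}(\bb)$ inserts its free coordinates. This alignment is what allows the rearrangement inequality to be applied in the enlarged ambient space $\RR^m$ rather than only in $\RR^n$.
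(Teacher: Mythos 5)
Your proof is correct and follows essentially the same path as the paper's: insert the $m-n$ zeros of the padded $\ab$ into the slot between $a_i$ and $a_{i+1}$ to obtain a non-increasing vector $\ab'$ in $\RR^m$, then apply the rearrangement inequality to $(\ab',\db)$ and observe that the zero block kills the inserted coordinates so that $\langle\ab',\db\rangle=\langle\ab,\bb\rangle$. The only difference is notational (the paper calls your $\ab'$ by the name $\cb$).
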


\begin{proof}
     Let $\cb\in\OO^-(\RR^m)$ be the vector obtained from $\ab$ by inserting $m-n$ zeros between $a_i$ and $a_{i+1}$, i.e.,
    \[
    \cb=(a_1,\dots,a_i,0,\dots,0,a_{i+1},\dots,a_n).
    \]
    Then $\cb=\tau(\ab)$ for some $\tau\in\Sym(m)$. Moreover, for any $\db\in \OO_{i,m}(\bb)$, it is easily seen that $\langle \cb,\db\rangle=\langle \ab,\bb\rangle$. Since $\cb\in\OO^-(\RR^m)$ and $\db\in\OO(\RR^m)$, the rearrangement inequality yields
    \[
    \langle \sigma(\ab),\db\rangle
    =\langle \sigma\circ\tau^{-1}(\cb),\db\rangle
    \ge \langle \cb,\db\rangle
    =\langle \ab,\bb\rangle\ge 0
    \]
    for all $\sigma\in\Sym(m)$.
\end{proof}

We also need the following basic yet crucial relationship between successive dual cones.

\begin{lemma}
	\label{l:dual-cone}
	Let $\C=(C_{n})_{n\geq 1}$ be a $\Sym$-invariant chain of cones, and let $\C^*=(C_{n}^*)_{n\geq 1}$ denote the corresponding chain of dual cones. If $\vb=(v_1,\dots,v_{n+1})\in C_{n+1}^*$, then for any $i\in[n+1]$, the vector $\hat\vb=(v_1,\dots,v_{i-1},v_{i+1},\dots,v_{n+1})$ belongs to $C_{n}^*.$
\end{lemma}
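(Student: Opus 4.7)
The plan is to reduce the statement to a direct check using the definition of the dual cone together with the facts that $C_n\subseteq C_{n+1}$ and that $C_{n+1}$ is $\Sym(n+1)$-invariant. Explicitly, I need to show that for every $\ub'=(u_1,\dots,u_n)\in C_n$, the inequality $\langle\ub',\hat\vb\rangle\ge 0$ holds, and I will do this by lifting $\ub'$ to a carefully chosen element of $C_{n+1}$ whose pairing with $\vb$ equals the desired inner product.

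Fix $\ub'\in C_n$ and $i\in[n+1]$. First I would form the padded vector
\[
\tilde\ub=(u_1,\dots,u_{i-1},0,u_i,u_{i+1},\dots,u_n)\in\RR^{n+1},
\]
obtained by inserting a zero in the $i$-th coordinate. The next step is to show $\tilde\ub\in C_{n+1}$. Since $\C$ is an ascending chain, viewing $\ub'$ as an element of $\RR^{n+1}$ (with a $0$ in the $(n+1)$-st slot) we have $\ub'\in C_n\subseteq C_{n+1}$. Let $\sigma\in\Sym(n+1)$ be the permutation fixing $1,\dots,i-1$, sending $j\mapsto j+1$ for $i\le j\le n$, and sending $n+1\mapsto i$. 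Then a direct check shows $\sigma(\ub')=\tilde\ub$, and $\Sym(n+1)$-invariance of $C_{n+1}$ yields $\tilde\ub\in C_{n+1}$.

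Next I compute the pairing. Since the inserted coordinate of $\tilde\ub$ is $0$,
\[
\langle\tilde\ub,\vb\rangle
=\sum_{k=1}^{i-1}u_kv_k+0\cdot v_i+\sum_{k=i}^{n}u_kv_{k+1}
=\langle\ub',\hat\vb\rangle,
\]
which matches $\langle\ub',\hat\vb\rangle$ exactly by the definition of $\hat\vb$. Since $\tilde\ub\in C_{n+1}$ and $\vb\in C_{n+1}^*$, the left-hand side is nonnegative, hence so is $\langle\ub',\hat\vb\rangle$. As $\ub'\in C_n$ was arbitrary, we conclude $\hat\vb\in C_n^*$.

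There is no genuine obstacle here; the only thing to be careful about is the bookkeeping of the permutation realizing the zero-insertion and the verification that the pairings agree, both of which are routine once the padded vector $\tilde\ub$ is written down.
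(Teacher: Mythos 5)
Your proof is correct and follows the same route as the paper: pad $\ub'$ with a zero in the $i$-th slot to get $\tilde\ub\in C_{n+1}$, and observe that $\langle\tilde\ub,\vb\rangle=\langle\ub',\hat\vb\rangle\ge 0$. You merely spell out the $\Sym(n+1)$-invariance and chain-containment step that the paper leaves implicit with ``it is clear.''
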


\begin{proof}
	For any $\ub=(u_1,\dots,u_{n})\in C_{n}$, define $\tilde{\ub}=(u_1,\dots,u_{i-1},0,u_{i},\dots,u_{n}).$
	Then it is clear that $\tilde{\ub}\in C_{n+1}$, and that $\langle \ub,\hat{\vb}\rangle=\langle \tilde\ub,\vb\rangle\ge 0$, where the inequality holds because $\vb\in C_{n+1}^*$.  Hence, $\hat\vb\in C_{n}^*$, as claimed.
\end{proof}

With these preparations in place, we are now ready to prove \Cref{t:Minkowski-Weyl}.

\begin{proof}[Proof of \Cref{t:Minkowski-Weyl}]
	We first show that $F_n\subseteq C_n^*$ for all $n>2r.$ Let $G_r\subseteq\OO^-(C_r)$ be a $\Sym(r)$-equivariant generating set for $C_r$. For instance, one may simply take $G_r=\OO^-(C_r)$, since $C_r=\Sym(r)(\OO^-(C_r))$. Given that $\ind(\C)=r$, it is easy to see that $G_r$ is  a $\Sym(n)$-equivariant generating set for $C_n$ for all $n\ge r$. Thus, in order to prove that $F_n\subseteq C_n^*$ for all $n>2r$, it suffices to verify that
	\[
	\langle \sigma(\ub),\vb\rangle\ge 0
	\ \text{ for all }\ \ub\in G_r, \ \vb\in F_n, \text{ and } \sigma\in\Sym(n).
	\]
	Fix $\ub=(u_1,\dots,u_r)\in G_r$, $\vb\in F_n$, and $\sigma\in\Sym(n)$. Adopting the convention that $u_0 = +\infty$ and $u_{r+1} = -\infty$, we can find an index $j\in \{0,1,\dots,r\}$ such that $u_j\ge0\ge u_{j+1}$. Define $\tilde{\ub}\in\OO^-(\RR^{2r})$ to be the vector obtained from $\ub$ by inserting $r$ zeros between $u_j$ and $u_{j+1}$:
	\[
	\tilde{\ub}=(u_1,\dots,u_{j},0,\dots,0,u_{j+1},\dots,u_{r})\in \OO^-(\RR^{2r}).
	\]
	 Then $\tilde{\ub}=\tau(\ub)$ for some $\tau\in\Sym(2r)$, and so $\tilde{\ub}\in C_{2r}$. Moreover, by construction, the $r$-th and $(r+1)$-th entries of $\tilde{\ub}$ satisfy $\tilde{u}_r\ge 0\ge \tilde{u}_{r+1}$. Since $\vb\in F_n$, there exists $\wb\in F_{2r}$ such that $\vb\in F_{r,n}(\wb)$. We have $\langle \tilde{\ub},\wb\rangle\ge 0$ because $\tilde{\ub}\in C_{2r}$ and $\wb\in F_{2r}\subset C_{2r}^*$. Applying \Cref{l:padding} to $\tilde{\ub}$ and $\wb$, noting that $\tilde{u}_r\ge 0\ge \tilde{u}_{r+1}$ and that $\vb\in F_{r,n}(\wb)\subseteq \OO_{r,n}(\wb)$,  we obtain 
	\[
	\langle \sigma(\ub),\vb\rangle
	=\langle \sigma\circ\tau^{-1}(\tilde{\ub}),\vb\rangle
	\ge 0,
	\]
	as desired.
	
	To complete the proof, it remains to show that $C_n^*\subseteq\cone(\Sym(n)(F_n))$ for all $n>2r$. Since $C_n^*=\Sym(n)(\OO(C_n^*))$, it is enough to prove the inclusion $\OO(C_n^*)\subseteq\cone(F_n)$.
	Take any $\bb=(b_1,\dots,b_n)\in \OO(C_n^*)$. Then repeated applications of \Cref{l:dual-cone} yield
	\[
	\cb\defas(b_1,\dots,b_r,b_{n-r+1},\dots,b_n)\in \OO(C_{2r}^*) =\cone(F_{2r}),
	\]
	where the last equality follows from the assumption that $F_{2r}$ generates $\OO(C_{2r}^*)$.
	Evidently, $\bb\in \OO_{r,n}(\cb)$, and thus $\bb\in \conv(F_{r,n}(\cb))$ by \Cref{l:convex-hull}. According to \Cref{l:Fim}, we have 
	$$
	F_{r,n}(\cb)
	\subseteq\cone\Big(\bigcup_{\ab\in F_{2r}} F_{r,n}(\ab)\Big)
	=\cone(F_n).
	$$
	Consequently,
	\begin{align*}
		\bb\in 
		\conv(F_{r,n}(\cb))
		\subseteq
		\conv(\cone(F_n))
		=\cone(F_n).
	\end{align*}
	Therefore, $\OO(C_n^*)\subseteq\cone(F_n)$, as required.
\end{proof}

\begin{remark}
	\label{r:W-M-improved}
 \Cref{t:Minkowski-Weyl} can be slightly refined to encompass the case of nonnegative cones treated in \cite[Theorem 4.4]{LR21}. Adopting the notation from the proof of \Cref{t:Minkowski-Weyl}, we define
 \begin{align*}
 s&=\max\{i\in[r]\mid u_i>0\ \text{ for some } \ (u_1,\dots,u_r)\in G_r\},\\
 t&=\max\{j\in[r]\mid u_{r-j}<0\ \text{ for some } \ (u_1,\dots,u_r)\in G_r\},
 \end{align*}
 with the convention that $s=0$ if $G_r\subseteq\RR_{\le0}^r$ and $t=0$ if $G_r\subseteq\RR_{\ge0}^r$.
 Let $p=\max\{s+t,r+1\}$. Then $r+1\le p\le 2r$, and $p=r+1$ if $G_r\subseteq\RR_{\ge0}^r$ or $G_r\subseteq\RR_{\le0}^r$. Suppose $F_p \subseteq \OO(\RR^p)$ is a generating set for $\OO(C_p^*)$. Then by adapting the argument used in the proof of \Cref{t:Minkowski-Weyl}, one can show that for all $n>p$, the set
 $$F_n\defas\bigcup_{\ub\in F_{p}} F_{s,n}(\ub)$$
 is a $\Sym(n)$-equivariant generating set for $C_n^*$. This result generalizes \cite[Theorem 4.4]{LR21}, because in the context of nonnegative cones, it can be  verified that given a generating set $F_{r}\subseteq \OO(\RR^{r})$ for $\OO(C_{r}^*)$, the set
 \[
 F_{r+1}=\{(u_1,\dots,u_r,u_r)\mid\ub=(u_1,\dots,u_r)\in F_r,\ |\supp(\ub)|\ge2\}\cup\{\eb_{r+1}\}
 \subseteq \OO(\RR^{r+1})
 \]
 is a generating set for $\OO(C_{r+1}^*)$.
\end{remark}

Given a $\Sym$-invariant cone $C\subseteq\RR^{(\NN)}$, \Cref{t:Minkowski-Weyl} has implications for the global dual cone $C^*$. When $C\ne\{\nub\}$, identifying an explicit nonzero element of $C^*$ is generally nontrivial. However, as a consequence of \Cref{t:Minkowski-Weyl}, we obtain the following result. For $\ub=(u_1,\dots,u_{n})\in\RR^n$ and $\vb=(v_1,v_2,\dots)\in \RR^{\NN}$, let $(\ub,\vb)$ denote the element of $\RR^{\NN}$ formed by concatenating $\ub$ and $\vb$:
\[
(\ub,\vb)=(u_1,\dots,u_{n},v_1,v_2,\dots).
\] 

\begin{corollary}
	\label{c:global-dual-cone}
	Let $C\subseteq\RR^{(\NN)}$ be a $\Sym$-invariant cone and let $C^*\subseteq\RR^{\NN}$ denote its dual cone. Suppose that $C$ is $\Sym$-equivariantly generated by a subset $G_r\subseteq\RR^r$ for some $r\ge1$. Let $C_{2r}^*\subseteq\RR^{2r}$ be the dual cone of the cone $C_{2r}=	\cone(\Sym(2r)(G_r))$. Then the set
	\[
	F=\{(\ub,\vb)\mid\ub=(u_1,\dots,u_{2r})\in \OO(C_{2r}^*),\ \vb\in \OO([u_r,u_{r+1}]^\NN)\}
	\]
	is contained $C^*$.
\end{corollary}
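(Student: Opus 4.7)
The plan is to adapt the padding technique used in the proof of Theorem~\ref{t:Minkowski-Weyl}. Fix $(\ub, \vb) \in F$ with $\ub = (u_1, \ldots, u_{2r}) \in \OO(C_{2r}^*)$ and $\vb \in \OO([u_r, u_{r+1}]^\NN)$. Since $C = \cone(\Sym(G_r))$, to show $(\ub, \vb) \in C^*$ it suffices to verify that $\langle \sigma(\gb), (\ub, \vb) \rangle \geq 0$ for every $\gb \in G_r$ and every $\sigma \in \Sym$.

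First I would produce a companion vector $\hat{\gb} \in \OO^-(\RR^{2r}) \cap C_{2r}$ from $\gb$: rearrange $\gb$ into its non-increasing form $\tilde{\gb} \in \OO^-(\RR^r)$, locate the sign-change index $j \in \{0, 1, \ldots, r\}$ with $\tilde{g}_j \geq 0 \geq \tilde{g}_{j+1}$ (using the conventions $\tilde{g}_0 = +\infty$, $\tilde{g}_{r+1} = -\infty$), and insert $r$ zeros between positions $j$ and $j+1$ of $\tilde{\gb}$. A short case check confirms that the resulting vector $\hat{\gb}$ satisfies $\hat{g}_r \geq 0 \geq \hat{g}_{r+1}$, and $\hat{\gb}$ is a $\Sym(2r)$-permutation of the natural embedding of $\gb$ in $\RR^{2r}$, so $\hat{\gb} \in C_{2r}$. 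Pairing with $\ub \in C_{2r}^*$ gives $\langle \hat{\gb}, \ub \rangle \geq 0$.

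Next, for a fixed $\sigma \in \Sym$ I would choose $m > 2r$ large enough that $\sigma(\gb) \in \RR^m$ and set
$$\db \defas (u_1, \ldots, u_r, v_1, \ldots, v_{m-2r}, u_{r+1}, \ldots, u_{2r}) \in \RR^m.$$
The hypothesis $\vb \in \OO([u_r, u_{r+1}]^\NN)$ forces $(v_1, \ldots, v_{m-2r}) \in \OO([u_r, u_{r+1}]^{m-2r})$, hence $\db \in \OO_{r,m}(\ub)$. Lemma~\ref{l:padding}, applied with $\ab = \hat{\gb}$, $\bb = \ub$, and $i = r$, then yields $\langle \pi(\hat{\gb}), \db \rangle \geq 0$ for every $\pi \in \Sym(m)$.

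Finally, I would produce a permutation $\tau \in \Sym(m)$ that swaps the middle block $(v_1, \ldots, v_{m-2r})$ in $\db$ with the tail block $(u_{r+1}, \ldots, u_{2r})$, so that $\tau(\db)$ equals the truncation of $(\ub, \vb)$ to $\RR^m$. Writing $\sigma(\gb) = \pi(\hat{\gb})$ for an appropriate $\pi \in \Sym(m)$ and using the $\Sym$-invariance of the pairing gives
$$\langle \sigma(\gb), (\ub, \vb) \rangle = \langle \pi(\hat{\gb}), \tau(\db) \rangle = \langle \tau^{-1}\pi(\hat{\gb}), \db \rangle \geq 0,$$
which completes the argument. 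The main technical step is the bookkeeping in the padding construction: arranging the sign change of $\hat{\gb}$ to land exactly at position $r$ so that the slot into which $\OO_{r,m}(\ub)$ inserts coordinates matches the slot into which $\vb$ is to be appended (modulo a single reshuffling $\tau$). Once this alignment is in place, Lemma~\ref{l:padding} supplies the inequality essentially for free.
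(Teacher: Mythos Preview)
Your argument is correct and follows essentially the same route as the paper: reduce to checking $\langle\sigma(\gb),(\ub,\vb)\rangle\ge 0$ for $\gb\in G_r$ and $\sigma\in\Sym$, truncate to some $\RR^m$ with $m>2r$, permute the truncated vector into $\OO_{r,m}(\ub)$, and use the padding construction on $\gb$ together with Lemma~\ref{l:padding}. The only difference is cosmetic: the paper packages the padding step by invoking Theorem~\ref{t:Minkowski-Weyl} (and Lemma~\ref{l:convex-hull}) to conclude that the permuted truncation lies in $C_m^*$, whereas you inline that part of the proof of Theorem~\ref{t:Minkowski-Weyl}---constructing $\hat{\gb}$ with the sign change at position $r$ and applying Lemma~\ref{l:padding} directly---which is arguably cleaner here since you only need the inclusion $F_m\subseteq C_m^*$, not the full generation statement.
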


\begin{proof}
	Consider the chain of cones $\C=(C_{n})_{n\geq 1}$ defined by
	\[
	C_n=
	\begin{cases}
		\{\nub\} &\text{if } n<r,\\
		\cone(\Sym(n)(G_r))&\text{if } n\ge r.
	\end{cases}
	\] 
	Evidently, $\C$ is a stabilizing $\Sym$-invariant chain with stability index $r.$ Let $\C^*=(C_{n}^*)_{n\geq 1}$ denote the corresponding chain of dual cones.  Since $\OO(C_{2r}^*)$ is trivially a generating set for $\OO(C_{2r}^*)$, it follows from \Cref{t:Minkowski-Weyl} that
	\[
	F_n=\bigcup_{\ub\in \OO(C_{2r}^*)} F_{r,n}(\ub)
	\]
	is a $\Sym(n)$-equivariant generating set for $C_n^*$ for all $n>2r$. 
	
	In order to show that $F\subseteq C^*$, it suffices to verify that
	\[
	\langle \sigma(\ab),\wb\rangle\ge 0
	\  \text{ for all }\ \ab\in G_r, \ \wb\in F, \ \sigma\in\Sym.
	\]
	Choose $n>2r$ such that $\sigma\in\Sym(n)$. Then $\sigma(\ab)\in C_n\subseteq\RR^n$, and thus
	\[
	\langle \sigma(\ab),\wb\rangle
	=\langle \sigma(\ab),\wb_n\rangle,
	\]
	where $\wb_n\in \RR^n$ denotes the truncation of $\wb$ to its first $n$ coordinates. By the definition of $F$, we have $\wb_n=(\ub,\vb_{n-2r})$ for some $\ub\in \OO(C_{2r}^*)$ and $\vb_{n-2r}\in \OO([u_r,u_{r+1}]^{n-2r})$. Consequently, there exists $\tau\in\Sym(n)$ such that
	\[
	\tau(\wb_n)=(u_1,\dots,u_r,\vb_{n-2r},u_{r+1},\dots,u_{2r})
	\in \OO_{r,n}(\ub).
	\]
	It thus follows from \Cref{l:convex-hull} that
	\[
	\tau(\wb_n)\in\conv(F_{r,n}(\ub))
	\subseteq\conv(F_{n})
	\subseteq\cone(F_{n})
	\subseteq C_n^*.
	\]
	This implies $\wb_n\in C_n^*$ since $C_n^*$ is $\Sym(n)$-invariant. Therefore,
	\[
	\langle \sigma(\ab),\wb\rangle
	=\langle \sigma(\ab),\wb_n\rangle\ge 0,
	\]
	as desired.
\end{proof}

\begin{example}
Let $C\subseteq\RR^{(\NN)}$ be the cone $\Sym$-equivariantly generated by the two vectors:
\[
(-2,-1,4)\quad\text{and}\quad (-3,1,3).
\]
Then $C$ is the direct limit of the chain $\C$ considered in \Cref{e:dual-cone}. From this example and \Cref{c:global-dual-cone}, we obtain the following elements of the dual cone $C^*$:
\[
(1,1,1,\dots),\ (5,6^{(4)},7,6,6,6,\dots),\ (3^{(i)},4^{(6-i)},\vb)
\]
for all $i\in[5]$ and $\vb\in\OO([3,4]^\NN)$.
\end{example}

\Cref{t:Minkowski-Weyl} establishes a necessary condition for the stabilization of a $\Sym$-invariant chain of cones $\C$ in terms of its dual chain $\C^*$. It remains an open question whether this condition is also sufficient. More broadly, the following problem merits further investigation.

\begin{problem}
	Characterize the stabilization, and more generally, other properties of interest (such as equivariant finite generation), of a $\Sym$-invariant chain of cones $\C$ in terms of its dual chain $\C^*$. 
\end{problem}

\section{Local-global principles}
\label{section-local-global}

In this section, we characterize equivariant finite generation of $\Sym$-invariant cones in terms of their associated chains of local cones. We also present a generalization of this result that characterizes the stabilization of such chains. As a consequence, we derive a characterization of equivariant finite generation for $\Sym$-invariant normal monoids. These results, which are of independent interest, play a crucial role in our proof of the equivariant Gordan's lemma in \Cref{sec:equi-Gordan}. 

We first establish the following characterization of equivariant finite generation for global cones, extending the case of nonnegative cones considered in \cite[Corollary 5.5]{KLR} and \cite[Lemma 2.1]{LR21}. An analogous result for lattices can be found in \cite[Theorem 3.6]{LR24}.

\begin{theorem}
	\label{t:finte-generation}
	Let $\C=(C_{n})_{n\geq 1}$ be a $\Sym$-invariant chain of cones with limit $C=\bigcup_{n\geq 1}C_n.$ Then the following statements are equivalent:
	\begin{enumerate}
		\item
		$\C$ stabilizes, and $C_n$ is eventually finitely generated;
		\item 
		There exist $p\in\NN$ and a finite subset $G_p\subseteq \RR^{p}$ such that $G_p$ is a $\Sym(n)$-equivariant generating set for $C_n$ for all $n\ge p$;
		\item
		There exists $q\in\NN$ such that for all $n\ge q$, the following conditions hold:
		\begin{enumerate}
			\item
			$C \cap \RR^{n}= C_{n}$,
			\item
			$C_{n}$ is finitely generated by elements of support size at most $q$;
		\end{enumerate}
	\item
	$C$ is $\Sym$-equivariantly finitely generated.
	\end{enumerate}
\end{theorem}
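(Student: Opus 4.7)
The plan is to prove the cycle $(i) \Rightarrow (ii) \Rightarrow (iv) \Rightarrow (iii) \Rightarrow (i)$. Three of these implications are essentially bookkeeping. For $(i) \Rightarrow (ii)$, I choose $p \ge \ind(\C)$ such that $C_p$ is finitely generated; any finite generating set $G_p \subset \RR^p$ for $C_p$ then satisfies $C_n = \cone(\Sym(n)(G_p))$ for all $n \ge p$ by stabilization. For $(ii) \Rightarrow (iv)$, since $C = \bigcup_n C_n$, every $u \in C$ lies in some $\cone(\Sym(n)(G_p)) \subseteq \cone(\Sym(G_p))$, and the reverse inclusion is immediate. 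For $(iii) \Rightarrow (i)$, I apply $(iii)(a)$ at both $n$ and $m$ to obtain $C_n \cap \RR^m = C_m$ for $n \ge m \ge q$; then any generator of $C_n$ of support size $\le q \le m$ can be moved by some $\tau \in \Sym(n)$ into $\RR^m$, landing in $C_m$, which yields stabilization.

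The heart of the argument is $(iv) \Rightarrow (iii)$. Given a finite $\Sym$-equivariant generating set $G \subseteq \RR^p$ for $C$, I set $D_n = \cone(\Sym(n)(G))$; this is a stabilizing $\Sym$-invariant chain of index $p$ whose cones are finitely generated by elements of support size at most $p$, and $\bigcup_n D_n = C$. Since $G$ is finite, $G \subseteq C_{n_0}$ for some $n_0$, so $D_n \subseteq C_n \subseteq C \cap \RR^n$ for $n \ge n_0$. To conclude $(iii)$, it therefore suffices to establish the \emph{descent}
\[
C \cap \RR^n \subseteq D_n \quad\text{for all}\ n \ge 2p,
\]
from which $C_n = D_n = C \cap \RR^n$ follows, yielding both $(iii)(a)$ and $(iii)(b)$ with $q$ taken sufficiently large.

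For the descent, the strategy is to test membership in $D_n$ via the dual. By \Cref{t:Minkowski-Weyl}, for $n > 2p$ the cone $D_n^*$ has a $\Sym(n)$-equivariant generating set $F_n$ whose elements have the form $h = (w_1,\dots,w_p, w_p^{(k)}, w_{p+1}^{(l)}, w_{p+1},\dots,w_{2p})$ with $\wb = (w_1,\dots,w_{2p}) \in F_{2p} \subseteq \OO(D_{2p}^*)$ and $k + l = n - 2p$. The key move is to extend each such $h$ to a global dual element: there exists $\sigma \in \Sym(n)$ with $\sigma(h) = (\wb, w_p^{(k)}, w_{p+1}^{(l)})$, and \Cref{c:global-dual-cone} then delivers
\[
(\wb, w_p^{(k)}, w_{p+1}^{(l)}, w_{p+1}, w_{p+1}, \dots) \in C^*,
\]
since the infinite tail lies in $\OO([w_p, w_{p+1}]^\NN)$. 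Applying $\sigma^{-1}$ (which fixes coordinates beyond $n$) produces $\tilde h \in C^*$ with $\tilde h|_{[n]} = h$. For any $u \in C \cap \RR^n$, pairing with $\tilde h$ gives $\langle u, h \rangle = \langle u, \tilde h \rangle \ge 0$, because $u$ vanishes beyond $[n]$; since all $\Sym(n)$-translates of $u$ remain in $C \cap \RR^n$, this yields $\langle u, v \rangle \ge 0$ for every $v \in D_n^*$, and classical Minkowski--Weyl applied locally to $D_n \subseteq \RR^n$ concludes $u \in D_n$.

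The main obstacle is precisely this descent step: aligning the explicit form of the local dual generators furnished by \Cref{t:Minkowski-Weyl} with the global dual elements produced by \Cref{c:global-dual-cone}, mediated by the permutation $\sigma$. It is this matching that forces the threshold $2p$ (rather than $p$) on $n$ and is the step where the full equivariant Minkowski--Weyl machinery of Section~\ref{sec-Minkowski-Weyl} is indispensable.
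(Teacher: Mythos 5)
Your proof is correct and proves the same theorem, but it routes the cycle differently from the paper and uses a genuinely different mechanism for the hard step. The paper proves $(i)\Rightarrow(ii)\Rightarrow(iii)\Rightarrow(i)$ and separately $(ii)\Rightarrow(iv)\Rightarrow(iii)$; the hard step there is $(ii)\Rightarrow(iii)$, and the key move is a \emph{one-step} descent $C_{n+1}\cap\RR^n=C_n$ obtained by padding each dual generator $\wb=(w_1,\dots,w_n)\in F_n$ to $\widetilde{\wb}=(w_1,\dots,w_r,w_r,w_{r+1},\dots,w_n)\in F_{n+1}$ and exploiting $C_n=C_n^{**}$. You instead prove $(i)\Rightarrow(ii)\Rightarrow(iv)\Rightarrow(iii)\Rightarrow(i)$, doing $(iv)\Rightarrow(iii)$ directly by extending each local dual generator $h\in F_n$ of $D_n^*$ to a genuinely \emph{global} element $\tilde{h}\in C^*$ via \Cref{c:global-dual-cone}, then pairing. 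Both arguments lean on \Cref{t:Minkowski-Weyl}, but the mechanism differs: the paper compares consecutive dual cones in the chain, whereas you go straight to the global dual. What the paper's route buys is modularity: its $(ii)\Rightarrow(iii)$ is reused verbatim inside $(iv)\Rightarrow(iii)$ and, more importantly, in the proof of \Cref{t:stabilizing-cones}. What your route buys is conceptual directness ($(iv)\Rightarrow(iii)$ without the detour through the auxiliary chain $\C'$ and a previously-proved implication), at the price of invoking the heavier \Cref{c:global-dual-cone}. Two small points to tidy in your write-up: the descent should be asserted for $n>2p$ rather than $n\ge 2p$, to match the hypothesis of \Cref{t:Minkowski-Weyl} (take $q=2p+1$ in $(iii)$), and $\Sym(n)$-invariance of $D_n^*$ should be used explicitly when passing from $\langle u,h\rangle\ge0$ for $h\in F_n$ to $\langle u,v\rangle\ge0$ for all $v\in D_n^*$ (replace $u$ by its $\Sym(n)$-translates, as you already indicate).
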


\begin{proof}
	(i)$\Rightarrow$(ii): Assume that $\C$ stabilizes with stability index $r$. Choose $p\ge r$ such that $C_p$ has a finite generating set $G_p$. Since the chain stabilizes at $r$, we have
	\[
	C_n=\cone(\Sym(n)(C_p))
	=\cone(\Sym(n)(G_p))
	\quad\text{for all }\ n\ge p.
	\]
	Hence, $G_p$ is a $\Sym(n)$-equivariant generating set for $C_n$ for all $n\ge p$. 
	\smallskip
	
	(ii)$\Rightarrow$(iii): We show that the two conditions in (iii) hold with $q=2p$. By assumption, $C_n$ is generated by $G_n\defas\Sym(n)(G_p)$ for all $n\ge p$. Since $G_p$ is a finite subset of $\RR^p$, it is evident that $G_n$ is also finite and its elements have support size at most $p$ for all $n\ge p$. In particular, every element of $G_n$ has support size at most $q$ for all $n\ge q$, establishing (iii)(b).
	
	To verify (iii)(a), it suffices to show that $C_{n+1}\cap\RR^n=C_n$ for all $n\ge q$. Indeed, this implies
	\[
	C_m\cap\RR^n=(C_m\cap\RR^{m-1})\cap\RR^n
	=C_{m-1}\cap\RR^n=\cdots
	=C_{n+1}\cap\RR^n=C_n
	\]
	for all $m>n\ge q$, and hence
	\[
	C \cap \RR^{n}= \bigcup_{m\geq 1}C_m\cap\RR^n
	=\bigcup_{m\geq n+1}(C_m\cap\RR^n)
	=C_{n}
	\]
	for all $n\ge q$, as desired.
	
	To prove that $C_{n+1}\cap\RR^n=C_n$ for all $n\ge q$, we use two key observations: 
	\begin{itemize}
	\item 
	Since $C_n$ is finitely generated, it satisfies $C_n=C_n^{**}$, where $C_n^{**}$ denotes the double dual cone of $C_n$ (see, e.g., \cite[Theorem 1.16(b)]{BG}). 
	\item 
	The chain $\C$ stabilizes with stability index $r \leq p=q/2$, and thus, by \Cref{t:Minkowski-Weyl} and \Cref{r:M-W}(i), there exists a $\Sym(2r)$-equivariant generating set $F_{2r}$ for $C_{2r}^*$ that determines a $\Sym(n)$-equivariant generating set $F_n$ for $C_n^*$ for all $n \ge q$. 
	\end{itemize}
	Now let $\ub=(u_1,\dots,u_n)\in C_{n+1}\cap\RR^n$ for some $n\ge q$. 
	To show that $\ub\in C_n=C_n^{**}$, we need to verify that $\langle \ub,\vb\rangle\ge 0$ for all $\vb\in C_n^*$. Since $C_n^*$ is $\Sym(n)$-equivariantly generated by $F_n$, it is enough to check that $\langle \ub,\sigma(\wb)\rangle\ge 0$ for all $\wb=(w_1,\dots,w_n)\in F_n$ and all $\sigma\in\Sym(n).$ 
	From the construction of $F_n$ and $F_{n+1}$, it follows that $$\widetilde{\wb}\defas(w_1,\dots,w_r,w_{r},w_{r+1},\dots,w_n)\in F_{n+1}.$$
	Choose  $\tau\in\Sym(n+1)$ such that $\tau(\widetilde{\wb})=(\sigma(\wb),w_r).$ 
	We have $\langle \ub,\tau(\widetilde{\wb})\rangle\ge0$ because $\ub\in C_{n+1}$ and $\tau(\widetilde{\wb})
	\in \Sym(n+1)(F_{n+1})\subseteq C_{n+1}^*.$
	Since 
	$$
	0\le\langle \ub,\tau(\widetilde{\wb})\rangle
	=\langle \ub,\sigma(\wb)\rangle+0\cdot w_s
	=\langle \ub,\sigma(\wb)\rangle,
	$$
	we are done.
	\smallskip
	
	(iii)$\Rightarrow$(i): It remains to show that $\C$ stabilizes. Let $n\ge q$ and let $G_n$ be a generating set for $C_n$ whose elements have support size at most $q$. Then for each $\ub\in G_n$, there exists a $\sigma\in\Sym(n)$ such that $\ub'=\sigma(\ub)\in \RR^q$. Since $\ub=\sigma^{-1}(\ub')$ and
	$\ub'\in C\cap \RR^q=C_q,$
	we deduce that $G_n\subseteq\Sym(n)(C_q).$ Therefore,
	\[
	C_n=\cone(G_n)
	\subseteq\cone(\Sym(n)(C_q))
	\subseteq C_n,
	\] 
	and consequently, $C_n=\cone(\Sym(n)(C_q))$ for all $n\ge q$, as required.
	\smallskip
	
	(ii)$\Rightarrow$(iv): The set $G_p$ clearly forms a $\Sym$-equivariant generating set for $C=\bigcup_{n\ge1}C_n.$ 
	\smallskip
	
	(iv)$\Rightarrow$(iii): Let $G$ be a finite $\Sym$-equivariant generating set for $C$. Since $C=\bigcup_{n\ge1}C_n$, there exists $p\ge 1$ such that $G\subseteq C_p$. Define a new chain $\C'=(C_{n}')_{n\geq 1}$ with
	\[
	C_{n}'=
	\begin{cases}
		\{\nub\}&\text{if } 1\le n< p,\\
		\cone(\Sym(n)(G))&\text{if } n\ge p.
	\end{cases}
	\]
	Evidently, $\C'$ is a $\Sym$-invariant chain whose limit is $C = \bigcup_{n \geq 1} C_n'$, and $G$ serves as a $\Sym(n)$-equivariant generating set for $C_n'$  for all $n\ge p$. By the implication (ii)$\Rightarrow$(iii) already proved, we have $C\cap \RR^n=C_{n}'$ for all $n\ge 2p$. Furthermore, since
	\[
	C_{n}'=\cone(\Sym(n)(G))\subseteq\cone(\Sym(n)(C_p))\subseteq C_n\subseteq C\cap \RR^n
	\]
	for all $n\ge p$, we conclude that
	\[
	C_n=\cone(\Sym(n)(G))= C\cap \RR^n
	\ \text{ for all } n\ge 2p.
	\]
	This proves (iii) and thus completes the proof of the theorem.
\end{proof}

\begin{remark}
	As we have seen in the proof of the implication (ii)$\Rightarrow$(iii) of \Cref{t:finte-generation}, the property of the chain of dual cones established in \Cref{t:Minkowski-Weyl} is instrumental in proving the equality $C \cap \RR^n = C_n$ for sufficiently large $n$. In the case of nonnegative cones, however, the argument simplifies significantly and relies essentially on the following elementary fact: if $\ub = \sum_{i=1}^s \lambda_i \ab_i$ with $\lambda_i > 0$ and $\ab_i \in \RR^{(\NN)}_{\geq 0}$, then $\supp(\ab_i) \subseteq \supp(\ub)$ for all $i \in [s]$. For further details, see the proof of \cite[Corollary 2.2]{LR21}.
\end{remark}

The next result provides several characterizations of the stabilization of $\Sym$-invariant chains of local cones. One characterization generalizes and refines the case of nonnegative cones treated in \cite[Lemma 6.2]{LR21}, while the remaining characterizations are obtained by removing the finite generation conditions from \Cref{t:finte-generation}.

\begin{theorem}
	\label{t:stabilizing-cones}
	Let $\C=(C_{n})_{n\geq 1}$ be a $\Sym$-invariant chain of cones with limit $C=\bigcup_{n\geq 1}C_n.$ Then the following statements are equivalent:
	\begin{enumerate}
		\item
		$\C$ stabilizes;
		\item 
		There exist $p\in\NN$ and a  subset $G_p\subseteq \RR^{p}$ such that $G_p$ is a $\Sym(n)$-equivariant generating set for $C_n$ for all $n\ge p$;
		\item
		There exists $q\in\NN$ such that for all $n\ge q$, the following conditions hold:
		\begin{enumerate}
			\item
			$C \cap \RR^{n}= C_{n}$,
			\item
			For any $(u_1,\dots,u_{n+1})\in C_{n+1}$, there exists $\sigma\in \Sym(n+1)$ such that 
			\[
			u_{\sigma(n)}u_{\sigma(n+1)}\ge0
			\quad\text{and}\quad
			(u_{\sigma(1)},\dots,u_{\sigma(n-1)},u_{\sigma(n)}+u_{\sigma(n+1)})
			\in C_{n};
			\]
		\end{enumerate}
		\item
		There exists $q\in\NN$ such that for all $n\ge q$, the following conditions hold:
		\begin{enumerate}
			\item
			$C \cap \RR^{n}= C_{n}$,
			\item
			$C_{n}$ is generated by elements of support size at most $q$.
		\end{enumerate}
		\item
		$C$ is $\Sym$-equivariantly generated by a subset of $C_{r}$ for some  $r\in\NN$.
	\end{enumerate}
\end{theorem}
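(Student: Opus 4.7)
The plan is to establish the equivalences by closely mirroring \Cref{t:finte-generation}, with the new merging condition (iii) handled by a separate convex-combination argument. I would proceed through the cycle (i)$\Rightarrow$(ii)$\Rightarrow$(iv)$\Rightarrow$(i), then (i)$\Leftrightarrow$(v), and finally (iii)$\Leftrightarrow$(i). The implication (i)$\Rightarrow$(ii) is immediate by taking $p=\ind(\C)$ and $G_p=C_p$, and (iv)$\Rightarrow$(i) is verbatim from \Cref{t:finte-generation}: any generator of $C_n$ of support size at most $q$ can be moved by a $\Sym(n)$-permutation into $C\cap\RR^q=C_q$, giving $C_n\subseteq\cone(\Sym(n)(C_q))$.

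The principal adjustment from \Cref{t:finte-generation} arises in (ii)$\Rightarrow$(iv), since $G_p$ need not be finite and hence $C_n$ need not be closed, blocking a direct use of the duality $C_n=C_n^{**}$. I would bypass this by finite approximation: any $\ub\in C_m\cap\RR^n$ (with $m\ge n\ge 2p$) lies in $\cone(\Sym(m)(G_p'))$ for some finite subset $G_p'\subseteq G_p$, and applying \Cref{t:finte-generation} to the finitely generated subchain $\C'_n=\cone(\Sym(n)(G_p'))$ yields $\ub\in\cone(\Sym(n)(G_p'))\subseteq C_n$. This gives (iv)(a) with $q=2p$; (iv)(b) is immediate from $G_p\subseteq\RR^p$. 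For (i)$\Leftrightarrow$(v), the forward direction takes $G=C_{\ind(\C)}$; conversely, given $G\subseteq C_r$ equivariantly generating $C$, the auxiliary chain $\C'_n=\cone(\Sym(n)(G))$ for $n\ge r$ stabilizes by construction and has limit $C$, so applying the already-established (i)$\Rightarrow$(iv) to $\C'$ produces the sandwich $C_n'\subseteq C_n\subseteq C\cap\RR^n=C_n'$, forcing $C_n=C_n'$ and stability of $\C$.

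For (iii)$\Rightarrow$(i), I proceed by induction on $n\ge q$, establishing $C_n=\cone(\Sym(n)(C_q))$. Given $\ub\in C_{n+1}$, condition (iii)(b) produces $\sigma\in\Sym(n+1)$ and $\vb\in C_n$ equal to the merge of $\sigma(\ub)$ at positions $n$ and $n+1$. Setting $s=u_{\sigma(n)}+u_{\sigma(n+1)}$, the key observation is that when $s\ne 0$, $\sigma(\ub)$ is the convex combination $\alpha(\vb,0)+(1-\alpha)\tau(\vb,0)$ with $\alpha=u_{\sigma(n)}/s\in[0,1]$ and $\tau=(n,n+1)\in\Sym(n+1)$; when $s=0$, the sign condition forces $u_{\sigma(n)}=u_{\sigma(n+1)}=0$ and $\sigma(\ub)=(\vb,0)$ directly. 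Both $(\vb,0)$ and $\tau(\vb,0)$ lie in $\Sym(n+1)(C_n)\subseteq\cone(\Sym(n+1)(C_q))$ by the inductive hypothesis, so $\ub$ does too, closing the induction.

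The direction (i)$\Rightarrow$(iii) is the chief obstacle. Condition (iii)(a) coincides with (iv)(a). For (iii)(b), I would take $q$ suitably large (for instance $q=2\ind(\C)$) and split into two cases for $\ub\in C_{n+1}$: if $\ub$ has some zero coordinate $u_a=0$, choose $\sigma$ with $\sigma(n+1)=a$, so the sign condition is trivial and the merged vector equals, up to $\Sym(n)$-action, the restriction of $\ub$ to $[n+1]\setminus\{a\}$, which lies in $C_{n+1}\cap\RR^n=C_n$ by (iv)(a). The hard case is full support, where every coordinate of $\ub$ is nonzero; here the plan is to exploit the explicit dual generating set for $C_n^*$ furnished by \Cref{t:Minkowski-Weyl} together with the rearrangement inequality to locate a same-sign pair whose merge remains in $C_n$. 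This full-support case is the technical heart of the proof and the principal place where new ideas beyond the finitely generated setting enter.
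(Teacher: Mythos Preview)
Your overall architecture is sound and aligns with the paper: the cycle (i)$\Rightarrow$(ii)$\Rightarrow$(iv)$\Rightarrow$(i) and the equivalence (i)$\Leftrightarrow$(v) work exactly as you describe, including the finite-approximation trick for (ii)$\Rightarrow$(iv)(a). Your argument for (iii)$\Rightarrow$(i) via the convex combination $\alpha(\vb,0)+(1-\alpha)\tau(\vb,0)$ is correct and is essentially the paper's (iii)$\Rightarrow$(iv) repackaged as a direct induction.

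The genuine gap is (i)$\Rightarrow$(iii)(b), which you yourself flag as the chief obstacle. Your zero-coordinate case is fine, but for the full-support case you only sketch a plan (duality via \Cref{t:Minkowski-Weyl} plus rearrangement) without carrying it out; it is not clear how that route pins down a specific same-sign pair whose merge lands in $C_n$. The paper does \emph{not} use duality here at all. Instead it proves (ii)$\Rightarrow$(iii)(b) by a self-contained combinatorial lemma (\Cref{l:support-reduction}): write $\ub\in C_n$ via Carath\'eodory as $\sum_{i=1}^n\lambda_i\vb_i$ with each $|\supp(\vb_i)|\le p$, set $T_j=\{i: j\in\supp(\vb_i)\}$ and $S_j=\bigcup_{i\in T_j}\supp(\vb_i)$, and use double counting $\sum_j|T_j|\le np$ plus pigeonhole (this is where the threshold $n\ge 3p^2$ enters) to locate, after permuting, three coordinates $n-2,n-1,n$ with $n-1\notin S_n$ and $n-2\notin S_{n-1}\cup S_n$. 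Among any three reals two have the same sign, so one obtains a same-sign pair $(n-1,n)$ with $n-1\notin S_n$; then applying the transposition $(n-1\ n)$ only to the summands $\vb_i$ with $i\in T_n$ yields $(u_1,\dots,u_{n-2},u_{n-1}+u_n,0)\in C_n$. This handles all $\ub$ uniformly with no zero/full-support split. If you want to complete your proof, either adopt this lemma or supply the missing details of your duality approach.
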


\enlargethispage{\baselineskip}

Given \Cref{t:finte-generation}, the main task in the proof of the theorem above is to establish the equivalence between statement (iii) and the others. To this end, the following preparatory result is needed.

\begin{lemma}
	\label{l:support-reduction}
	Let $p\ge 1$ and let $C_n\subseteq\RR^n$ be a $\Sym(n)$-invariant cone generated by elements of support size at most $p$. If $n\ge3p^2$, then for any $\ub=(u_1,\dots,u_n)\in C_n$, there exists $\sigma\in \Sym(n)$ such that 
	\[
	u_{\sigma(n-1)}u_{\sigma(n)}\ge0
	\quad\text{and}\quad
	(u_{\sigma(1)},\dots,u_{\sigma(n-2)},u_{\sigma(n-1)}+u_{\sigma(n)})
	\in C_{n}.
	\]
\end{lemma}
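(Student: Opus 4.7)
The plan is to exhibit indices $k, l \in [n]$ with $u_k u_l \ge 0$ such that the \emph{merged vector} $\ub^\natural \in \RR^n$---defined by $(\ub^\natural)_k = u_k + u_l$, $(\ub^\natural)_l = 0$, and $(\ub^\natural)_j = u_j$ for $j \ne k, l$---lies in $C_n$. Any $\sigma \in \Sym(n)$ with $\sigma(n-1) = k$ and $\sigma(n) = l$ then satisfies the conclusion, since the vector $(u_{\sigma(1)}, \dots, u_{\sigma(n-2)}, u_{\sigma(n-1)} + u_{\sigma(n)}) \in \RR^{n-1} \subseteq \RR^n$ coincides with $\sigma^{-1}(\ub^\natural)$, which lies in $C_n$ by $\Sym(n)$-invariance. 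The case $\supp(\ub) \ne [n]$ is immediate: pick any $l$ with $u_l = 0$ and any $k \ne l$; then $u_k u_l = 0$ and $\ub^\natural = \ub \in C_n$.

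Henceforth assume $\supp(\ub) = [n]$. Fix a decomposition $\ub = \sum_{i=1}^s \vb_i$ where each $\vb_i \in C_n$ is a positive scalar multiple of a generator, so $|\supp(\vb_i)| \le p$; by Caratheodory's theorem for cones we may arrange $s \le n$.

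The key structural claim is: \emph{if $\{k, l\}$ is not contained in $\supp(\vb_i)$ for any $i$, then $\ub^\natural \in C_n$.} For each $i$, set $\widetilde{\vb}_i := (k\,l) \cdot \vb_i$ (the action of the transposition of $k$ and $l$) when $l \in \supp(\vb_i)$---in which case necessarily $k \notin \supp(\vb_i)$---and $\widetilde{\vb}_i := \vb_i$ otherwise. Each $\widetilde{\vb}_i$ belongs to $C_n$ by $\Sym(n)$-invariance, and a coordinate-by-coordinate check confirms $\sum_i \widetilde{\vb}_i = \ub^\natural$: the indices $i$ with $k \in \supp(\vb_i)$ contribute $(\vb_i)_k$ at position $k$ (summing to $u_k$), those with $l \in \supp(\vb_i)$ contribute $(\vb_i)_l$ at position $k$ after the swap (summing to $u_l$), position $l$ receives only zeros, and positions outside $\{k, l\}$ are unaffected.

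It remains to produce a pair $\{k, l\}$ with $u_k u_l > 0$ avoiding every $\supp(\vb_i)$. Set $P = \{j : u_j > 0\}$ and $N = \{j : u_j < 0\}$, so $|P| + |N| = n$. Convexity of $x \mapsto \binom{x}{2}$ gives $\binom{|P|}{2} + \binom{|N|}{2} \ge n(n-2)/4$ for the number of same-sign pairs, whereas the number of ``bad'' pairs (those contained in some $\supp(\vb_i)$) is at most $\sum_i \binom{|\supp(\vb_i)|}{2} \le s \binom{p}{2} \le np(p-1)/2$. Since $n \ge 3p^2$ implies $n(n-2)/4 > np(p-1)/2$ (equivalently $n > 2p^2 - 2p + 2$, which holds for all $p \ge 1$), a same-sign non-bad pair exists. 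The main obstacle is the structural claim---verifying that the substitution $\vb_i \mapsto \widetilde{\vb}_i$ reassembles into a $C_n$-decomposition of $\ub^\natural$---after which the Caratheodory reduction and the pair count are routine.
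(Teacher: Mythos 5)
Your proof is correct, and the ``merge'' step (producing $\ub^\natural$ by transposing one position of each $\vb_i$ whose support contains $l$) is essentially identical to the paper's. The genuine difference is in how the two arguments locate a good pair. The paper first permutes coordinates so that $|T_n|\le p$ (an averaging bound on $\sum_j|T_j|\le np$), yielding $|S_n|\le p^2$; it then finds $n-1\notin S_n$ with $|T_{n-1}|<2p$ and hence $|S_{n-1}|<2p^2$, and finally an index $n-2\notin S_{n-1}\cup S_n$; among the three pairwise separated coordinates $n-2,n-1,n$ some two carry same-sign entries. You instead run a global double count: with full support, same-sign pairs number at least $\binom{|P|}{2}+\binom{|N|}{2}\ge n(n-2)/4$ by convexity, while pairs lying inside some $\supp(\vb_i)$ number at most $s\binom{p}{2}\le np(p-1)/2$ after a Carath\'eodory reduction to $s\le n$ terms, and $n\ge 3p^2$ forces the former to exceed the latter. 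Your counting is shorter and arguably more transparent, and in fact only needs $n>2p^2-2p+2$; the paper's sequential argument is more explicit about which coordinates to permute to the end, which matches the way the stability threshold $q=3p^2$ is re-used verbatim downstream in \Cref{t:stabilizing-cones}, but either bound would serve. One small stylistic caveat: in the full-support case you insist on $u_ku_l>0$ rather than $\ge 0$, which is what makes the final split $\ub=\lambda\vb+\mu\wb$ in the proof of \Cref{t:stabilizing-cones}(iii)$\Rightarrow$(iv) go through with strictly positive weights; that strictness is available precisely because you treated $\supp(\ub)\ne[n]$ separately, so this is a feature, not a gap.
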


\begin{proof}
	By Carath\'{e}odory's theorem (see, e.g., \cite[Theorem 1.55]{BG}), we can write $\ub=\sum_{i=1}^n\lambda_i \vb_i$ with $\lambda_i\ge0$, $\vb_i\in C_n$, and moreover,  $|\supp(\vb_i)|\le p$ for all $i\in[n]$ by assumption. For each $j\in[n]$, set
	\[
	T_j=\{i\in[n]\mid j\in\supp(\vb_i)\}
	\quad\text{and}\quad
	S_j=\bigcup_{i\in T_j}\supp(\vb_i).
	\]
	Then
	\begin{equation}
		\label{e:Sj}
		|S_{j}|\le \sum_{i\in T_{j}}|\supp(\vb_i)|\le p|T_j|
		\ \text{ for all } j\in[n].
	\end{equation}
	Let us first prove the following:
	
	\begin{claim}
		\label{cl:notin}
		Up to a permutation of the coordinates of $\ub$, we have
		\[
		n-1\notin S_{n}
		\ \text{ and }\ 
		 n-2\notin S_{n-1}\cup S_{n}.
		\]
	\end{claim}
	
	\begin{proof}[Proof of \Cref{cl:notin}]
		By double counting, we obtain
		\begin{equation}
			\label{e:cardinality}
			\sum_{j=1}^n|T_j|=\sum_{i=1}^n|\supp(\vb_i)|\le np.
		\end{equation}
		Permuting the coordinates of $\ub$, if necessary, we may assume that $T_{n}$ has minimal cardinality among the sets $T_j$ for $j\in[n]$. Then \eqref{e:cardinality} yields $|T_{n}|\le p.$ It thus follows from \eqref{e:Sj} that
		$
		|S_{n}|\le  p^2.
		$
		Hence, $[n]\setminus S_{n}\ne \emptyset$ since $n\ge3p^2$. Permuting coordinates again if needed, we may assume that $n-1\in [n]\setminus S_{n}$ and that $T_{n-1}$ has minimal cardinality among the sets $T_j$ with $j\in[n]\setminus S_{n}$. Then
		\[
		(n-p^2)|T_{n-1}|\le (n-|S_{n}|)|T_{n-1}|\le \sum_{j\in [n]\setminus S_{n}}|T_j|
		\le \sum_{j=1}^n|T_j|\le np.
		\]
		Since $n\ge3p^2$, it is easy to check that
		\[
		|T_{n-1}|\le\frac{np}{n-p^2}< 2p.
		\]
		This together with \eqref{e:Sj} implies that
		$
		|S_{n-1}|< 2p^2.
		$
		Therefore,
		\[
		|S_{n-1}\cup S_{n}|\le |S_{n-1}|+|S_{n}|< 2p^2+p^2=3p^2\le n.
		\]
		Thus, there exists $k\in[n]\setminus (S_{n-1}\cup S_{n})$, and by permuting coordinates once more, we may assume that $k=n-2$. This completes the proof of the claim.
	\end{proof}
	
	\begin{claim}
		\label{cl:nonnegative}
		Up to a permutation of the coordinates of $\ub$, it holds that $n-1\notin S_n$ and $u_{n-1}u_n\ge0$.
	\end{claim}
	
	\begin{proof}[Proof of \Cref{cl:nonnegative}]
		By \Cref{cl:notin}, we may assume that $n-1\notin S_{n}$ and $n-2\notin S_{n-1}\cup S_{n}$. If $u_{n-1}u_n\ge0$, we are done. Otherwise, either $u_{n-2}u_n\ge0$ or $u_{n-2}u_{n-1}\ge0$ must hold. Since $n-2\notin S_n$ and $n-2\notin S_{n-1}$, the desired conclusion results from transposing $u_{n-2}$ with either $u_{n-1}$ or $u_{n}$.
	\end{proof}
	
	Let us resume the proof of the lemma. By \Cref{cl:nonnegative}, we may assume that $n-1\notin S_n$ and $u_{n-1}u_n\ge 0$. So it remains to show that
	\[
	(u_{1},\dots,u_{n-2},u_{n-1}+u_{n})
	\in C_{n}.
	\]
	Recall that 
	\[
	\ub=\sum_{i=1}^n\lambda_i \vb_i
	=\sum_{i\in T_n}\lambda_i\vb_i+\sum_{i\in [n]\setminus T_{n}}\lambda_i\vb_i.
	\]
	Since $n\notin\supp(\vb_i)$ for all $i\in[n]\setminus T_{n}$ and $n-1\notin\supp(\vb_i)$ for all $i\in T_{n}$, we can write
	\[
	\sum_{i\in T_n}\lambda_i\vb_i=(u_1',\dots,u_{n-2}',0,u_n)
	\quad\text{and}\quad
	\sum_{i\in [n]\setminus T_{n}}\lambda_i\vb_i=(u_1'',\dots,u_{n-2}'',u_{n-1},0),
	\]
	where $u_j=u_j'+u_j''$ for all $j\in[n-2]$.
	Let $\pi\in \Sym(n)$ be the transposition that swaps $n-1$ and $n$. Then
	\[
	(u_{1},\dots,u_{n-2},u_{n-1}+u_{n},0)
	=\sum_{i\in T_n}\lambda_i\pi(\vb_i)+\sum_{i\in [n]\setminus T_{n}}\lambda_i\vb_i.
	\]
	Since $C_n$ is $\Sym(n)$-invariant, it follows that
	\[
	(u_1, \dots, u_{n-2}, u_{n-1}+u_n) \in C_n,
	\]
	as required.
\end{proof}

Let us now prove \Cref{t:stabilizing-cones}.

\begin{proof}[Proof of \Cref{t:stabilizing-cones}]
	We only prove the implications (ii)$\Rightarrow$(iii)$\Rightarrow$(iv).  
	The implications (i)$\Rightarrow$(ii)$\Rightarrow$(v)$\Rightarrow$(iv)$\Rightarrow$(i) can be established similarly to the proof of \Cref{t:finte-generation}; the details are left to the reader.
	
	(ii)$\Rightarrow$(iii): Suppose $G_p\subseteq \RR^{p}$ is a $\Sym(n)$-equivariant generating set for $C_n$ for all $n\ge p$. We show that the two conditions in (iii) hold with $q=3p^2$. To prove (iii)(a), let $n\ge 3p^2$ and take $\ub\in C\cap\RR^n$. Then $\ub\in C_m$ for some $m\ge n$, and we can write
	\[
	\ub=\sum_{i=1}^t\lambda_i \sigma_i(\vb_i),
	\quad\text{where $\lambda_i\ge0, \ \vb_i\in G_p$, and $\sigma_i\in\Sym(m)$ for all $i\in[t]$.} 
	\]
	Set $G_p'=\{\vb_1,\dots,\vb_t\}$. Consider the $\Sym$-invariant chain $\C'=(C_{k}')_{k\geq 1}$ defined by
	\[
	C_{k}'=
	\begin{cases}
		\{\nub\}&\text{if } 1\le k< p,\\
		\cone(\Sym(k)(G_p'))&\text{if } k\ge p,
	\end{cases}
	\]
	and denote  $C'=\bigcup_{k\ge1}C_k'$.
	Then $G_p'$ is a finite $\Sym(k)$-equivariant generating set for $C_k'$ for all $k\ge p$. From the proof of the implication (ii)$\Rightarrow$(iii) in \Cref{t:finte-generation}, it follows that $C'\cap\RR^k=C_k'$ for all $k\ge 2p$. In particular, since $n\ge 3p^2>2p$, we have $C'\cap\RR^n=C_n'$.  Since $\ub\in C'\cap\RR^n$ and $C_n'\subseteq C_n$, we deduce that $\ub\in C_n$. Therefore, $C\cap\RR^n=C_n$, as required.
	
	To prove (iii)(b), we first note that $C_{n+1}\cap\RR^n=C_n$ for all $n\ge 3p^2$. Indeed, this follows from (iii)(a) and the obvious inclusions $C_n\subseteq C_{n+1}\cap\RR^n\subseteq C\cap\RR^n.$  Now \Cref{l:support-reduction} yields the desired conclusion.
	
	(iii)$\Rightarrow$(iv): We need to show that any element $\ub\in C_n$ with support size $s>q$ can be generated by elements of $C_n$ with smaller support sizes. Permuting the coordinates of $\ub$, if necessary, we may assume that $\ub=(u_1,\dots,u_s)$, where $\ub_i\ne0$ for all $i\in[s].$ Then we have $\ub\in C\cap\RR^s=C_s$ since $s>q$. By (iii)(b), up to a permutation of the coordinates of $\ub$, it can be assumed that $u_{s-1}u_s>0$ and $\vb=(u_{1},\dots,u_{s-2},u_{s-1}+u_{s})\in C_{s-1}$. The latter implies
	$$
	\wb=(u_{1},\dots,u_{s-2},0,u_{s-1}+u_{s})\in C_s.
	$$
	Observe that $\vb,\wb\in C_n$ (since $s=|\supp(\ub)|\le n$) and both of them have support size $s-1$. 
	Evidently, $\ub=\lambda\vb+\mu\wb$, where $\lambda=u_{s-1}/(u_{s-1}+u_s)$ and $\mu=u_{s}/(u_{s-1}+u_s)$ are positive numbers, because $u_{s-1}u_s>0$. This shows that $\ub$ is generated by elements with smaller support size, completing the proof.
\end{proof}

\Cref{t:finte-generation,t:stabilizing-cones} yield the following characterization of equivariant finite generation of positive normal $\Sym$-invariant monoids. In the case of nonnegative monoids, similar characterizations have been established in \cite[Corollary 5.13]{KLR}, \cite[Lemmas 5.1 and 6.5]{LR21}, and \cite[Theorem 3.14]{LR24}. For any element $\ub=(u_i)_{\ib\in \NN}\in \RR^{(\NN)}$ and any subset $A\subseteq\RR^{(\NN)}$, we define
\[
\|\ub\|=\sum_{\ib\in \NN}|u_i|
\quad\text{and}\quad
\|A\|=\sup\{\|\ub\|: \ub\in A\}.
\]

\begin{corollary}
	\label{c:finte-generation-monoid}
	Let $\M=(M_{n})_{n\geq 1}$ be a $\Sym$-invariant chain of positive normal affine monoids with limit $M=\bigcup_{n\geq 1}M_n.$ For each $n\ge1$, let $\Hc_{n}$ denote the Hilbert basis of $M_n$. Then the following statements  are equivalent:
	\begin{enumerate}
		\item
		$\M$ stabilizes;
		\item 
		There exists $r\in\NN$ such that $\Hc_r$ is a $\Sym(n)$-equivariant Hilbert basis for $M_n$ for all $n\ge r$;
		\item
		There exists $p\in\NN$ such that for all $n\ge p$, the following conditions hold:
		\begin{enumerate}
			\item
			$M \cap \ZZ^{n}= M_{n}$,
			\item
			$\|\Hc_m\|\le \|\Hc_n\|$ for all $m\ge n\ge p$;
		\end{enumerate}
		\item
		There exists $p\in\NN$ such that for all $n\ge p$, the following conditions hold:
		\begin{enumerate}
			\item
			$M \cap \ZZ^{n}= M_{n}$,
			\item
			For any $(u_1,\dots,u_{n+1})\in M_{n+1}$, there exists $\sigma\in \Sym(n+1)$ such that 
			\[
			u_{\sigma(n)}u_{\sigma(n+1)}\ge0
			\quad\text{and}\quad
			(u_{\sigma(1)},\dots,u_{\sigma(n-1)},u_{\sigma(n)}+u_{\sigma(n+1)})
			\in M_{n};
			\]
		\end{enumerate}
		\item
		There exists $q\in\NN$ such that for all $n\ge q$, the following conditions hold:
		\begin{enumerate}
			\item
			$M \cap \ZZ^{n}= M_{n}$,
			\item
			$M_{n}$ is generated by elements of support size at most $q$;
		\end{enumerate}
		\item
		$M$ is $\Sym$-equivariantly finitely generated.
	\end{enumerate}
\end{corollary}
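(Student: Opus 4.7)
The plan is to deduce the six equivalences from the cone-theoretic Theorems~\ref{t:finte-generation} and~\ref{t:stabilizing-cones} applied to the associated chain of cones $\C_\M = (\cone(M_n))_{n \ge 1}$, using the normality identity $M_n = \cone(M_n) \cap \ZZ^n$ to translate between the monoid and cone settings. Since each $\cone(M_n)$ is finitely generated (by the Hilbert basis $\Hc_n$), $\C_\M$ satisfies the hypotheses of Theorem~\ref{t:finte-generation}.

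I would first establish the cycle (ii) $\Rightarrow$ (v) $\Rightarrow$ (vi) $\Rightarrow$ (i) $\Rightarrow$ (ii). The implications (ii) $\Rightarrow$ (v) $\Rightarrow$ (vi) are routine once one notes that equivariant Hilbert bases are finite with bounded support. The critical direction is (vi) $\Rightarrow$ (i): given a finite $\Sym$-equivariant generating set $G \subseteq M_p$ for $M$, one may replace $G$ by the Hilbert basis $\Hc_p$ of $M_p$ and apply Theorem~\ref{t:finte-generation} to $\C_\M$, obtaining cone stabilization at some $r \ge p$ together with $\cone(M) \cap \RR^n = \cone(M_n)$ for all $n \ge r$; normality then yields
\[
M_n = \cone(M_n) \cap \ZZ^n = \cone(\Sym(n)(M_r)) \cap \ZZ^n.
\]
The remaining task is to upgrade this to the monoid identity $M_n = \mn(\Sym(n)(M_r))$, which uses that any $\ub \in M_n$ already admits a $\Sym$-equivariant decomposition from $\Hc_r$ in $M$, and a finite-dimensional Carath\'eodory-type argument inside the rational polyhedral cone $\cone(\Sym(n)(M_r))$ lets one relocate all summands inside $\RR^n$. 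The implication (i) $\Rightarrow$ (ii) then follows because any $\ub \in \Hc_r$ remains irreducible in each $M_n$ with $n \ge r$: a decomposition $\ub = \vb + \wb$ in $M_n$ restricts, after a support-preserving $\Sym(n)$-permutation, to a decomposition in $M_r$.

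The equivalence (iv) $\Leftrightarrow$ (v) follows from the corresponding equivalence at the cone level (Theorem~\ref{t:stabilizing-cones}, conditions (iii) $\Leftrightarrow$ (iv)), since the sum-reduction condition in (iv)(b) concerns integer vectors in $M_{n+1}$ and $M_n$ that transfer unchanged to their cones by normality. Finally, (iii) $\Leftrightarrow$ (v) rests on the elementary estimate $|\supp(\ub)| \le \|\ub\|$ for any nonzero integer vector: a uniform bound $\|\Hc_n\| \le \|\Hc_p\|$ valid for all $n \ge p$ forces a uniform support-size bound on Hilbert basis elements (hence on the monoid generators they span), yielding (v)(b); conversely, stabilization at $r$ implies $\Hc_n \subseteq \Sym(n)(\Hc_r)$ for $n \ge r$, so $\|\Hc_n\| = \|\Hc_r\|$ is eventually constant. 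The main technical obstacle of the proof is the monoid-level upgrade within (vi) $\Rightarrow$ (i), since cone stabilization alone need not force $\mn(\Sym(n)(M_r))$ to be normal for arbitrary normal $M_r$; the argument circumvents this by using the global equivariant finite generation of $M$ in conjunction with the $\Sym$-orbit structure of the single finite set $\Hc_r$.
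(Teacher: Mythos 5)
Your overall plan---reduce to the cone theorems via normality, prove a cycle, and tie in (iii) and (iv) by auxiliary equivalences---is the same general strategy as the paper, and several of your sub-arguments ((ii)$\Rightarrow$(vi) trivially, (v)$\Rightarrow$(i) by relocating bounded-support generators, (iii)$\Rightarrow$(v) via $|\supp(\ub)|\le\|\ub\|$) match the paper's proof. However, there is a genuine gap in the two places where the monoid problem does not reduce to the cone problem, and these are precisely where the paper invests its technical effort.

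First, your claim that (iv)$\Leftrightarrow$(v) ``follows from the corresponding equivalence at the cone level'' does not hold up. The cone-level equivalence (Theorem~\ref{t:stabilizing-cones}, (iii)$\Leftrightarrow$(iv)) delivers a bound on the support size of \emph{cone generators} of $C_n$, not on the \emph{Hilbert basis} elements of $M_n=C_n\cap\ZZ^n$; these are different objects, and bounded-support cone generators do not automatically force bounded-support Hilbert bases. The example at the end of Section~\ref{sec:equi-Gordan} shows this concretely: the cone $C_n$ is generated $\Sym(n)$-equivariantly by the single vector $(-1,a)$ of support size $2$, but $\Hc_n$ contains $(-1,1^{(a)})$ of support size $a+1$. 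Deducing \emph{some} uniform bound on $\|\Hc_n\|$ (and hence on support sizes) from the sum-reduction property is exactly what the paper's implication (iv)$\Rightarrow$(iii) accomplishes, by a genuinely monoid-level argument: one takes $\ub\in\Hc_{n+1}$ of maximal norm, folds two coordinates to obtain $\ub'\in M_n$ with $\|\ub'\|=\|\ub\|$, and shows $\ub'$ is irreducible in $M_n$ by using Lemma~\ref{l:between-monoid} to push a hypothetical decomposition $\ub'=\vb'+\wb'$ back up into a decomposition of $\ub$. Your proposal contains no analogue of this step.

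Second, your direct (vi)$\Rightarrow$(i) via a ``finite-dimensional Carath\'eodory-type argument'' in $\cone(\Sym(n)(M_r))$ is too vague to close the gap you yourself identify. Carath\'eodory reduces the number of conical summands but the coefficients it produces need not be integers, and for monoids there is no general integral Carath\'eodory relocation available (this is a notoriously subtle property for normal monoids). Writing $\ub\in M_n$ as a $\ZZ_{\ge0}$-combination $\sum m_i\sigma_i(\hb_i)$ with $\hb_i\in\Hc_r$ and $\sigma_i\in\Sym(N)$, $N>n$, one cannot simply ``relocate'' the summands into $\ZZ^n$; the permutations need not preserve the index set $[n]$, and nothing in the cone geometry forces an integer recombination to exist there. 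The paper sidesteps this entirely: after getting $M\cap\ZZ^n=M_n$ from the cone theorem ((vi)$\Rightarrow$(iv)(a)), it passes through the norm-monotonicity argument ((iv)$\Rightarrow$(iii)) to obtain the bounded-support property ((iii)$\Rightarrow$(v)), and only then concludes stabilization by the easy ((v)$\Rightarrow$(i)) relocation of generators that are \emph{already known} to have small support. Without the norm-monotonicity step or some substitute for it, your cycle does not close.
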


To prove this corollary, we recall the following auxiliary result from \cite[Corollary 6.4]{LR21}.

\begin{lemma}
	\label{l:between-monoid}
	Let $M_n\subseteq\RR^n$ be a $\Sym(n)$-invariant normal monoid. If $(u_1,\dots,u_n)\in M_n$ and $v_{n-1},v_{n}$ lie in the interval with the endpoints $u_{n-1}$ and $u_n$ such that $v_{n-1}+v_{n}=u_{n-1}+u_n$, then $(u_1,\dots,u_{n-2},v_{n-1},v_n)\in M_n.$
\end{lemma}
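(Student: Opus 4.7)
The plan is to realize the target vector as $1/d$ times a nonnegative integer combination of $\ub=(u_1,\dots,u_n)$ and its image under the transposition swapping the last two coordinates, and then cancel the factor $1/d$ using normality.

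First, I would handle the degenerate case $u_{n-1}=u_n$: the sum and interval constraints force $v_{n-1}=v_n=u_{n-1}$, and there is nothing to prove. Otherwise, after possibly applying $\Sym(n)$-symmetries (the transposition swapping positions $n-1$ and $n$, plus swapping $v_{n-1}\leftrightarrow v_n$, which leaves the target vector invariant up to the same transposition), I may assume $u_{n-1}<u_n$ and $u_{n-1}\le v_{n-1}\le v_n\le u_n$. Set $d\defas u_n-u_{n-1}\ge 1$ and $t\defas v_{n-1}-u_{n-1}$. Then $t\in\ZZ$, $0\le t\le d$, and the sum constraint gives $v_n=u_n-t$.

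Second, let $\tau\in\Sym(n)$ be the transposition of $n-1$ and $n$. Since $M_n$ is $\Sym(n)$-invariant, $\tau(\ub)=(u_1,\dots,u_{n-2},u_n,u_{n-1})\in M_n$. I would then compute the $\ZZ_{\ge 0}$-combination $(d-t)\ub+t\,\tau(\ub)$. For $i\le n-2$ the $i$-th coordinate is $(d-t+t)u_i=du_i$; the $(n-1)$-th coordinate is
\[
(d-t)u_{n-1}+tu_n=du_{n-1}+t(u_n-u_{n-1})=d(u_{n-1}+t)=dv_{n-1},
\]
and symmetrically the last coordinate equals $dv_n$. Hence
\[
(d-t)\ub+t\,\tau(\ub)=d\cdot(u_1,\dots,u_{n-2},v_{n-1},v_n),
\]
and the left-hand side lies in $M_n$ because $M_n$ is closed under sums and nonnegative integer multiples.

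Third, since $d\ge 1$ and $M_n$ is normal, the definition of the normalization gives
\[
(u_1,\dots,u_{n-2},v_{n-1},v_n)\in\widehat{M_n}=M_n,
\]
which is exactly the claim. There is no real obstacle: the only thing to notice is that the target vector is the convex combination $\tfrac{d-t}{d}\ub+\tfrac{t}{d}\tau(\ub)$, whose denominator $d$ is precisely what normality allows one to clear, so the proof reduces to the one-line identity above together with the closure properties of $M_n$.
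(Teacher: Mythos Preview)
Your argument is correct. The paper does not prove this lemma but merely cites it as \cite[Corollary~6.4]{LR21}, so there is no in-paper proof to compare against; your approach---writing the target vector as the convex combination $\tfrac{d-t}{d}\ub+\tfrac{t}{d}\tau(\ub)$, clearing the denominator $d$ to land in $M_n$, and invoking normality---is exactly the natural one and is essentially the argument given in the cited reference. One small remark: the lemma as stated does not explicitly require $v_{n-1},v_n\in\ZZ$, but since monoids in this paper live in $\ZZ^n$ the conclusion forces this, and your proof tacitly (and correctly) uses it when asserting $t\in\ZZ$.
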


\begin{proof}[Proof of \Cref{c:finte-generation-monoid}]
Let $C=\cone(M)$ and $C_n=\cone(M_n)$ for $n\ge 1.$ Then $\C=(C_{n})_{n\geq 1}$ forms a $\Sym$-invariant chain of cones whose limit is $C$. Moreover, since $M_n$ is a normal monoid, we have $M_n=C_n\cap \ZZ^{n}$ for $n\ge1$. It follows that  $M=C\cap \ZZ^{(\NN)}$.
\smallskip

(vi)$\Rightarrow$(iv): Suppose $M$ is $\Sym$-equivariantly finitely generated. Then the cone $C=\cone(M)$ is also $\Sym$-equivariantly finitely generated. By the equivalence of statements (iii) and (v) in \Cref{t:stabilizing-cones}, it follows that $C\cap\RR^n=C_n$ for all $n\gg0$. Therefore,
\[
M \cap \ZZ^{n}=C\cap  \ZZ^{n}= C_n\cap  \ZZ^{n}=M_{n}
\]
for all $n\gg0$, which establishes part (iv)(a). Part (iv)(b) follows readily from statement (iii)(b) of \Cref{t:stabilizing-cones}, together with the fact that $M_n=C_n\cap \ZZ^{n}$ for all $n\ge1$.
\smallskip

(iv)$\Rightarrow$(iii): It is enough to show that $\|\Hc_{n+1}\|\le \|\Hc_n\|$ for all $n\ge p$. Since $M_{n+1}$ is $\Sym(n+1)$-invariant, so is its Hilbert basis $\Hc_{n+1}$. Let $\ub=(u_1,\dots,u_{n+1})\in \Hc_{n+1}$ be such that $\|\ub\|=\|\Hc_{n+1}\|.$ By permuting the coordinates of $\ub$, we may assume that $u_nu_{n+1}\ge0$ and 
$\ub'\defas(u_1,\dots,u_{n-1},u_n+u_{n+1})\in M_n.$
Note that $\|\ub'\|=\|\ub\|$ since $u_nu_{n+1}\ge0$. So to conclude the implication, it suffices to show that $\ub'\in \Hc_n$. Suppose, for the sake of contradiction, that $\ub'=\vb'+\wb'$ for some $\vb', \wb'\in M_n\setminus\{\nub\}$. Write $\vb'=(v_1,\dots,v_n)$ and $\wb'=(w_1,\dots,w_n)$. Then $u_n+u_{n+1}=v_n+w_n$, and the condition $u_nu_{n+1}\ge0$ implies
\[
|u_n|+|u_{n+1}|=|u_n+u_{n+1}|=|v_n+w_n|\le |v_n|+|w_{n}|.
\]
Without loss of generality, suppose that $|u_n|\le|u_{n+1}|$ and $|w_{n}|\le |v_n|$. Then $|u_{n}|\le |v_n|$. It must also hold that $u_nv_n\ge0$, because
\[
u_nv_n+u_nw_n=u_n(u_n+u_{n+1})\ge0
\quad\text{and}\quad
|u_nw_{n}|\le |u_nv_n|.
\]
Thus, both $u_n$ and $v_n-u_n$ lie between $0$ and $v_n$. Since $\vb'=(v_1,\dots,v_n,0)\in M_{n+1}\setminus\{\nub\}$, it follows from \Cref{l:between-monoid} that $\vb\defas(v_1,\dots,v_{n-1},u_n,v_n-u_n)\in M_{n+1}\setminus\{\nub\}.$ Moreover, we have $\wb\defas(w_1,\dots,w_{n-1},0,w_n)\in M_{n+1}\setminus\{\nub\}$ because $\wb'=(w_1,\dots,w_n,0)\in M_{n+1}\setminus\{\nub\}.$ Finally, since $\ub'=\vb'+\wb'$, we obtain $\ub=\vb+\wb$, contradicting the fact that $\ub\in \Hc_{n+1}.$ Therefore, $\ub'\in \Hc_n$, and the desired inequality follows.
\smallskip

(iii)$\Rightarrow$(v): Let $q=\max\{p,\|\Hc_p\|\}$. Then for any $\ub\in \Hc_n$ with $n\ge q$, we have
\[
|\supp(\ub)|\le \|\ub\|\le \|\Hc_n\|\le \|\Hc_p\|\le q.
\] 
Therefore, $M_{n}$ is generated by elements of support size at most $q$ for all $n\ge q$.
\smallskip

(v)$\Rightarrow$(i): This implication follows by an argument analogous to that used in the proof of (iii)$\Rightarrow$(i) in \Cref{t:finte-generation}, and we omit the details here.
\smallskip

(i)$\Rightarrow$(ii):  This is immediate from the definition of stabilization.
\end{proof}

To conclude this section, we propose the following problem, which points to a promising direction for future research.

\begin{problem}
	\label{pb:local-global-monoids}
	Investigate the extent to which \Cref{c:finte-generation-monoid} can be generalized to $\Sym$-invariant chains of monoids that are not necessarily positive or normal.
\end{problem}


\section{Non-pointed symmetric cones and non-positive symmetric monoids}
\label{sec-non-pointed-cones}

In this section, we continue the preparatory work for the proof of the equivariant Gordan's lemma by classifying non-positive $\Sym$-invariant normal monoids, thereby complementing the treatment of positive normal monoids given in \Cref{c:finte-generation-monoid}. This result is derived from a classification of non-pointed $\Sym$-invariant cones, which is also of independent interest. Local versions of both classification results are included, providing further instances where local and global settings exhibit distinct behavior.

\subsection{Non-pointed symmetric cones}

The \emph{lineality space} of a cone $C\subseteq\RR^{(\NN)}$ is defined as
\[
\lin(C)=\{\ub\in C\mid -\ub\in C\}.
\]
This is the largest vector subspace contained in $C$. We say that $C$ is \emph{pointed} if $\lin(C)=\{0\}.$ 
Our goal here is to classify all non-pointed $\Sym$-invariant cones in $\RR^{(\NN)}$. To state the result, we introduce the following notation:
for any $\ub=(u_i)_{i\in\NN}\in\RR^{(\NN)}$, let 
$s(\ub)=\sum_{i\in\NN}u_i$ denote the sum of its entries. Note that $s(\ub)$ is well-defined since $\ub$ has finite support. 

\begin{theorem}
	\label{t:non-pointed-cones}
	There are precisely four non-pointed $\Sym$-invariant cones in $\RR^{(\NN)}$, namely,
	\begin{align*}
		\mathfrak{C}_1&=\RR^{(\NN)},\\
		\mathfrak{C}_2&=\{\ub\in\RR^{(\NN)}\mid s(\ub)=0\}=\RR\Sym(\eb_1-\eb_2),\\
		\mathfrak{C}_3&=\{\ub\in\RR^{(\NN)}\mid s(\ub)\ge0\}
		=\RR_{\ge0}^{(\NN)}+\RR\Sym(\eb_1-\eb_2),\\
		\mathfrak{C}_4&=\{\ub\in\RR^{(\NN)}\mid s(\ub)\le0\}
		=\RR_{\le0}^{(\NN)}+\RR\Sym(\eb_1-\eb_2).
	\end{align*}
	In particular, every non-pointed $\Sym$-invariant cone in $\RR^{(\NN)}$ is $\Sym$-equivariantly finitely generated.
\end{theorem}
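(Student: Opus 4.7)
The plan is to first verify that the four listed sets satisfy the claim, then establish a classification of $\Sym$-invariant linear subspaces of $\RR^{(\NN)}$ as a key lemma, and finally analyze any non-pointed $\Sym$-invariant cone $C$ by pulling back via the sum functional $s$.

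For the first step, I would check that each of $\mathfrak{C}_1,\ldots,\mathfrak{C}_4$ is visibly a $\Sym$-invariant cone containing the nonzero line $\RR(\eb_1-\eb_2)$, hence non-pointed, and that they are mutually distinct (distinguished by the set $s(C)$). The alternative descriptions follow from two observations: (a) $\sigma(\eb_1-\eb_2)=\eb_{\sigma(1)}-\eb_{\sigma(2)}$, so $\RR\Sym(\eb_1-\eb_2)\subseteq\kernel(s)$, and conversely any $\ub$ with $s(\ub)=0$ is a finite $\RR$-combination of differences $\eb_i-\eb_j$; (b) the decomposition $\ub = s(\ub)\eb_1+(\ub-s(\ub)\eb_1)$ exhibits any $\ub$ with $s(\ub)\ge 0$ as an element of $\RR_{\geq 0}^{(\NN)}$ plus an element of $\mathfrak{C}_2$, with the symmetric statement for $\mathfrak{C}_4$. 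Finite $\Sym$-equivariant generation is then immediate, with generating sets $\{\pm\eb_1\}$, $\{\pm(\eb_1-\eb_2)\}$, $\{\eb_1,\pm(\eb_1-\eb_2)\}$, $\{-\eb_1,\pm(\eb_1-\eb_2)\}$ for $\mathfrak{C}_1$, $\mathfrak{C}_2$, $\mathfrak{C}_3$, $\mathfrak{C}_4$ respectively.

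The heart of the proof is the following classification, which I would record as a standalone lemma: the only $\Sym$-invariant linear subspaces of $\RR^{(\NN)}$ are $\{\nub\}$, $\mathfrak{C}_2$, and $\RR^{(\NN)}$. Given a nonzero such $V$, pick $\nub\neq\ub\in V$. If $|\supp(\ub)|=1$, say $\ub=c\eb_i$, then $\Sym$-invariance supplies $c\eb_k\in V$ for every $k$, forcing $V=\RR^{(\NN)}$. Otherwise choose $i\in\supp(\ub)$ and $k\in\NN\setminus\supp(\ub)$ (possible since $\supp(\ub)$ is finite), and apply the transposition $\sigma=(i\ k)$: a direct computation gives $\sigma(\ub)-\ub = u_i(\eb_k-\eb_i)\in V$, hence $\eb_k-\eb_i\in V$, and by $\Sym$-invariance $\mathfrak{C}_2=\Span\{\eb_p-\eb_q : p\neq q\}\subseteq V$. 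Since $\mathfrak{C}_2=\kernel(s)$ has codimension one in $\RR^{(\NN)}$, either $V\subseteq\mathfrak{C}_2$ (whence $V=\mathfrak{C}_2$) or some element of $V$ has nonzero sum (whence $V=\RR^{(\NN)}$).

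With the lemma established, let $C$ be any non-pointed $\Sym$-invariant cone. Then $\lin(C)$ is a nonzero $\Sym$-invariant subspace, so by the lemma $\lin(C)\in\{\mathfrak{C}_2,\RR^{(\NN)}\}$. If $\lin(C)=\RR^{(\NN)}$, then $C=\RR^{(\NN)}=\mathfrak{C}_1$. If $\lin(C)=\mathfrak{C}_2\subseteq C$, then for any $\vb$ with $s(\vb)\in s(C)$, choosing $\ub\in C$ with $s(\ub)=s(\vb)$ and writing $\vb = \ub+(\vb-\ub)$ with $\vb-\ub\in\mathfrak{C}_2\subseteq C$ shows $\vb\in C$, whence $C=s^{-1}(s(C))$. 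The image $s(C)\subseteq\RR$ is a cone in $\RR$, so lies in $\{\{0\},\RR_{\geq 0},\RR_{\leq 0},\RR\}$, producing $C\in\{\mathfrak{C}_2,\mathfrak{C}_3,\mathfrak{C}_4,\mathfrak{C}_1\}$ respectively. The main obstacle is the subspace classification lemma; the essential trick there is exploiting the finiteness of supports to find a fresh index $k\notin\supp(\ub)$ and thereby manufacture a basis difference $\eb_k-\eb_i$ in $V$. Once this lemma is in hand, the reduction to the four cones via $s$ is essentially mechanical.
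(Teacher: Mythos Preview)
Your proof is correct, and the overall architecture---verify the four cones, classify $\Sym$-invariant subspaces, then analyze $\lin(C)$---matches the paper's. The execution of the last two steps, however, is genuinely different and somewhat more economical.

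For the subspace classification, the paper's \Cref{l:subspace} uses the same transposition idea to obtain $\mathfrak{C}_2\subseteq L$, but to conclude that $L\in\{\mathfrak{C}_2,\RR^{(\NN)}\}$ it invokes a separate cone-theoretic lemma (\Cref{l:big-cone}): a $\Sym$-invariant cone equals $\RR^{(\NN)}$ iff it contains elements with $s$-values of both signs, proved by symmetrizing over $\Sym(n)$ to produce $\pm\epb_n\in C$ and hence $\pm\eb_n=\pm(\epb_n-\epb_{n-1})\in C$. You sidestep this entirely by observing that $\mathfrak{C}_2=\kernel(s)$ has codimension one. For the final step, the paper again appeals to \Cref{l:big-cone} to show $C\subseteq\mathfrak{C}_3$ (say), and then explicitly exhibits $s(\ub)\eb_1=\ub+\sum_i u_i(\eb_1-\eb_i)\in C$ to obtain $\mathfrak{C}_3\subseteq C$. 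Your argument---once $\mathfrak{C}_2\subseteq C$, the cone is saturated under $\kernel(s)$, so $C=s^{-1}(s(C))$ and one need only list the cones in $\RR$---is cleaner and avoids the averaging lemma altogether. The trade-off is that the paper's \Cref{l:big-cone} is a reusable characterization of $\RR^{(\NN)}$ among $\Sym$-invariant cones (and is reused in the local analogue, \Cref{p:non-pointed-local-cones}), whereas your shortcut is tailored to this particular theorem. One cosmetic point: in your case $\lin(C)=\mathfrak{C}_2$, the option $s(C)=\RR$ cannot actually occur (it would force $\lin(C)=\RR^{(\NN)}$), so listing it is harmless but redundant.
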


Before proceeding to the proof of this result, let us verify the equivalence of the two descriptions given for each of the cones $\mathfrak{C}_2,\mathfrak{C}_3,\mathfrak{C}_4$. Observe that for all $\ub,\vb\in \RR^{(\NN)}$, $\sigma\in\Sym$, and $\lambda, \mu\in \RR$, the following identities hold:
\[
s(\sigma(\ub))= s(\ub)
\quad\text{and}\quad
s(\lambda\ub+\mu\vb)=\lambda s(\ub)+\mu s(\vb).
\]
It follows that the second description of each cone is contained in the first. For the reverse inclusion, one only needs to check that
\[
\ub=s(\ub)\eb_1+\sum_{i\in\NN}u_i(\eb_i-\eb_1)
\]
for any $\ub=(u_i)_{i\in\NN}\in\RR^{(\NN)}$.

\Cref{t:non-pointed-cones} yields the following classification of   $\Sym$-invariant vector subspaces of $\RR^{(\NN)}$.

\begin{proposition}
	\label{p:sym-space}
	With notation as in \Cref{t:non-pointed-cones}, the only nonzero $\Sym$-invariant vector subspaces of $\RR^{(\NN)}$ are $\mathfrak{C}_1$ and $\mathfrak{C}_2$.
\end{proposition}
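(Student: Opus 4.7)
The plan is to deduce this classification of $\Sym$-invariant vector subspaces directly from the preceding classification of non-pointed $\Sym$-invariant cones (\Cref{t:non-pointed-cones}). The key observation is that every nonzero vector subspace is automatically a non-pointed cone, so the list in \Cref{t:non-pointed-cones} contains all candidates; it then remains only to eliminate those candidates which fail to be closed under negation.

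More precisely, let $V\subseteq\RR^{(\NN)}$ be a nonzero $\Sym$-invariant vector subspace. Then $V$ is in particular a $\Sym$-invariant cone, and since $V$ is a subspace we have $\lin(V)=V\ne\{\nub\}$, so $V$ is non-pointed. Applying \Cref{t:non-pointed-cones}, we obtain
\[
V\in\{\mathfrak{C}_1,\mathfrak{C}_2,\mathfrak{C}_3,\mathfrak{C}_4\}.
\]
Both $\mathfrak{C}_1=\RR^{(\NN)}$ and $\mathfrak{C}_2=\{\ub\in\RR^{(\NN)}\mid s(\ub)=0\}$ are easily seen to be vector subspaces (closed under addition and arbitrary scalar multiplication, using that $s$ is linear). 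So it only remains to rule out $\mathfrak{C}_3$ and $\mathfrak{C}_4$.

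This last step is essentially a one-line check, and is the only step worth carrying out with care: $\eb_1\in\mathfrak{C}_3$ but $s(-\eb_1)=-1<0$, so $-\eb_1\notin\mathfrak{C}_3$, showing that $\mathfrak{C}_3$ is not closed under negation and hence is not a vector subspace. Symmetrically, $-\eb_1\in\mathfrak{C}_4$ while $\eb_1\notin\mathfrak{C}_4$, so $\mathfrak{C}_4$ is not a vector subspace either. Therefore $V\in\{\mathfrak{C}_1,\mathfrak{C}_2\}$, which completes the proof. No real obstacle arises here; the work has already been done in establishing \Cref{t:non-pointed-cones}, and this proposition is simply an immediate corollary.
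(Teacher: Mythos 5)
Your argument is circular as written. You deduce \Cref{p:sym-space} from \Cref{t:non-pointed-cones}, but in the paper the logical dependency runs the other way: the text explicitly notes that ``the proof of \Cref{t:non-pointed-cones} will rely on \Cref{p:sym-space}. Therefore, we first establish the latter.'' Indeed, the paper's proof of \Cref{t:non-pointed-cones} begins by taking a non-pointed $\Sym$-invariant cone $C\subsetneq\RR^{(\NN)}$, observing that $\lin(C)$ is a nonzero proper $\Sym$-invariant subspace, and then invoking \Cref{p:sym-space} to conclude $\lin(C)=\mathfrak{C}_2$. So \Cref{t:non-pointed-cones} is not available to you when proving \Cref{p:sym-space}, and your ``immediate corollary'' presupposes exactly the statement you are asked to prove.

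To make the argument non-circular you would have to prove the classification of non-pointed cones independently of \Cref{p:sym-space}, which is a harder task than the proposition itself. The paper's actual proof of \Cref{p:sym-space} is self-contained and short: given a nonzero $\Sym$-invariant subspace $L\subsetneq\RR^{(\NN)}$, it picks a nonzero $\ub\in L$, notes $\ub$ has a nonzero coordinate and a zero coordinate (since supports are finite), and applies \Cref{l:subspace} to get $\eb_1-\eb_2\in L$, hence $\mathfrak{C}_2\subseteq L$; conversely, for any $\vb\in L$ both $\vb$ and $-\vb$ lie in $L$, so \Cref{l:big-cone} forces $s(\vb)=0$ (else $L=\RR^{(\NN)}$), giving $L\subseteq\mathfrak{C}_2$. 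You should argue along these lines, using \Cref{l:big-cone} and \Cref{l:subspace} directly rather than \Cref{t:non-pointed-cones}. The final part of your write-up (ruling out $\mathfrak{C}_3,\mathfrak{C}_4$ via $\pm\eb_1$) is fine but becomes unnecessary once the direct argument is in place.
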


In fact, the proof of \Cref{t:non-pointed-cones} will rely on \Cref{p:sym-space}. Therefore, we first establish the latter. To this end, we require the following characterization of $\RR^{(\NN)}$ as a $\Sym$-invariant cone.

\begin{lemma}
	\label{l:big-cone}
	Let $C\subseteq\RR^{(\NN)}$ be a $\Sym$-invariant cone. Then the following statements are equivalent:
	\begin{enumerate}
		\item
		$C=\RR^{(\NN)}$;
		\item 
		There exist $\ub,\vb\in C$ such that $s(\ub)>0$
		and $s(\vb)<0$.
	\end{enumerate}
\end{lemma}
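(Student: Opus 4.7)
The implication (i)$\Rightarrow$(ii) is immediate by choosing $\ub = \eb_1$ and $\vb = -\eb_1$, so the content lies in (ii)$\Rightarrow$(i). My plan is to exploit the $\Sym$-invariance together with the convex cone structure to reduce to the fact that $\pm \eb_i \in C$ for every $i$.

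First, I would extract \emph{constant vectors} from any element with nonzero coordinate sum. For any $\ub \in C$ with support contained in $[n]$ and any $N \ge n$, the sum $\sum_{\sigma \in \Sym(N)} \sigma(\ub)$ is a finite nonnegative combination of elements of $\Sym(C) \subseteq C$, hence lies in $C$. A direct calculation (each position $j \in [N]$ receives $u_i$ via exactly $(N-1)!$ permutations for each $i$, and positions $j>N$ are fixed, remaining zero) shows that this orbit sum equals $(N-1)!\, s(\ub)\,\mathbf{1}_N$, where $\mathbf{1}_N \defas \eb_1 + \cdots + \eb_N$. Assuming $s(\ub) > 0$ and $s(\vb) < 0$ as in (ii), rescaling gives $\mathbf{1}_N \in C$ for all $N \ge n$, and $-\mathbf{1}_M \in C$ for all $M \ge m$, where $n, m$ are the respective support sizes.

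Next, I would take $N_0 = \max(n,m)$ and combine these building blocks. For any $N > N_0$, both $\mathbf{1}_N$ and $-\mathbf{1}_{N_0}$ lie in $C$, so the nonnegative combination
\[
\mathbf{1}_N + (-\mathbf{1}_{N_0}) = \eb_{N_0+1} + \cdots + \eb_N
\]
is in $C$. Taking $N = N_0 + 1$ yields $\eb_{N_0+1} \in C$, and since $\Sym$ acts transitively on $\NN$ (transpositions suffice), the $\Sym$-invariance of $C$ gives $\eb_j \in C$ for every $j \in \NN$. The symmetric argument, using $-\mathbf{1}_N + \mathbf{1}_{N_0}$, yields $-\eb_j \in C$ for every $j$.

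Finally, for an arbitrary $\wb = \sum_i w_i \eb_i \in \RR^{(\NN)}$ (a finite sum), each term $w_i \eb_i$ equals either $w_i \cdot \eb_i$ or $|w_i|\cdot(-\eb_i)$, hence lies in $C$; thus $\wb \in C$, giving $C = \RR^{(\NN)}$. I do not anticipate any real obstacle; the only point requiring care is the orbit-sum computation, which must be done with proper bookkeeping of which indices are fixed by $\Sym(N)$, but this is a straightforward combinatorial check.
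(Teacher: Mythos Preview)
Your proof is correct and follows essentially the same approach as the paper's: both compute the $\Sym(N)$-orbit sum to produce the constant vectors $\pm\mathbf{1}_N$ (the paper writes $\epb_N$) for all sufficiently large $N$, then subtract consecutive ones to obtain $\pm\eb_j$ and conclude by $\Sym$-invariance. The only cosmetic difference is that the paper packages the orbit-sum step as a separate claim and explicitly permutes $\ub$ so that its support is an initial segment before summing.
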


\begin{proof}
	For each $n\ge 1$, let $\epb_n=\eb_1+\cdots+\eb_n$. 
	We first prove the following claim.
	
	\begin{claim}
		\label{c:entire-cone}
		Let $\wb\in C$ with $s(\wb)\ne0$, and denote by $\omega$ the sign of $s(\wb)$. Then 
		\[
		\omega\epb_n \in C
		\quad\text{for all }\
		n\ge|\supp(\wb)|.\]
	\end{claim}
	
	\begin{proof}[Proof of \Cref{c:entire-cone}]
		By permuting the coordinates of $\wb$, if necessary, we may assume that $\wb\in\RR^m$, where $m=|\supp(\wb)|$. For any $n\ge m$, we compute
		\[
		\frac{1}{(n-1)!}\sum_{\sigma\in\Sym(n)}\sigma(\wb)
		=s(\wb)\sum_{i=1}^n\eb_i
		=s(\wb)\epb_n.
		\]
		Since the cone $C$ is $\Sym$-invariant, it follows that $s(\wb)\epb_n\in C$, and consequently, $\omega\epb_n\in C$ for all $n\ge m$.
	\end{proof}
	
	Let us now proceed with the proof of the lemma. Evidently, only the implication (ii)$\Rightarrow$(i) requires justification. Set $l=\max\{|\supp(\ub)|,|\supp(\vb)|\}$. Then it follows from  \Cref{c:entire-cone} that $\pm \epb_n\in C$ for all $n\ge l$. Hence, for all $n\ge l+1$, we have
	\[
	\eb_n=\epb_n+(-\epb_{n-1})\in C
	\quad\text{and}\quad
	-\eb_n=\epb_{n-1}+(-\epb_n)\in C.
	\]
   Since $C$ is $\Sym$-invariant, we deduce that $\pm \eb_i\in C$ for all $i\in\NN$. Therefore,  $C=\RR^{(\NN)}$.
\end{proof}

We also need the following simple property of $\Sym$-invariant vector spaces.

\begin{lemma}
	\label{l:subspace}
	Let $L\subseteq \RR^{(\NN)}$ be a $\Sym$-invariant vector space. If there exists $\ub=(u_i)_{i\in\NN}$ with $u_j\ne u_k$ for some $k\ne j$, then $\mathfrak{C}_2\subseteq L$.
\end{lemma}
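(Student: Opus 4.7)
The plan is to reduce the problem to showing that the single element $\eb_1-\eb_2$ lies in $L$, since then $\Sym$-invariance together with the vector space structure of $L$ yields $\RR\Sym(\eb_1-\eb_2)=\mathfrak{C}_2\subseteq L$, using the equality of descriptions of $\mathfrak{C}_2$ already noted after \Cref{t:non-pointed-cones}.

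To produce $\eb_1-\eb_2$ in $L$, I would start from the given $\ub=(u_i)_{i\in\NN}\in L$ with $u_j\ne u_k$ for some $j\ne k$. Let $\sigma=(j\,k)\in\Sym$ be the transposition swapping $j$ and $k$. Since $L$ is $\Sym$-invariant, $\sigma(\ub)\in L$, and since $L$ is a vector space, the difference $\ub-\sigma(\ub)$ lies in $L$. A direct coordinate computation shows
\[
\ub-\sigma(\ub)=(u_j-u_k)(\eb_j-\eb_k),
\]
because all coordinates other than the $j$-th and $k$-th are fixed by $\sigma$, while the $j$-th and $k$-th coordinates are swapped. As $u_j-u_k\ne0$, dividing by this scalar (using that $L$ is a vector space) gives $\eb_j-\eb_k\in L$.

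Finally, choose any $\tau\in\Sym$ with $\tau(j)=1$ and $\tau(k)=2$; then $\tau(\eb_j-\eb_k)=\eb_1-\eb_2\in L$ by $\Sym$-invariance. Combined with the fact that $L$ is a $\Sym$-invariant vector space, this gives $\RR\Sym(\eb_1-\eb_2)\subseteq L$, i.e., $\mathfrak{C}_2\subseteq L$, as desired. There is no real obstacle here — the argument is a routine exploitation of the interaction of a single transposition with the vector space axioms — so I would expect the proof to be essentially a one-paragraph computation.
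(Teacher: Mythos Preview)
Your proof is correct and follows essentially the same approach as the paper: the paper first permutes coordinates so that $u_1\ne u_2$ and then subtracts the image under the transposition $(1\,2)$, whereas you subtract under $(j\,k)$ first and permute afterward, but the argument is otherwise identical.
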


\begin{proof}
	By permuting the coordinates of $\ub$, we may assume that $u_1\ne u_2$. Let $\sigma\in \Sym$ be the transposition that swaps $1$ and $2$. Then
	\[
	\sigma(\ub)=(u_2,u_1,u_3,\dots)
	\in L.
	\] 
	It follows that
	\[
	(u_1-u_2)(\eb_1-\eb_2)=\ub-\sigma(\ub)\in L,
	\]
	and consequently, $\eb_1-\eb_2\in L$. This yields $\mathfrak{C}_2=\RR\Sym(\eb_1-\eb_2)\subseteq L$, as claimed.
\end{proof}

We are ready for the proof of \Cref{p:sym-space}.

\begin{proof}[Proof of \Cref{p:sym-space}]
	Let $L\subsetneq\RR^{(\NN)}$ be any nonzero $\Sym$-invariant vector subspace. We aim to show that $L=\mathfrak{C}_2$. Take any $\ub=(u_i)_{i\in\NN}\in L\setminus\{0\}$. Then $u_i\ne0$ for some $i\in\NN$. Since $u_n=0$ for $n\gg0$, it follows from \Cref{l:subspace} that $\mathfrak{C}_2\subseteq L$. Conversely, for any $\vb\in L$, we have $-\vb\in L$ and $s(-\vb)=-s(\vb)$. Since $L\ne\RR^{(\NN)}$, \Cref{l:big-cone} implies that $s(\vb)=0$. Hence, 
	$L\subseteq \mathfrak{C}_2.$ 
	The proof is complete.
\end{proof}

Let us now prove \Cref{t:non-pointed-cones}.

\begin{proof}[Proof of \Cref{t:non-pointed-cones}]
	It suffices to prove the first assertion, as the following finite sets are  $\Sym$-equivariant generating sets for the cones $\mathfrak{C}_1,\mathfrak{C}_2,\mathfrak{C}_3,\mathfrak{C}_4$, respectively:
	\[
	\mathfrak{G}_1=\{\pm\eb_1\},\ \ \mathfrak{G}_2=\{\pm(\eb_1-\eb_2)\},
	\ \ \mathfrak{G}_3=\{\eb_1,\pm(\eb_1-\eb_2)\},
	\ \ \mathfrak{G}_4=\{-\eb_1,\pm(\eb_1-\eb_2)\}.
	\]
	Let $C\subsetneq\RR^{(\NN)}$ be any non-pointed $\Sym$-invariant cone. Then $\lin(C)$ is a nonzero proper $\Sym$-invariant vector subspace of $\RR^{(\NN)}$. So $\lin(C) =\mathfrak{C}_2$ by \Cref{p:sym-space}. It is thus enough to show that if $\lin(C) \subsetneq C$, then either $C=\mathfrak{C}_3$ or $C=\mathfrak{C}_4$. Indeed, since $\mathfrak{C}_2\subsetneq C$, there exists $\ub=(u_i)_{i\in\NN}\in C$ such that $s(\ub)\ne0$. Consider first the case $s(\ub)>0$. Then from \Cref{l:big-cone} it follows that $s(\vb)\ge0$ for all $\vb\in C$, i.e., $C\subseteq\mathfrak{C}_3$. On the other hand, we have
	\[
	s(\ub)\eb_1=\ub+\sum_{i\in\NN}u_i(\eb_1-\eb_i)\in C,
	\]
	since $\sum_{i\in\NN}u_i(\eb_1-\eb_i)\in \mathfrak{C}_2\subseteq C$.  This implies $\eb_1\in C$, and therefore,
	\[
	\mathfrak{C}_3=\RR_{\ge0}^{(\NN)}+\mathfrak{C}_2
	=\cone(\Sym(\eb_1))+\mathfrak{C}_2
	\subseteq C.
	\] 
	We have thus shown that $C=\mathfrak{C}_3$ if $s(\ub)>0$. When $s(\ub)<0$, an analogous argument yields $C=\mathfrak{C}_4$. This concludes the proof.
\end{proof}

Motivated by \Cref{t:non-pointed-cones}, it is natural to investigate the classification of non-pointed $\Sym(n)$-invariant cones in $\RR^n$ for $n\ge1$. In the case $n=1$, the only non-pointed cone is trivially the entire space $\RR$. For $n\ge2$, one might expect that the restrictions of the four global cones from \Cref{t:non-pointed-cones} would exhaust all possibilities. However, somewhat surprisingly, a fifth cone emerges. The next result may be viewed as a local counterpart of \Cref{t:non-pointed-cones}.

\begin{proposition}
	\label{p:non-pointed-local-cones}
	For $n\ge 2$, there are exactly five non-pointed $\Sym(n)$-invariant cones in $\RR^{n}$, namely,
	\begin{align*}
		\mathfrak{D}_1&=\RR^{n},\\
		\mathfrak{D}_2&=\{\ub\in\RR^{n}\mid s(\ub)=0\}=\RR\Sym(n)(\eb_1-\eb_2),\\
		\mathfrak{D}_3&=\{\ub\in\RR^{n}\mid s(\ub)\ge0\}
		=\RR_{\ge0}^{n}+\RR\Sym(n)(\eb_1-\eb_2),\\
		\mathfrak{D}_4&=\{\ub\in\RR^{n}\mid s(\ub)\le0\}
		=\RR_{\le0}^{n}+\RR\Sym(n)(\eb_1-\eb_2),\\
		\mathfrak{D}_5&=\{(a,\dots,a)\in\RR^{n}\mid a\in\RR\}
		=\RR\epb_n,
	\end{align*}
	where $\epb_n=\eb_1+\cdots+\eb_n.$
\end{proposition}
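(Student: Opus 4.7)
The plan is to follow the proof strategy of \Cref{t:non-pointed-cones}, first verifying by direct computation that the two descriptions of each $\mathfrak{D}_i$ coincide, and then classifying non-pointed cones $C\subseteq \RR^n$ according to their lineality space $\lin(C)$. The only genuinely new phenomenon relative to the global setting is the appearance of the one-dimensional $\Sym(n)$-invariant subspace $\RR\epb_n$, which has no infinite-dimensional analogue in $\RR^{(\NN)}$, and this is responsible for the extra cone $\mathfrak{D}_5$.

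Two preparatory facts are needed. First, a local counterpart of \Cref{p:sym-space}: the $\Sym(n)$-invariant subspaces of $\RR^n$ are exactly $\{0\}$, $\RR\epb_n$, $\mathfrak{D}_2$, and $\RR^n$. This can be proved by adapting \Cref{l:subspace}: a nonzero invariant subspace $L$ either contains a vector with two distinct coordinates, whence the transposition argument yields $\mathfrak{D}_2\subseteq L$, or it lies inside $\RR\epb_n$; and once $\mathfrak{D}_2\subseteq L$, either $L=\mathfrak{D}_2$ or $L$ contains some vector of nonzero coordinate sum, which by the averaging identity from the proof of \Cref{l:big-cone} forces $\epb_n\in L$ and thus $L=\RR^n$. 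Second, and this is the critical new ingredient, no nonzero $\Sym(n)$-invariant cone $C'\subseteq \mathfrak{D}_2$ can be pointed: since $s(\ub)=0$ for every $\ub\in\mathfrak{D}_2$, the averaging identity gives
\[
\sum_{\sigma\in\Sym(n)}\sigma(\ub)=(n-1)!\,s(\ub)\,\epb_n=0,
\]
so for any $\ub\in C'\setminus\{0\}$ one has $-\ub=\sum_{\sigma\ne\id}\sigma(\ub)\in C'$, hence $\ub\in\lin(C')$.

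With these in hand, the case analysis on $\lin(C)$ runs as follows. If $\lin(C)=\RR^n$ then $C=\mathfrak{D}_1$ trivially. If $\lin(C)=\mathfrak{D}_2$, the argument from \Cref{t:non-pointed-cones} applies essentially verbatim: either $C=\mathfrak{D}_2$, or some $\ub\in C$ has $s(\ub)\ne 0$; the averaging identity then puts $\epb_n$ or $-\epb_n$ in $C$, and the decomposition $\eb_1=\tfrac{1}{n}\epb_n+(\eb_1-\tfrac{1}{n}\epb_n)$ puts every $\eb_i\in C$, yielding $\mathfrak{D}_3\subseteq C$; the reverse inclusion follows because a $\vb\in C$ with $s(\vb)$ of opposite sign would force $\pm\epb_n\in C$ and hence $\epb_n\in\lin(C)=\mathfrak{D}_2$, contradicting $s(\epb_n)=n$. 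The symmetric sub-case yields $\mathfrak{D}_4$. The genuinely new case is $\lin(C)=\RR\epb_n$: set $C'=C\cap \mathfrak{D}_2$, which is $\Sym(n)$-invariant with $\lin(C')=\lin(C)\cap\mathfrak{D}_2=\{0\}$, so by the critical ingredient $C'=\{0\}$; the decomposition $\ub=\tfrac{s(\ub)}{n}\epb_n+(\ub-\tfrac{s(\ub)}{n}\epb_n)$ then writes every $\ub\in C$ as the sum of an element of $\RR\epb_n\subseteq C$ and an element of $C'=\{0\}$, yielding $C\subseteq\RR\epb_n$ and therefore $C=\mathfrak{D}_5$.

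The main obstacle is this last case, since $\mathfrak{D}_5$ has no global analogue in \Cref{t:non-pointed-cones}; the argument hinges on recognizing that $C\cap\mathfrak{D}_2$ is a pointed invariant cone in $\mathfrak{D}_2$ and is therefore forced to vanish by the averaging identity, a step that exploits the finiteness of $\Sym(n)$ in an essential way. The remaining verifications reduce to straightforward adaptations of the arguments already used for \Cref{t:non-pointed-cones}.
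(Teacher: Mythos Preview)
Your proposal is correct, and the overall architecture---classify the invariant subspaces, then branch on $\lin(C)$---matches the paper. Where you diverge is in the tools used inside the branches. The paper introduces a local big-cone lemma (\Cref{l:local-big-cone}): if a $\Sym(n)$-invariant cone contains vectors of support strictly less than $n$ with coordinate sums of both signs, it must be all of $\RR^n$. The paper then invokes this lemma both to finish the subspace classification and to handle $\lin(C)=\RR\epb_n$; in the latter case it takes $\ub\in C$ with two distinct coordinates, orders them, sets $\vb=\ub-u_1\epb_n$ and $\wb=\ub-u_n\epb_n$, and applies \Cref{l:local-big-cone} to $\vb,\wb$ to force $C=\RR^n$, a contradiction. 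Your route bypasses \Cref{l:local-big-cone} entirely: for $\lin(C)=\mathfrak{D}_2$ you rule out coordinate sums of both signs by observing that $\pm\epb_n\in C$ would put $\epb_n$ in $\lin(C)=\mathfrak{D}_2$; for $\lin(C)=\RR\epb_n$ you use the decomposition $\RR^n=\RR\epb_n\oplus\mathfrak{D}_2$ together with your averaging observation that every $\Sym(n)$-invariant cone in $\mathfrak{D}_2$ is a subspace, forcing $C\cap\mathfrak{D}_2=\{0\}$. The paper's approach keeps the parallelism with the global argument tighter and packages the ``both signs'' reasoning into a reusable lemma, while yours is more self-contained and exploits the orthogonal splitting $\RR^n=\RR\epb_n\oplus\mathfrak{D}_2$ more directly.
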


The proof of this proposition closely follows the argument used for  \Cref{t:non-pointed-cones}, with some minor modifications. Therefore, we will outline only the main steps, highlighting the necessary adjustments required for the local setting. Let us begin with a local counterpart of \Cref{l:big-cone}, which characterizes when a $\Sym(n)$-invariant cone equals the whole space.

\begin{lemma}
	\label{l:local-big-cone}
	Let $C_n\subseteq\RR^{n}$ be a $\Sym(n)$-invariant cone for some $n\ge 2$. Then the following conditions are equivalent:
	\begin{enumerate}
		\item
		$C_n=\RR^{n}$;
		\item 
		There exist $\ub,\vb\in C_n$ with $|\supp(\ub)|,|\supp(\vb)|<n$ such that $s(\ub)>0$
		and $s(\vb)<0$.
	\end{enumerate}
\end{lemma}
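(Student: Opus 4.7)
The implication (i) $\Rightarrow$ (ii) is immediate: taking $\ub = \eb_1$ and $\vb = -\eb_1$ gives elements of $C_n = \RR^n$ of support size $1 < n$ with $s(\ub) = 1$ and $s(\vb) = -1$.

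For (ii) $\Rightarrow$ (i), I would adapt the averaging technique used in \Cref{c:entire-cone} and \Cref{l:big-cone}, with a new twist that exploits the support-size hypothesis. First, averaging over the full group $\Sym(n)$ yields
\[
\frac{1}{(n-1)!}\sum_{\sigma \in \Sym(n)} \sigma(\ub) = s(\ub)\, \epb_n,
\]
so $\Sym(n)$-invariance of $C_n$ gives $s(\ub)\epb_n \in C_n$, and hence $\epb_n \in C_n$ because $s(\ub) > 0$; an identical computation with $\vb$ produces $-\epb_n \in C_n$.

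Next, I use the condition $|\supp(\ub)| < n$: after a permutation, assume $u_n = 0$. Averaging over the stabilizer subgroup $\Sym(n-1) \subset \Sym(n)$ that fixes the index $n$ gives
\[
\sum_{\sigma \in \Sym(n-1)} \sigma(\ub) = (n-2)!\, s(\ub)\,(\eb_1 + \cdots + \eb_{n-1}),
\]
so $\eb_1 + \cdots + \eb_{n-1} = \epb_n - \eb_n \in C_n$. Applying the same reasoning to $\vb$ yields $-(\epb_n - \eb_n) \in C_n$, and by $\Sym(n)$-invariance we obtain $\pm(\epb_n - \eb_i) \in C_n$ for every $i \in [n]$. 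Combining these with $\pm \epb_n$ gives
\[
\eb_i = \epb_n - (\epb_n - \eb_i) \in C_n
\qquad \text{and} \qquad
-\eb_i = -\epb_n + (\epb_n - \eb_i) \in C_n
\]
for all $i$, whence $C_n = \RR^n$.

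The main subtlety — and the reason the result diverges from the global version — is that without the support-size restriction the argument can only deliver $\pm \epb_n$, which is precisely the content of the exceptional cone $\mathfrak{D}_5 = \RR \epb_n$ appearing in \Cref{p:non-pointed-local-cones}. The hypothesis $|\supp(\ub)|, |\supp(\vb)| < n$ is exactly what allows one to move from averaging over $\Sym(n)$ to averaging over a point-stabilizer $\Sym(n-1)$, thereby producing a vector with a genuine zero coordinate; this second averaging is the key new ingredient compared to the global lemma.
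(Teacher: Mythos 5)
Your proof is correct and takes essentially the same route as the paper: average the $\Sym(n)$-orbit of $\ub$ (and $\vb$) to get $\pm\epb_n\in C_n$, then use the support-size hypothesis to place $\ub$ inside $\RR^{n-1}$ and average over the point-stabilizer $\Sym(n-1)$ to obtain $\pm\epb_{n-1}\in C_n$, after which $\pm\eb_i=\pm(\epb_n-\epb_{n-1})$ up to $\Sym(n)$-invariance. Your closing remark about why the support-size hypothesis is exactly what rules out the degenerate case $C_n=\RR\epb_n=\mathfrak{D}_5$ matches the role this lemma plays in \Cref{p:non-pointed-local-cones}.
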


\begin{proof}
	It suffices to prove the implication (ii)$\Rightarrow$(i). Define $l=\max\{|\supp(\ub)|,|\supp(\vb)|\}$. Following the argument in the proof of \Cref{c:entire-cone}, we obtain $\pm\epb_k\in C_n$ for all $l\le k\le n$. Thus, in particular, $\pm\epb_n,\pm\epb_{n-1}\in C_n$ since $l<n$. A similar reasoning as in the proof of \Cref{l:big-cone} then implies that $\pm \eb_i\in C_n$ for all $i\in[n]$. This yields $C_n=\RR^{n}$, as desired.
\end{proof}

The next result identifies all $\Sym(n)$-invariant vector subspaces of $\RR^n$, showing how the local scenario differs from the global one established in \Cref{p:sym-space}.

\begin{lemma}
	\label{l:sym-space-local}
	Keep the notation from \Cref{p:non-pointed-local-cones}. Then for $n\ge2$, the only nonzero $\Sym(n)$-invariant vector subspaces of $\RR^{n}$ are $\mathfrak{D}_1$,  $\mathfrak{D}_2$, and $\mathfrak{D}_5$ .
\end{lemma}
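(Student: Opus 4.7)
The plan is to mimic the proof of \Cref{p:sym-space} (the global analogue) and track carefully how the finite-dimensional setting differs. The key new feature is that the vector $\epb_n = \eb_1+\cdots+\eb_n$ is now a genuine fixed point of $\Sym(n)$, so $\RR\epb_n = \mathfrak{D}_5$ is automatically a $\Sym(n)$-invariant subspace, accounting for the extra case absent globally. I will take an arbitrary nonzero $\Sym(n)$-invariant subspace $L \subseteq \RR^n$ and show that $L$ must be one of $\mathfrak{D}_1, \mathfrak{D}_2, \mathfrak{D}_5$ by splitting on two dichotomies.

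First I would ask whether $L \subseteq \mathfrak{D}_5$. If yes, then since $\dim \mathfrak{D}_5 = 1$ and $L$ is nonzero, we immediately obtain $L = \mathfrak{D}_5$. Otherwise, $L$ contains some vector $\ub$ whose coordinates are not all equal, so $u_j \ne u_k$ for some $j,k \in [n]$. Then the very same argument as in \Cref{l:subspace}, using the transposition in $\Sym(n)$ swapping $j$ and $k$, gives $(u_j-u_k)(\eb_j-\eb_k) \in L$, hence $\eb_j - \eb_k \in L$, and by $\Sym(n)$-invariance $\mathfrak{D}_2 = \RR\Sym(n)(\eb_1-\eb_2) \subseteq L$.

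Next I split on whether $L \subseteq \mathfrak{D}_2$. If so, then $L = \mathfrak{D}_2$. If not, there exists $\vb \in L$ with $s(\vb) \ne 0$. Using that $\mathfrak{D}_2 \subseteq L$ and writing
\[
\vb - \tfrac{s(\vb)}{n}\epb_n \in \mathfrak{D}_2 \subseteq L,
\]
we deduce $\tfrac{s(\vb)}{n}\epb_n \in L$, and therefore $\epb_n \in L$. Combined with $\mathfrak{D}_2 \subseteq L$, this yields $\mathfrak{D}_2 + \RR\epb_n \subseteq L$; since every $\wb \in \RR^n$ decomposes as $\wb = (\wb - \tfrac{s(\wb)}{n}\epb_n) + \tfrac{s(\wb)}{n}\epb_n$ with the first summand in $\mathfrak{D}_2$, we conclude $L = \RR^n = \mathfrak{D}_1$.

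There is no significant obstacle here: the argument is a direct finite-dimensional adaptation of \Cref{p:sym-space}, with \Cref{l:local-big-cone} not even needed because the decomposition $\RR^n = \mathfrak{D}_2 \oplus \RR\epb_n$ is available in the local case and lets us promote $\mathfrak{D}_2 \subsetneq L$ directly to $L = \RR^n$. The only subtlety worth flagging is why the new possibility $\mathfrak{D}_5$ appears in the local classification but not the global one, which the first dichotomy makes transparent: the assumption $L \subseteq \mathfrak{D}_5$ is only interesting in finite dimensions, since no analogue of $\epb_n$ lies in $\RR^{(\NN)}$.
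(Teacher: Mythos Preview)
Your proof is correct and follows the same two-dichotomy structure as the paper (first split on $L\subseteq\mathfrak{D}_5$, then on $L\subseteq\mathfrak{D}_2$, using the transposition argument of \Cref{l:subspace} to obtain $\mathfrak{D}_2\subseteq L$). The genuine difference lies in the final step, where you must show that $\mathfrak{D}_2\subseteq L$ together with some $\vb\in L$ with $s(\vb)\neq 0$ forces $L=\RR^n$. The paper invokes \Cref{l:local-big-cone}: it first obtains $\epb_n\in L$ via the averaging argument of \Cref{c:entire-cone}, then exhibits $\wb=(\eb_1-\eb_2)+\epb_n$ with $|\supp(\wb)|<n$ and $s(\wb)>0$ to trigger that cone-theoretic lemma. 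You instead use the orthogonal decomposition $\RR^n=\mathfrak{D}_2\oplus\RR\epb_n$ directly: from $\vb-\tfrac{s(\vb)}{n}\epb_n\in\mathfrak{D}_2\subseteq L$ you extract $\epb_n\in L$, and then the decomposition immediately gives $L=\RR^n$. Your route is more elementary and self-contained, bypassing \Cref{l:local-big-cone} entirely; the paper's route has the advantage of reusing machinery already set up for the cone classification, keeping the arguments for \Cref{l:sym-space-local} and \Cref{p:non-pointed-local-cones} parallel.
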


\begin{proof}
	Let $L_n$ be a nonzero $\Sym(n)$-invariant vector subspaces of $\RR^{n}$. Suppose that $L_n\ne\mathfrak{D}_5$, i.e., there exists $(u_1,\dots,u_n)\in L_n$ with $u_i\ne u_j$ for some $i\ne j$. We claim that $L_n$ must be either $\mathfrak{D}_1$ or $\mathfrak{D}_2$. Indeed, following the argument in the proof of \Cref{l:subspace}, we obtain $\eb_1-\eb_2 \in L_n$, which yields $\mathfrak{D}_2\subseteq L_n$.  If $s(\vb)=0$ for all $\vb\in L_n$, then $L_n=\mathfrak{D}_2$. Otherwise, if $s(\vb)\ne0$ for some $\vb\in L_n$, we must show that  $L_n=\mathfrak{D}_1=\RR^n$. By \Cref{l:local-big-cone}, it suffices to find an element $\wb\in L_n$ satisfying  $|\supp(\wb)|<n$ and $s(\wb)>0$, as this would imply $-\wb\in L_n$ with $|\supp(-\wb)|<n$ and $s(-\wb)<0$. Since $\pm\vb\in L_n$ and $s(\vb)\ne0$, it follows from the proof of \Cref{c:entire-cone} that $\epb_n\in L_n$. Therefore,
	\[
	\wb\defas (\eb_1-\eb_2) + \epb_n=2\eb_1+\eb_3+\cdots+\eb_n\in L_n.
	\]
	As $|\supp(\wb)|=n-1$ and $s(\wb)=n$, we are done.
\end{proof} 

We are now prepared to prove \Cref{p:non-pointed-local-cones}.

\begin{proof}[Proof of \Cref{p:non-pointed-local-cones}]
	Suppose that $C_n\subsetneq\RR^{n}$ is a non-pointed $\Sym(n)$-invariant cone. Then $\lin(C_n)$ is either $\mathfrak{D}_2$ or $\mathfrak{D}_5$ by \Cref{l:sym-space-local}.
	
	\emph{Case 1}: $\lin(C_n)=\mathfrak{D}_2$. We claim that $C_n$ must be one of $\mathfrak{D}_2,\mathfrak{D}_3$, or $\mathfrak{D}_4$. As in the proof of \Cref{t:non-pointed-cones}, it suffices to show that $C_n$ is contained in either $\mathfrak{D}_3$ or $\mathfrak{D}_4$. If this were not the case, there would exist $\ub,\vb\in C_n$ such that $s(\ub)>0$ and $s(\vb)<0$. It then follows from the proof of \Cref{c:entire-cone} that $\pm\epb_n\in C_n$. Since $\pm(\eb_1-\eb_2)\in \mathfrak{D}_2\subseteq C_n$, we deduce that $\pm(\eb_1-\eb_2+\epb_n)\in C_n$. This together with \Cref{l:local-big-cone} implies that $C_n=\RR^{n}$, a contradiction.
	
	\emph{Case 2}: $\lin(C_n)=\mathfrak{D}_5$. We show that $C_n=\mathfrak{D}_5$. Suppose, for contradiction, that there exists $\ub=(u_1,\dots,u_n)\in C_n$ with $u_i\ne u_j$ for some $i\ne j$. By permuting the coordinates, we may assume that $u_1\le\cdots\le u_n$. Set $\vb=\ub-u_1\epb_n$ and $\wb=\ub-u_n\epb_n$. Then $\vb,\wb\in C_n$ since $\pm\epb_n\in \mathfrak{D}_5\subseteq C_n$. It is clear that $|\supp(\vb)|,|\supp(\wb)|<n$ and $s(\vb)>0,s(\wb)<0$. Applying \Cref{l:local-big-cone}, we obtain $C_n=\RR^{n}$. This contradiction completes the proof.
\end{proof}

\subsection{Non-positive symmetric normal monoids}

Building on the classifications of non-pointed symmetric cones established above, we now derive classifications for non-positive symmetric normal monoids. Let us begin with a characterization of positivity for monoids in terms of their associated cones. This slightly generalizes a classical result for affine monoids (see, e.g., \cite[Proposition 2.16]{BG}). Recall that a monoid $M\subseteq\ZZ^{(\NN)}$ is said to be positive if its group of units $\U(M)$ is trivial.

\begin{lemma}
	\label{l:positive-pointed}
	For any monoid $M\subseteq\ZZ^{(\NN)}$,  the following equality holds:
	$$\lin(\cone(M))=\RR\U(M).$$
	In particular, $M$ is positive if and only if $\cone(M)$ is pointed.
\end{lemma}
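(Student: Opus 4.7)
The plan is to prove the two inclusions separately, reducing the nontrivial direction to the case of an affine (finitely generated) submonoid.

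The inclusion $\RR\U(M) \subseteq \lin(\cone(M))$ is immediate: if $\ub \in \U(M)$, then $\pm\ub \in M \subseteq \cone(M)$, so for every $\lambda \in \RR$ both $\lambda\ub$ and $-\lambda\ub$ lie in $\cone(M)$, hence $\lambda\ub \in \lin(\cone(M))$.

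For the reverse inclusion, take $\ub \in \lin(\cone(M))$ and write
\[
\ub = \sum_{i=1}^s \lambda_i \ab_i
\qquad\text{and}\qquad
-\ub = \sum_{j=1}^t \mu_j \bb_j,
\]
with $\lambda_i, \mu_j \ge 0$ and $\ab_i, \bb_j \in M$. Let $M'$ be the affine submonoid of $M$ generated by the finite set $\{\ab_1, \dots, \ab_s, \bb_1, \dots, \bb_t\}$, and list its generators as $\gb_1, \dots, \gb_N$. Since $\U(M') \subseteq \U(M)$ and $\ub \in \lin(\cone(M'))$, it suffices to prove the identity for $M'$. Adding the two displayed expressions yields $\sum_k \gamma_k \gb_k = 0$ with each $\gamma_k \ge 0$. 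For every index $k$ with $\gamma_k > 0$, this relation expresses $-\gb_k$ as a nonnegative linear combination of the remaining generators, so $-\gb_k \in \cone(M')$. Collecting the indices that actually appear in the expression of $\ub$ into a set $I$, we obtain $\ub \in \sum_{k \in I} \RR\gb_k$ with $-\gb_k \in \cone(M')$ for all $k \in I$.

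It thus remains to show $\gb_k \in \RR\U(M')$ whenever $-\gb_k \in \cone(M')$. The set $\{(\nu_\ell) \in \RR_{\ge 0}^N : \sum_\ell \nu_\ell \gb_\ell = -\gb_k\}$ is a nonempty \emph{rational} polyhedron (since all $\gb_\ell$ and $-\gb_k$ are integer vectors), and therefore admits a rational point. Clearing denominators produces a positive integer $n$ with $-n\gb_k \in M'$; as $n\gb_k \in M'$ trivially, we conclude that $n\gb_k \in \U(M')$ and hence $\gb_k \in \RR\U(M')$. The ``in particular'' statement follows at once: $M$ is positive iff $\U(M) = \{\nub\}$ iff $\RR\U(M) = \{\nub\}$ iff $\lin(\cone(M)) = \{\nub\}$ iff $\cone(M)$ is pointed. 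The main subtlety lies in the rationality step, which is precisely where the integrality hypothesis $M \subseteq \ZZ^{(\NN)}$ is essential, in analogy with the classical proof in \cite[Proposition 2.16]{BG}.
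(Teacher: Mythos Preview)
Your proof is correct and follows the same overall strategy as the paper: reduce to a finitely generated submonoid $M'$ and then exploit rationality. The paper's version of the rationality step is slightly more structural: after passing to $M'\subseteq\ZZ^n$, it invokes the Minkowski--Weyl theorem to see that $\lin(\cone(M'))$ is the solution set of a rational homogeneous linear system, hence spanned by its lattice points $N=\lin(\cone(M'))\cap\ZZ^n$, and then observes that every $\vb\in N$ has a positive integer multiple in $\U(M')$. Your argument instead works directly with the chosen generators $\gb_k$, first noting that each one appearing in the expression of $\ub$ satisfies $-\gb_k\in\cone(M')$, and then using the existence of rational points in a nonempty rational polyhedron to produce an integer multiple $n\gb_k\in\U(M')$. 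Both arguments rest on the same underlying rationality principle; yours is a bit more hands-on with the specific generators, while the paper's formulation makes the role of the full lineality space (and its rational structure) more visible.
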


\begin{proof}
	It suffices to establish the first assertion. The inclusion $\RR\U(M)\subseteq\lin(\cone(M))$ is immediate. Conversely, take any $\ub\in \lin(\cone(M)).$ Since both $\ub$ and $-\ub$ lie in $\cone(M)$, they can be written as nonnegative linear combinations of finitely many elements of $M$. Thus, there exists an affine submonoid $M'\subseteq M$ such that $\pm\ub\in \cone(M')$, which implies $\ub\in \lin(\cone(M')).$ Because $\cone(M')$ is finitely generated, it lies in $\RR^n$ for some $n\ge1$. Furthermore, by the Minkowski--Weyl theorem, this rational cone can be expressed as the intersection of finitely many linear rational halfspaces in $\RR^{n}.$ Evidently, $\lin(\cone(M'))$ is the intersection of the hyperplanes defining these halfspaces, i.e., it is the solution space of a system of linear equations with integer coefficients. It follows that $\lin(\cone(M'))$ is spanned by $N\defas\lin(\cone(M'))\cap \ZZ^{n}$. For any $\vb\in N$, there exists $k\in\NN$ such that $\pm k\vb\in M'$, which implies $\vb\in \RR\U(M')$. Hence, 
	$
	N\subseteq \RR\U(M').
	$
	Therefore,
	$$\ub\in \lin(\cone(M'))=\RR N\subseteq \RR\U(M')\subseteq \RR\U(M).$$
	This shows that $\lin(\cone(M))\subseteq \RR\U(M)$, completing the proof.
\end{proof}

Combining the preceding lemma with \Cref{t:non-pointed-cones}, we obtain the following classification of all non-positive $\Sym$-invariant normal monoids. 

\begin{corollary}
	\label{c:non-positve-monoids}
	There are exactly four non-positive $\Sym$-invariant normal monoids in $\ZZ^{(\NN)}$:
	\begin{align*}
		\mathfrak{M}_1&=\ZZ^{(\NN)},\\
		\mathfrak{M}_2&=\{\ub\in\ZZ^{(\NN)}\mid s(\ub)=0\}=\ZZ\Sym(\eb_1-\eb_2),\\
		\mathfrak{M}_3&=\{\ub\in\ZZ^{(\NN)}\mid s(\ub)\ge0\}
		=\ZZ_{\ge0}^{(\NN)}+\ZZ\Sym(\eb_1-\eb_2),\\
		\mathfrak{M}_4&=\{\ub\in\ZZ^{(\NN)}\mid s(\ub)\le0\}
		=\ZZ_{\le0}^{(\NN)}+\ZZ\Sym(\eb_1-\eb_2).
	\end{align*}
	In particular, every non-positive $\Sym$-invariant normal monoid in $\ZZ^{(\NN)}$ is $\Sym$-equivariantly finitely generated.
\end{corollary}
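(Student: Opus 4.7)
The plan is to bootstrap directly off Theorem~\ref{t:non-pointed-cones} by combining two elementary facts: (a) a normal monoid $M\subseteq\ZZ^{(\NN)}$ satisfies $M=\cone(M)\cap\ZZ^{(\NN)}$; and (b) by Lemma~\ref{l:positive-pointed}, $M$ is non-positive if and only if $\cone(M)$ is non-pointed. Thus let $M$ be any non-positive $\Sym$-invariant normal monoid. Its cone $\cone(M)$ is automatically $\Sym$-invariant, and non-pointed by (b), so Theorem~\ref{t:non-pointed-cones} pins it down to one of $\mathfrak{C}_1,\dots,\mathfrak{C}_4$; then (a) gives $M=\cone(M)\cap\ZZ^{(\NN)}\in\{\mathfrak{M}_1,\dots,\mathfrak{M}_4\}$. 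Conversely each $\mathfrak{M}_i$ is visibly a $\Sym$-invariant normal monoid (it is the set of lattice points of a $\Sym$-invariant rational cone), and each contains $\pm(\eb_1-\eb_2)$, so is non-positive. This shows the list is both exhaustive and non-redundant.

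Next I would verify the two alternative descriptions of each $\mathfrak{M}_i$. The level-set description follows immediately from $\mathfrak{M}_i=\mathfrak{C}_i\cap\ZZ^{(\NN)}$ using the second description of $\mathfrak{C}_i$ in Theorem~\ref{t:non-pointed-cones}. For the orbit description, the inclusion $\supseteq$ is trivial; for the reverse inclusion, use the identity
\[
\ub=s(\ub)\eb_1+\sum_{i\in\NN}u_i(\eb_i-\eb_1)
\]
valid for any $\ub=(u_i)_{i\in\NN}\in\ZZ^{(\NN)}$, together with the observation that each $\eb_i-\eb_1$ lies in the $\Sym$-orbit of $\eb_2-\eb_1=-(\eb_1-\eb_2)$. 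All coefficients are integers, so this yields the required $\ZZ$-linear expressions.

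For the final assertion on equivariant finite generation, the same decomposition supplies explicit finite generating sets mirroring the cone case: $\{\pm\eb_1\}$, $\{\pm(\eb_1-\eb_2)\}$, $\{\eb_1,\pm(\eb_1-\eb_2)\}$, and $\{-\eb_1,\pm(\eb_1-\eb_2)\}$ equivariantly generate $\mathfrak{M}_1,\mathfrak{M}_2,\mathfrak{M}_3,\mathfrak{M}_4$ as monoids, respectively. Indeed, in the decomposition above each summand $u_i(\eb_i-\eb_1)$ is a nonnegative integer multiple of some $\sigma(\eb_1-\eb_2)$ or $\sigma(\eb_2-\eb_1)$ depending on the sign of $u_i$, and the leading term $s(\ub)\eb_1$ contributes a $\ZZ_{\ge0}$-multiple of $\eb_1$ or $-\eb_1$ (or vanishes) as required by the sign constraint on $s(\ub)$ defining each $\mathfrak{M}_i$.

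The main potential obstacle is bookkeeping rather than ideas: one must be careful that, while the cone description uses real coefficients, the monoid description requires $\ZZ_{\ge0}$-combinations, so signs in the decomposition must be absorbed by permuting $\eb_1-\eb_2$ into $\eb_2-\eb_1$ rather than by negating coefficients. Once this is noted, the corollary reduces to a direct translation of Theorem~\ref{t:non-pointed-cones} via Lemma~\ref{l:positive-pointed}, and no further machinery is needed.
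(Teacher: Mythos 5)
Your proposal is correct and takes essentially the same route as the paper: reduce via normality to $M=\cone(M)\cap\ZZ^{(\NN)}$, invoke Lemma~\ref{l:positive-pointed} to pass to a non-pointed cone, and apply Theorem~\ref{t:non-pointed-cones}. You additionally spell out the equivalence of the two descriptions and the explicit monoid generating sets, which the paper leaves implicit by analogy with the cone case; your sign bookkeeping for absorbing negative coefficients into the orbit of $\eb_2-\eb_1$ is correct.
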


\begin{proof}
	As in the proof of \Cref{t:non-pointed-cones}, it suffices to establish the first assertion.
	Using the notation from \Cref{t:non-pointed-cones}, we note that $\mathfrak{C}_i\cap \ZZ^{(\NN)}=\mathfrak{M}_i$ for all $i\in[4]$. Hence, $\mathfrak{M}_i$ is a non-positive $\Sym$-invariant normal monoid for all $i\in[4]$. Conversely, let $M$ be any non-positive $\Sym$-invariant normal monoid in $\ZZ^{(\NN)}$. Then the normality of $M$ implies that $M=\cone(M)\cap\ZZ^{(\NN)}$. According to \Cref{l:positive-pointed}, $\cone(M)$ is a non-pointed cone. Therefore, by \Cref{t:non-pointed-cones}, $\cone(M)=\mathfrak{C}_i$ for some $i\in[4]$. Since $\mathfrak{C}_i\cap \ZZ^{(\NN)}=\mathfrak{M}_i$, we conclude that $M=\mathfrak{M}_i$ for some $i\in[4]$, as required.
\end{proof}

As a discrete analogue of \Cref{p:non-pointed-local-cones}, the following result classifies all non-positive $\Sym(n)$-invariant normal monoids in $\ZZ^n$. Since its proof parallels that of \Cref{c:non-positve-monoids}, we omit the details.

\begin{corollary}
	\label{c:non-positive-local-monoids}
	For $n\ge 2$, there are exactly five non-positive $\Sym(n)$-invariant normal monoids in $\ZZ^{n}$:
	\begin{align*}
		\mathfrak{N}_1&=\ZZ^{n},\\
		\mathfrak{N}_2&=\{\ub\in\ZZ^{n}\mid s(\ub)=0\}=\ZZ\Sym(n)(\eb_1-\eb_2),\\
		\mathfrak{N}_3&=\{\ub\in\ZZ^{n}\mid s(\ub)\ge0\}
		=\ZZ_{\ge0}^{n}+\ZZ\Sym(n)(\eb_1-\eb_2),\\
		\mathfrak{N}_4&=\{\ub\in\ZZ^{n}\mid s(\ub)\le0\}
		=\ZZ_{\le0}^{n}+\ZZ\Sym(n)(\eb_1-\eb_2),\\
		\mathfrak{N}_5&=\{(a,\dots,a)\in\ZZ^{n}\mid a\in\ZZ\}
		=\ZZ\epb_n.
	\end{align*}
\end{corollary}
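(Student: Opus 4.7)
The plan is to mirror the proof of \Cref{c:non-positve-monoids} almost verbatim, substituting the local classification of non-pointed cones (\Cref{p:non-pointed-local-cones}) for the global one (\Cref{t:non-pointed-cones}). First I would dispense with the easy direction by checking that each $\mathfrak{N}_i$ is indeed a non-positive $\Sym(n)$-invariant normal monoid. The key point is the set-theoretic identity $\mathfrak{D}_i \cap \ZZ^n = \mathfrak{N}_i$ for each $i \in [5]$; this exhibits $\mathfrak{N}_i$ as the lattice points of a cone and thereby as a normal monoid, while $\Sym(n)$-invariance is manifest from the defining descriptions. Non-positivity is visible from the nontrivial units, namely $\pm(\eb_1 - \eb_2) \in \U(\mathfrak{N}_i)$ for $i \in [4]$ and $\pm\epb_n \in \U(\mathfrak{N}_5)$.

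For the converse, let $N \subseteq \ZZ^n$ be any non-positive $\Sym(n)$-invariant normal monoid. Normality supplies the crucial identity $N = \cone(N) \cap \ZZ^n$. Since $N$ is non-positive, \Cref{l:positive-pointed} forces $\cone(N)$ to have nonzero lineality space, so it is a non-pointed $\Sym(n)$-invariant cone in $\RR^n$. \Cref{p:non-pointed-local-cones} then pins $\cone(N)$ down to one of the five cones $\mathfrak{D}_1, \ldots, \mathfrak{D}_5$. Intersecting with $\ZZ^n$ and applying the identity $\mathfrak{D}_i \cap \ZZ^n = \mathfrak{N}_i$ from the first paragraph yields $N = \mathfrak{N}_i$, completing the classification.

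I do not expect any substantial obstacle; the argument is a routine discrete shadow of \Cref{p:non-pointed-local-cones}, with the bridge provided by \Cref{l:positive-pointed}. The only identification that warrants a moment's verification is $\mathfrak{D}_5 \cap \ZZ^n = \mathfrak{N}_5$, which holds because $a \epb_n \in \ZZ^n$ with $\epb_n = \eb_1 + \cdots + \eb_n$ forces $a \in \ZZ$. The remaining identities follow directly from the sum-of-coordinates descriptions of $\mathfrak{D}_2, \mathfrak{D}_3, \mathfrak{D}_4$ together with the observation that integrality of all coordinates is exactly the condition cutting out $\mathfrak{N}_i$ from $\mathfrak{D}_i$.
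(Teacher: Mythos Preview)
Your proposal is correct and is exactly the argument the paper has in mind: the paper explicitly omits the proof, saying it ``parallels that of \Cref{c:non-positve-monoids},'' and your plan carries out precisely that parallel, replacing \Cref{t:non-pointed-cones} by \Cref{p:non-pointed-local-cones} and verifying $\mathfrak{D}_i \cap \ZZ^n = \mathfrak{N}_i$ for $i\in[5]$.
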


We conclude this section with the following problem, which arises naturally from the preceding results.

\begin{problem}
	\label{pb:non-normal-monoids}
	Extend \Cref{c:non-positve-monoids,c:non-positive-local-monoids} to monoids that are not necessarily normal. 
\end{problem}


\section{Equivariant Gordan's lemma}
\label{sec:equi-Gordan}

Having established the necessary groundwork in the preceding sections, we are now in a position to prove the following equivariant analogue of Gordan's lemma. This result, which generalizes \cite[Theorem 6.1]{LR21}, completely resolves \Cref{cj:equi-Gordan}.

\begin{theorem}
	\label{t:Gordan}
	Let $C\subseteq\RR^{(\NN)}$ be a $\Sym$-equivariantly finitely generated rational cone. Then $M=C\cap\ZZ^{(\NN)}$ is a $\Sym$-equivariantly finitely generated normal monoid.
\end{theorem}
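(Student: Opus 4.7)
The plan is to combine the local-global principle for positive normal monoids (\Cref{c:finte-generation-monoid}) with the classification of non-positive $\Sym$-invariant normal monoids (\Cref{c:non-positve-monoids}). Since $M=C\cap\ZZ^{(\NN)}$ is plainly a $\Sym$-invariant normal monoid, I would split the analysis according to whether $M$ is positive. If $M$ is not positive, \Cref{c:non-positve-monoids} identifies $M$ as one of $\mathfrak{M}_1,\dots,\mathfrak{M}_4$, each of which is $\Sym$-equivariantly finitely generated, settling this case at once.

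In the positive case, I would pass to the associated local chain $\M=(M_n)_{n\ge 1}$ given by $M_n\defas C_n\cap\ZZ^n$, where $C_n\defas C\cap\RR^n$. Applying \Cref{t:finte-generation} to $C$, the chain $\C=(C_n)_{n\ge 1}$ stabilizes and satisfies $C\cap\RR^n=C_n$ for all sufficiently large $n$. Moreover, every $C_n$ is a finitely generated rational cone: beyond the stability index this is immediate, and for smaller $n$ one intersects a stabilized polyhedral $C_N$ with the rational subspace $\RR^n$. Hence each $M_n$ is a positive normal affine monoid by the classical Gordan's lemma; positivity uses \Cref{l:positive-pointed} together with the fact that $\cone(M)=C$ (itself a consequence of $C$ being rationally generated and $\Sym$-invariant). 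To invoke \Cref{c:finte-generation-monoid}, I would verify its condition (iv) for the chain $\M$. Part (a), $M\cap\ZZ^n=M_n$ for large $n$, follows at once from $C\cap\RR^n=C_n$. For part (b) I would lift the analogous statement for the cone chain, namely condition (iii)(b) of \Cref{t:stabilizing-cones}, which is available since $\C$ stabilizes: given $(u_1,\dots,u_{n+1})\in M_{n+1}\subseteq C_{n+1}$, I obtain $\sigma\in\Sym(n+1)$ with $u_{\sigma(n)}u_{\sigma(n+1)}\ge 0$ and the merged vector $(u_{\sigma(1)},\dots,u_{\sigma(n-1)},u_{\sigma(n)}+u_{\sigma(n+1)})\in C_n$; since the input is integral, the merged vector is also integral and thus lies in $M_n=C_n\cap\ZZ^n$. \Cref{c:finte-generation-monoid} then delivers equivariant finite generation of $M$.

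The main obstacle I anticipate is less computational than architectural: it is the need to separately dispose of the non-positive case, since \Cref{c:finte-generation-monoid} requires positivity in its hypotheses. This is precisely why the classification in \Cref{c:non-positve-monoids}, together with the pointedness criterion of \Cref{l:positive-pointed}, is essential here: it isolates the only cases where $\cone(M)$ fails to be pointed and therefore rules out the only obstruction to applying the local-global principle. Once positivity is secured, the transfer of the stabilization data from the cone chain $\C$ to the monoid chain $\M$ is essentially automatic, relying only on the trivial observation that coordinate sums preserve integrality.
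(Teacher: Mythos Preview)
Your proposal is correct and follows essentially the same route as the paper: split into the non-positive case (handled by \Cref{c:non-positve-monoids}) and the positive case (handled by feeding the stabilization data of the cone chain, via \Cref{t:stabilizing-cones}(iii)(b), into condition (iv) of \Cref{c:finte-generation-monoid}). The only cosmetic difference is that the paper builds its local chain from a chosen generating set, $C_n=\cone(\Sym(n)(G))$, whereas you use the truncation chain $C_n=C\cap\RR^n$; both work, and your observation that integrality is preserved under merging two same-sign coordinates is exactly what makes the transfer from cones to monoids automatic.
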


\begin{proof}
	If $M$ is non-positive, then it is $\Sym$-equivariantly finitely generated by  \Cref{c:non-positve-monoids}. Assume now that $M$ is positive. Let $G\subseteq\ZZ^{r}$ for some $r\ge1$ be a finite $\Sym$-equivariant generating set for $C$. Consider the $\Sym$-invariant chain of cones  $\C=(C_{n})_{n\geq 1}$ defined by
	\[
	C_{n}=
	\begin{cases}
	\{\nub\}&\text{if }  n<r,\\
	\cone(\Sym(n)(G))&\text{if } n\ge r.
	\end{cases}
	\]
	It is evident that $\C$ stabilizes with direct limit $C$, and each $C_n$ is a finitely generated rational cone. Now consider the associated $\Sym$-invariant chain of monoids $\M=(M_{n})_{n\geq 1}$, where $M_n=C_n\cap\ZZ^{n}$ for all $n\ge1$. Clearly, $M$ is the direct limit of this chain. Since $M$ is positive, each $M_n$ is also a positive monoid. Furthermore, by the classical Gordan's lemma, each $M_n$ is an affine normal monoid. Because the chain $\C$ stabilizes, \Cref{t:stabilizing-cones} ensures that statement (iii) of that theorem holds for $C$ and $\C$. In turn, this implies that statement (iv) of \Cref{c:finte-generation-monoid} holds for $M$ and $\M$. Therefore, by \Cref{c:finte-generation-monoid}, $M$ is $\Sym$-equivariantly finitely generated. 
\end{proof}

Given a $\Sym$-equivariantly finitely generated rational cone $C\subseteq\RR^{(\NN)}$, it is natural to ask how to find a finite $\Sym$-equivariant generating set for the monoid $M=C\cap\ZZ^{(\NN)}$. If $M$ is non-positive, then \Cref{c:non-positve-monoids} provides an explicit description of $M$, making it straightforward to produce such a generating set. When $M$ is positive, a finite $\Sym$-equivariant generating set for $M$ is implicitly given in the proofs of  \Cref{t:stabilizing-cones,c:finte-generation-monoid}. We now make this set explicit in the following result.

\begin{proposition}
	\label{p:equi-gen-monoid}
	Let $C\subseteq\RR^{(\NN)}$ be a pointed $\Sym$-invariant rational cone that admits a finite $\Sym$-equivariant generating set $G\subseteq\ZZ^r$ for some $r\ge1$. Consider the chain $\C=(C_{n})_{n\geq 1}$, where $C_{n}=\{\nub\}$ if $n<r$ and
	\[
	C_{n}=\cone(\Sym(n)(G))\quad \text{if }\ n\ge r.
	\]
	Let $M_n=C_n\cap\ZZ^{n}$ for $n\ge1$, and denote the Hilbert basis of $M_n$ by $\Hc_{n}$. Set 
	$$q=\max\{3r^2,\|\Hc_{3r^2}\|\}.$$
	Then $\Hc_{q}$ is a $\Sym$-equivariant generating set for the monoid $M=C\cap\ZZ^{(\NN)}$. 
\end{proposition}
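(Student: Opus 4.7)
The plan is to trace the argument already used in the proof of \Cref{t:Gordan} while carefully tracking the stability indices, so that the explicit set $\Hc_q$ emerges. Since $G \subseteq \ZZ^r$ is a finite $\Sym$-equivariant generating set, the chain $\C$ stabilizes with stability index at most $r$. I would then apply the implication (ii)$\Rightarrow$(iii) of \Cref{t:stabilizing-cones} with $p = r$, obtaining that $C \cap \RR^n = C_n$ together with the merge condition in statement (iii)(b) for all $n \geq 3r^2$.

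Next I would pass from cones to monoids. Since $C$ is pointed, \Cref{l:positive-pointed} gives that $M$, and hence each submonoid $M_n$, is positive; combined with the classical Gordan's lemma applied to each finitely generated rational cone $C_n$, this ensures that every $M_n$ is a positive normal affine monoid. Intersecting $C \cap \RR^n = C_n$ with $\ZZ^n$ yields $M \cap \ZZ^n = M_n$ for all $n \geq 3r^2$, while applying the cone merge condition to an integer vector in $M_{n+1}$ produces an integer vector lying in $C_n \cap \ZZ^n = M_n$. Thus condition (iv) of \Cref{c:finte-generation-monoid} holds for $M$ and $\M$ with $p = 3r^2$.

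Invoking the implication (iv)$\Rightarrow$(iii) of \Cref{c:finte-generation-monoid} then gives $\|\Hc_m\| \leq \|\Hc_n\|$ for all $m \geq n \geq 3r^2$, so in particular $\|\Hc_n\| \leq \|\Hc_{3r^2}\| \leq q$ for every $n \geq 3r^2$. For any $n \geq q$ and any $\ub \in \Hc_n$, the integrality of $\ub$ gives $|\supp(\ub)| \leq \|\ub\| \leq \|\Hc_n\| \leq q$, so after permuting coordinates by some $\sigma \in \Sym(n)$ the vector $\sigma(\ub)$ lies in $M \cap \ZZ^q = M_q = \mn(\Hc_q)$. Since $\Hc_n$ generates $M_n$, this forces $M_n = \mn(\Sym(n)(\Hc_q))$ for all $n \geq q$, and passing to the direct limit $M = \bigcup_{n \geq q} M_n$ concludes that $\Hc_q$ is a $\Sym$-equivariant generating set for $M$.

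The main obstacle I anticipate is the faithful translation of the cone merge condition in \Cref{t:stabilizing-cones}(iii)(b) into its monoid counterpart in \Cref{c:finte-generation-monoid}(iv)(b): one must verify that summing two coordinates of an integer vector preserves integrality and that the merged vector, already known to lie in $C_n$, therefore lies in $M_n = C_n \cap \ZZ^n$. Once this bridge is in place, the remainder is a bookkeeping exercise threading the constants $3r^2$ and $\|\Hc_{3r^2}\|$ through the local-global principles established in \Cref{section-local-global}.
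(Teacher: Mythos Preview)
Your proposal is correct and follows essentially the same approach as the paper: trace the explicit constants through (ii)$\Rightarrow$(iii) of \Cref{t:stabilizing-cones} with $p=r$ to get both conditions at level $3r^2$, transfer them to the monoid chain to verify condition (iv) of \Cref{c:finte-generation-monoid}, then run (iv)$\Rightarrow$(iii)$\Rightarrow$(v) of that corollary to bound support sizes by $q$ and conclude. The only cosmetic difference is that the paper also notes that $\sigma(\ub)$ is irreducible in $M_q$ (hence lies in $\Hc_q$ itself), giving the slightly sharper inclusion $\Hc_n\subseteq\Sym(n)(\Hc_q)$, but your weaker observation $\sigma(\ub)\in M_q=\mn(\Hc_q)$ already suffices for the stated conclusion.
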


\begin{proof}
Since $G\subseteq M$, it is clear that $C=\cone(M)$. Thus, by  \Cref{l:positive-pointed}, the monoid $M$ is positive. Arguing as in the proof of 	\Cref{t:Gordan}, we deduce that $\M=(M_{n})_{n\geq 1}$ is a $\Sym$-invariant chain of positive affine normal monoids whose direct limit is $M$.
By construction, $G$ is a $\Sym(n)$-equivariant generating set for $C_n$ for all $n\ge r$. Hence, from the proof of the implication (ii)$\Rightarrow$(iii) in \Cref{t:stabilizing-cones}, it follows  that both conditions in statement (iii) of that theorem hold for $C$ and $\C$ when $n\ge p\defas3r^2$. Consequently, both conditions in statement (iv) of \Cref{c:finte-generation-monoid} are satisfied for $M$ and $\M$ when $n\ge p$. Now, invoking the proof of the implications (iv)$\Rightarrow$(iii)$\Rightarrow$(v) in \Cref{t:stabilizing-cones}, we conclude that for all $n\ge q=\max\{p,\|\Hc_{p}\|\}$, every element $\ub\in\Hc_n$ has support size at most $q$. In particular, there exists $\sigma\in\Sym(n)$ such that
\[
\sigma(\ub)\in M\cap\RR^q=M_q.
\]
Since $\ub$ is irreducible in $M_n$, it follows that $\sigma(\ub)$ is irreducible in $M_q$, i.e., $\sigma(\ub)\in\Hc_q$. Therefore,  $\Hc_n\subseteq\Sym(n)(\Hc_q)$ for all $n\ge q$, which implies that $\Hc_{q}$ is a $\Sym$-equivariant generating set for $M$.
\end{proof}

\enlargethispage{\baselineskip}

The following local counterpart of \Cref{p:equi-gen-monoid} follows directly from that proposition and its proof.

\begin{corollary}
	\label{c:stability-index-monoid}
	Let $\C=(C_{n})_{n\geq 1}$ be a stabilizing $\Sym$-invariant chain of pointed, finitely generated rational cones with stability index $r$. Let $\M=(M_{n})_{n\geq 1}$ be the associated chain of monoids, where  $M_n=C_n\cap\ZZ^{n}$ for $n\ge1$. Denote the Hilbert basis of each $M_n$ by $\Hc_{n}$. Then the chain $\M$ stabilizes with stability index at most $q\defas\max\{3r^2,\|\Hc_{3r^2}\|\}$.
\end{corollary}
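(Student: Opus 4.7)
The plan is to reduce the corollary to Proposition \ref{p:equi-gen-monoid} by picking a suitable finite equivariant generating set and then invoking the chain of implications already assembled in Theorem \ref{t:stabilizing-cones} and Corollary \ref{c:finte-generation-monoid}. First, since $C_r$ is a finitely generated rational cone, I would choose a finite set $G\subseteq \ZZ^r$ generating $C_r$. Because $\C$ stabilizes with index $r$, the equality $C_n=\cone(\Sym(n)(G))$ holds for every $n\ge r$, so the chain $\C$ matches the hypothesis of Proposition \ref{p:equi-gen-monoid}. Pointedness of each $C_n$ combined with Lemma \ref{l:positive-pointed} and the classical Gordan's lemma shows that each $M_n$ is a positive affine normal monoid; hence $\M$ is a $\Sym$-invariant chain of positive normal affine monoids, the limit $M$ of which is $C\cap \ZZ^{(\NN)}$.

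Next I would run through the proof of Proposition \ref{p:equi-gen-monoid}. Its key ingredient is the implication (ii)$\Rightarrow$(iii) in Theorem \ref{t:stabilizing-cones}, which yields both $C\cap \RR^n=C_n$ and a support-size bound on cone generators for every $n\ge p\defas 3r^2$. These two conditions are precisely statement (iv) of Corollary \ref{c:finte-generation-monoid} applied to $\M$, so the full chain of equivalences in that corollary is available. In particular, the proof of (iv)$\Rightarrow$(iii)$\Rightarrow$(v) shows that for every $n\ge q\defas\max\{3r^2,\|\Hc_{3r^2}\|\}$, every element of $\Hc_n$ has support size at most $q$, and moreover $M\cap \ZZ^n=M_n$ for all such $n$.

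The remaining step is to extract stabilization of $\M$ with index at most $q$ from this support-size control. I would fix $n\ge m\ge q$ and $\ub\in \Hc_n$. Because $|\supp(\ub)|\le q\le m$, some $\sigma\in\Sym(n)$ sends $\ub$ into $\RR^m$; then
\[
\sigma(\ub)\in M_n\cap \RR^m\subseteq M\cap \RR^m=M_m,
\]
using condition (v)(a) of Corollary \ref{c:finte-generation-monoid} for $m\ge q$. Hence $\Hc_n\subseteq \Sym(n)(M_m)$, and since $\Hc_n$ generates $M_n$, this yields $M_n\subseteq \mn(\Sym(n)(M_m))$. The reverse inclusion $\mn(\Sym(n)(M_m))\subseteq M_n$ is immediate from $M_m\subseteq M_n$ and the $\Sym(n)$-invariance of $M_n$. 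Consequently, $M_n=\mn(\Sym(n)(M_m))$ for all $n\ge m\ge q$, so $\ind(\M)\le q$.

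There is no serious new obstacle: the entire argument is a bookkeeping exercise that combines Theorem \ref{t:stabilizing-cones} (to pass from the stabilization of $\C$ to the support-size bound), Corollary \ref{c:finte-generation-monoid} (to lift this bound to the monoid Hilbert bases), and Lemma \ref{l:positive-pointed} (to ensure positivity so that Hilbert bases are available at all). The one place requiring care is keeping the constants consistent, specifically verifying that the value $p=3r^2$ produced by the implication (ii)$\Rightarrow$(iii) in Theorem \ref{t:stabilizing-cones} coincides with the threshold appearing inside the definition of $q$; this is precisely the content of Lemma \ref{l:support-reduction} being applied with generators of support size at most $r$.
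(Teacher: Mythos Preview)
Your proposal is correct and follows essentially the same route as the paper: the paper simply states that this corollary ``follows directly from [Proposition~\ref{p:equi-gen-monoid}] and its proof,'' and you have accurately unpacked that reduction, tracing the constants through the implications (ii)$\Rightarrow$(iii) of Theorem~\ref{t:stabilizing-cones} and (iv)$\Rightarrow$(iii)$\Rightarrow$(v) of Corollary~\ref{c:finte-generation-monoid} to extract the bound $\ind(\M)\le q$. The only cosmetic point is that in your final displayed inclusion you should write $M\cap\ZZ^m$ rather than $M\cap\RR^m$ to match condition (v)(a) of Corollary~\ref{c:finte-generation-monoid} (though of course $M\subseteq\ZZ^{(\NN)}$ makes these equal).
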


As the next example demonstrates, the bound for the stabilization of equivariant Hilbert bases given in \Cref{p:equi-gen-monoid,c:stability-index-monoid}, while seemingly loose, is in fact sharp.

\begin{example}
	Consider the $\Sym$-invariant cone $C$ that is $\Sym$-equivariantly generated by $G=\{(-1,a)\}$, where $a\ge2$ is an integer. Let $\C=(C_{n})_{n\geq 1}$ and $\M=(M_{n})_{n\geq 1}$ denote the associated chains of cones and monoids, respectively, as defined in \Cref{p:equi-gen-monoid}. Then the chain $\C$ stabilizes with stability index $r=2$. For $n \ge 2$, it can be shown that the set
	\[
	\Hc_n=\{\eb_1\}\cup
	\Big\{(-1,b_1,\dots,b_{n-1})\mid b_i\in\ZZ_{\ge0},\ \sum_{i=1}^{n-1}b_i=a\Big\}
	\]
	is a $\Sym(n)$-equivariant Hilbert basis for $M_n$. In particular, $\ub\defas(-1,1^{(a)})\in\Hc_{a+1}$. Since $\ub$ is irreducible in $M_{a+1}$ and has support size $a+1$, it is easily seen that
	\[
	\ub\in M_{a+1}\setminus \mn(\Sym(a+1)(M_a)).
	\]
	Thus, $\ind(\M)\ge a+1$, where $\ind(\M)$ denotes the stability index of $\M$. On the other hand, since every element of $\Hc_n$ has support size at most $a+1$, we have $\Hc_n\subseteq\Sym(n)(\Hc_{a+1})$ for all $n\ge a+1$. Hence, $\ind(\M)= a+1$, and $\Hc_{a+1}$ is a $\Sym$-equivariant generating set for the monoid $M=C\cap\ZZ^{(\NN)}$. 
	Now, since $\|\Hc_{n}\|=a+1$ for all $n\ge2$, \Cref{c:stability-index-monoid} yields
	\[
	\ind(\M)\le q\defas\max\{3r^2,\|\Hc_{3r^2}\|\}
	 =\max\{12,a+1\}.
	\]
	Therefore, if $a\ge11$, then $\ind(\M)=a+1=q$, and $q$ is the minimal index for which $\Hc_{q}$ is a $\Sym$-equivariant generating set for $M$.
	
%
%
\end{example}

	
	\section{Further open problems}
	\label{sec:problems}
	
	We propose here two more open problems concerning dual monoids and dual lattices that are inspired by the equivariant analogues of the Minkowski--Weyl theorem established in \Cref{sec-Minkowski-Weyl}.
	
	In analogy with cones, the \emph{dual monoid} of a monoid $M\subseteq\ZZ^{(\NN)}$ is defined as
	\[
	M^*=\{\vb\in \ZZ^{\NN}\mid \langle \ub,\vb\rangle\ge 0\
	\text{ for all }\ \ub\in M\},
	\]
	and for a fixed $n\in\NN$, the dual of a monoid $M_n\subseteq\ZZ^n$ is similarly given by
	\[
		M_n^*=\{\vb\in \ZZ^{n}\mid \langle \ub,\vb\rangle\ge 0\
		\text{ for all }\ \ub\in M_n\}.
	\]
	
	In light of \Cref{t:Minkowski-Weyl,c:global-dual-cone}, the following problem naturally arises.
	
	\begin{problem}
		\label{pb:dual-monoids}
		Let $\M=(M_{n})_{n\geq 1}$ be a $\Sym$-invariant chain of monoids with direct limit $M=\bigcup_{n\geq 1}M_n.$ Describe the global dual monoid $M^*$ and the chain $\M^*=(M_{n}^*)_{n\geq 1}$ of local dual monoids.
	\end{problem}
	
	A solution to this problem may provide valuable insight into the local-global behavior of monoids (see \Cref{pb:local-global-monoids}), given the critical role played by \Cref{t:Minkowski-Weyl} in the proofs of \Cref{t:finte-generation,t:stabilizing-cones}.
	
	Recall that a \emph{lattice} $L\subseteq\RR^{(\NN)}$ is a free abelian subgroup of $\RR^{(\NN)}$. Its \emph{dual lattice} is defined as
	\[
	L^*=\{\vb\in \RR^{\NN}\mid \langle \ub,\vb\rangle\in\ZZ\
	\text{ for all }\ \ub\in L\}.
	\]
	Similarly, for a fixed $n\in\NN$, the dual of a lattice $L_n\subseteq\RR^n$ is given by
	\[
	L_n^*=\{\vb\in \RR^{n}\mid \langle \ub,\vb\rangle\in\ZZ\
	\text{ for all }\ \ub\in L_n\}.
	\]
	Motivated by finiteness questions concerning Markov bases in algebraic statistics, a general framework for analyzing symmetric lattices has been developed in \cite{LR24}, with a particular focus on their equivariant generating sets, equivariant Markov bases, equivariant Gr\"{o}bner bases, and equivariant Graver bases. Within this context, \Cref{t:Minkowski-Weyl,c:global-dual-cone} suggest the following problem.
	
	\begin{problem}
		\label{pb:dual-lattices}
		Let $\L=(L_{n})_{n\geq 1}$ be a $\Sym$-invariant chain of lattices with limit $L=\bigcup_{n\geq 1}L_n.$ Describe the global dual lattice $L^*$ and the chain  of local dual lattices $\L^*=(L_{n}^*)_{n\geq 1}$. In particular, determine whether $L^*$ and the $L_n^*$ admit finite equivariant generating sets, equivariant Markov bases, equivariant Gr\"{o}bner bases, and equivariant Graver bases, when they exist.
	\end{problem}
	

\section*{Acknowledgement} 

The author is deeply grateful to Tim R\"{o}mer for many inspiring  discussions over the years. Part of this work was done during a visit to the Vietnam Institute for Advanced Study in Mathematics (VIASM), whose hospitality and support are warmly acknowledged.

\end{document}